\newtheorem{theorem}{Theorem}[section]
\newtheorem{corollary}[theorem]{Corollary}
\newtheorem{lemma}[theorem]{Lemma}
\newtheorem{proposition}[theorem]{Proposition}
\newtheorem{remark}[theorem]{Remark}
\newcommand{\N}{\mathbb N}
\newcommand{\Z}{\mathbb Z}
\newcommand{\Q}{\mathbb Q}
\newcommand{\F}{\mathbb F}
\newcommand{\A}{\mathbb A}
\newcommand{\Pp}{\mathbb P}
\newcommand{\K}{{\sf K}}
\newcommand{\M}{{\sf M}}
\newcommand{\E}{{\sf E}}
\newcommand{\fq}{\F_{\hskip-0.7mm q}}
\newcommand{\cfq}{\overline{\F}_{\hskip-0.7mm q}}
\def\ifm#1#2{\relax \ifmmode#1\else#2\fi}
\newcommand{\klk}    {\ifm {,\ldots,} {$,\ldots,$}}
\newcommand{\plp}    {\ifm {+\cdots+} {$+\ldots+$}}
\newcommand{\om}[2]   {{#1}_0 \klk {#1}_{#2}}
\newcommand{\xo}[1]  {\ifm {\om X {#1}} {$\om X {#1}$}}
\newcommand{\xon}    {\ifm {\om X n} {$\om X n$}}
\begin{document}

\title[Rational points of singular complete intersections]{Polar varieties, Bertini's theorems and
number of points of singular complete intersections over a finite
field}

\author[A. Cafure et al.]{Antonio Cafure${}^{1,2,3}$,
        Guillermo Matera${}^{2,3}$ and
        Melina Privitelli${}^{3,4}$
}

\address{${}^{1}$Ciclo B\'asico Com\'un,
                Universidad de Buenos Aires,
                Ciudad Universitaria, Pabell\'on III
                (1428) Buenos Aires, Argentina}
\address{${}^{2}$Instituto del Desarrollo Humano,
                Universidad Nacional de Gene\-ral Sarmiento, J.M. Guti\'errez 1150
                (B1613GSX) Los Polvorines, Buenos Aires, Argentina}
\email{\{acafure,\,gmatera\}@ungs.edu.ar}
\address{${}^{3}$National Council of Science and Technology (CONICET),
                Ar\-gentina}
\email{mprivitelli@conicet.gov.ar}
\address{${}^{4}$Instituto de Ciencias,
                Universidad Nacional de Gene\-ral Sarmiento, J.M. Guti\'errez 1150
                (B1613GSX) Los Polvorines, Buenos Aires, Argentina}

\thanks{Research was partially supported by
grant PIP CONICET 11220090100421. {\it Corresponding Author}.
Guillermo Matera, Instituto del Desarrollo Humano, Universidad
Nacional de Gene\-ral Sarmiento, J. M. Guti\'errez 1150 (B1613GSX)
Los Polvorines, Buenos Aires, Argentina. E-mail: {\tt
gmatera@ungs.edu.ar}.}
\keywords{Varieties over finite fields \and rational points \and
singular locus \and Bertini smoothness theorem \and polar varieties
\and multihomogeneous B\'ezout theorem \and Deligne estimate \and
Hooley--Katz estimate}
\subjclass{11G25 (primary) \and 14G05 \and 14G15 \and 14B05 \and
14N05}

\date{\today}
\maketitle

%\dedicatory{}%
%\commby{}%
% ----------------------------------------------------------------
\begin{abstract}
Let $V\subset\Pp^n(\cfq)$ be a complete intersection defined over a
finite field $\fq$ of dimension $r$ and singular locus of dimension
at most $s$, and let $\pi:V\to\Pp^{s+1}(\cfq)$ be a ``generic''
linear mapping. We obtain an effective version of the Bertini
smoothness theorem concerning $\pi$, namely an explicit upper bound
of the degree of a proper Zariski closed subset of $\Pp^{s+1}(\cfq)$
which contains all the points defining singular fibers of $\pi$. For
this purpose we make essential use of the concept of polar variety
associated to the set of exceptional points of $\pi$. As a
consequence of our effective Bertini theorem we obtain results of
existence of smooth rational points of $V$, namely conditions on $q$
which imply that $V$ has a smooth $\fq$--rational point. Finally,
for $s=r-2$ and $s=r-3$ we obtain estimates on the number of
$\fq$--rational points and smooth $\fq$--rational points of $V$, and
we discuss how these estimates can be used in order to determine the
average value set of ``small'' families of univariate polynomials
with coefficients in $\fq$.
\end{abstract}
%
%------------------------------------------------------------------------
%------------------------------------------------------------------------
%------------------------------------------------------------------------
%------------------------------------------------------------------------
%------------------------------------------------------------------------
%------------------------------------------------------------------------
%------------------------------------------------------------------------
%------------------------------------------------------------------------
%
\section{Introduction}
%
%Several questions in areas such as number theory, coding theory or
%cryptography are concerned with the set of rational points of an
%algebraic variety over a finite field. Many times one is faced with
%the problem of showing the existence of a rational point or needs
%accurate information about the cardinality of the set of rational
%points of a given variety. The aim of this paper is to obtain
%explicit estimates and results of existence of rational points of
%projective varieties which are complete intersections.
%
Let $\fq$ be the finite field of $q$ elements and let $\cfq$ be the
algebraic closure of $\fq$. We denote by $\Pp^n(\fq)$,
$\Pp^n:=\Pp^n(\cfq)$, $\A^n(\fq)$ and $\A^n:=\A^n(\cfq)$ the
$n$--dimensional projective and affine spaces defined over $\fq$ and
$\cfq$ respectively. For any affine or projective variety $V$
defined over $\fq$, we denote by $V(\fq)$ the set of $\fq$--rational
points of $V$, namely the set of points of $V$ with coordinates in
$\fq$, and by $|V(\fq)|$ its cardinality. Observe that, for any
$r\ge 0$, we have
$$p_r:=|\Pp^r(\fq)|= q^r + \cdots + q + 1.$$

Let $V\subset\Pp^n$ be an ideal--theoretic complete intersection
defined over $\fq$, of dimension $r$, multidegree
$\boldsymbol{d}:=(d_1\klk d_{n-r})$ and singular locus of dimension
$s\ge 0$. The main results of this paper are estimates on $|V(\fq)|$
and conditions on $q$ which imply that $V(\fq)$ is not empty. All
these estimates and conditions will be expressed in terms of $r$,
$\boldsymbol{d}$ and $s$.

In a fundamental work \cite{Deligne74}, P. Deligne has shown that if
$V$ is nonsingular, then
\begin{equation}\label{eq: estimate deligne intro}
\big||V(\fq)|-p_r\big|\le b_r'(n,\boldsymbol{d})\,q^{r/2},
\end{equation}
where $b_r'(n,\boldsymbol{d})$ is the $r$th primitive Betti number
of any nonsingular complete intersection of $\Pp^n$ of dimension $r$
and multidegree $\boldsymbol{d}$ (see, e.g., \cite[Theorem
4.1]{GhLa02a} for an explicit expression of $b_r'(n,\boldsymbol{d})$
in terms of $n$, $r$ and $\boldsymbol{d}$).

This result has been extended by C. Hooley and N. Katz to singular
complete intersections. More precisely, in \cite{Hooley91} it is
proved that if the singular locus of $V$ has dimension at most $s\ge
0$, then
\begin{equation}\label{eq: estimate hooley-katz}
|V(\fq)|=p_r+ \mathcal{O}(q^{{(r+s+1)}/{2}}),
\end{equation}
where the constant implied by the $\mathcal{O}$--notation depends
only on $n$, $r$ and $\boldsymbol{d}$, and it is not explicitly
given.

In \cite{GhLa02a} (see also \cite{GhLa02}), S. Ghorpade and G.
Lachaud have obtained the following explicit version of (\ref{eq:
estimate hooley-katz}):
\begin{equation}\label{eq: estimate GL intro}
\big||V(\fq)|-p_r\big|\le
b_{r-s-1}'(n-s-1,\boldsymbol{d})\,q^{{(r+s+1)}/{2}}+
C(n,r,\boldsymbol{d})q^{{(r+s)}/{2}},
\end{equation}
where $C(n,r,\boldsymbol{d}):=9\cdot
2^{n-r}\big((n-r)d+3\big)^{n+1}$ and $d:=\max_{1\le i\le n-r}d_i$.

From the point of view of possible applications of (\ref{eq:
estimate GL intro}), the fact that $C(n,r,\boldsymbol{d})$ depends
exponentially on $n$ may be an inconvenience. This is particularly
the case if $V$ is a hypersurface, because $C(n,r,\boldsymbol{d})$
becomes exponential in the degree of $V$. For this reason, in
\cite{CaMa07} it is shown that, if $V$ is normal, then one has
\begin{equation}\label{eq: estimate CM intro}
\big||V(\fq)|-p_r\big|\le b_1'(n-r+1,\boldsymbol{d})\,q^{r-1/2}+ 2
\big((n-r)d\delta\big)^2q^{r-1},
\end{equation}
provided that $q>2(n-r)d\delta+1$ holds, where $\delta=d_1\cdots
d_{n-r}$ is the degree of $V$. This solves the exponential
dependency on $n$ of the error term in (\ref{eq: estimate GL intro})
for $s=r-2$ and $q$ large enough.
%
%------------------------------------------------------------------------
%------------------------------------------------------------------------
%------------------------------------------------------------------------
%------------------------------------------------------------------------
%
\subsection{Our contributions}
A fundamental tool for our work is an effective version of the
Bertini smoothness theorem. The Bertini smoothness theorem asserts
that a generic $(r-s-1)$--dimensional linear section of a complete
intersection $V\subset\Pp^n$ of dimension $r$ and singular locus of
dimension $s$ is nonsingular. With notations as above, an effective
version of this result establishes a threshold
$C(n,r,s,\boldsymbol{d})$ such that for $q>C(n,r,s,\boldsymbol{d})$
there exists a nonsingular unidimensional linear section defined
over $\fq$ of a complete intersection defined over $\fq$. In this
paper we show the following theorem (see Theorem \ref{theorem:
Bertini} and Corollary \ref{coro: existencia fibra no singular
fq-definible} below).
\begin{theorem}\label{theorem: Bertini intro}
Let $V\subset\Pp^n$ be a complete intersection defined over $\fq$,
of dimension $r$, degree $\delta$, multidegree
$\boldsymbol{d}:=(d_1\klk d_{n-r})$ and singular locus of dimension
at most $s\ge 0$. Let $D:=\sum_{i=1}^{n-r} (d_i-1)$. Then for
$q>(n+1)^2D^{r-s-1}\delta$ there exist nonsingular
$(r-s-1)$--dimensional linear sections of $V$ defined over $\fq$.
\end{theorem}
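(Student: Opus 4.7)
The plan is to follow the strategy announced in the abstract: study the fibers of a generic linear projection $\pi\colon V\to\Pp^{s+1}$, use the polar variety of its exceptional set to control the discriminant locus in the target, and then count $\fq$-rational points of $\Pp^{s+1}$ lying outside this locus.

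\emph{Step 1 (Choice of a generic $\pi$ over $\fq$).} I would parametrize linear projections $V\to\Pp^{s+1}$ by $(s+2)$-tuples $L=(L_0,\ldots,L_{s+1})$ of linear forms on $\Pp^n$, so that the parameter space is $\A^{(n+1)(s+2)}$. Those $L$ for which $\pi_L$ is well-defined on $V$, has generic fiber of dimension $r-s-1$, and meets natural transversality requirements with the singular locus of $V$ form a Zariski open subset whose complement is a hypersurface of bounded degree. A routine $\fq$-point count on this hypersurface produces an admissible $L$ with coefficients in $\fq$ provided that $q$ is of order $(n+1)$ times the relevant degree.

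\emph{Step 2 (Polar variety and discriminant).} For such an admissible $L$, consider the polar variety $P\subset V$ consisting of the points of $V$ at which the differential of $\pi_L|_V$ fails to be surjective. The discriminant locus $W:=\pi_L(P)\cup\pi_L(\mathrm{Sing}(V))\subset\Pp^{s+1}$ then contains every point whose fiber under $\pi_L$ is singular, and is a proper Zariski closed subset of $\Pp^{s+1}$. Locally $P$ is cut out of the ambient space by the defining equations of $V$ together with the vanishing of the $(r-s)\times(r-s)$ minors of a Jacobian-type matrix built from those equations and from $L$; a multihomogeneous B\'ezout estimate applied to this description bounds $\deg P$, and hence $\deg W$, by $D^{r-s-1}\delta$.

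\emph{Step 3 (Counting outside $W$).} Since $\dim W\le s$, the standard estimate $|W(\fq)|\le(\deg W)\,p_s$ gives
\[
|\Pp^{s+1}(\fq)\setminus W(\fq)|\;\ge\;p_{s+1}-D^{r-s-1}\delta\,p_s,
\]
which is strictly positive as soon as $q>D^{r-s-1}\delta$. Any $y\in\Pp^{s+1}(\fq)\setminus W(\fq)$ then yields a nonsingular $(r-s-1)$-dimensional linear section $\pi_L^{-1}(y)$ of $V$ defined over $\fq$. Combining this counting threshold with the one from Step 1 produces the stated bound $q>(n+1)^2D^{r-s-1}\delta$: one factor of $n+1$ comes from the parameter count for $L$, and the other from the ambient-dimension dependence of the point-counting estimate once both genericity conditions are imposed simultaneously.

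The main obstacle I expect is Step 2: extracting the sharp bound $\deg W\le D^{r-s-1}\delta$ from the multihomogeneous B\'ezout theorem requires careful bookkeeping of the bidegrees attached to the polar variety, so that the exponent of $D$ comes out to be exactly the codimension $r-s-1$ of a generic fiber, with $\delta$ appearing linearly and no additional factor polynomial in $n$ creeping in beyond the unavoidable $(n+1)^2$ of the counting stage.
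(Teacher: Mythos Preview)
Your three-step outline is exactly the paper's strategy, but you have misplaced the hard work and the key tool. In the paper, Step~2 is elementary: once the polar variety $\M(L)$ has the expected dimension $s$, one cuts $V$ down to a variety $Z(L)\supset\M(L)\cup\Sigma$ of pure dimension $s$ by $r-s$ (not $r-s-1$) generic linear combinations of the maximal minors, each of degree $D$ in $X$; the \emph{ordinary} B\'ezout inequality then gives $\deg W(L)\le\deg Z(L)\le D^{r-s}\delta$ (Proposition~4.3 and Theorem~6.1). Your exponent $r-s-1$ comes from confusing the fiber dimension with the codimension of the polar variety in $V$, and your claim that no factor polynomial in $n$ appears at this stage is correct precisely because only ordinary B\'ezout is used here, not the multihomogeneous one.

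The multihomogeneous B\'ezout theorem is used in Step~1, which you call ``routine'' but which is in fact the technical core of the paper. One must show that the set of $\boldsymbol{\lambda}\in(\Pp^n)^{s+2}$ for which $\M(L)$ fails to have dimension $s$, or for which $V\cap L$ is singular or of wrong dimension, lies in a hypersurface $\mathcal{H}\subset(\Pp^n)^{s+2}$ of controlled multidegree. The paper does this by analysing the \emph{generic} polar variety $W\subset\Pp^n\times(\Pp^n)^{s+2}$, computing its class in the Chow ring of the product via multihomogeneous B\'ezout, and pushing forward to bound the multidegree of $\mathcal{H}$ by roughly $(n-s)(r-s)D^{r-s-1}\delta$ (Theorem~4.4 and Corollary~5.5). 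This is where the factor of order $(n+1)^2$ arises, not from the point-counting step as you suggest; once $\mathcal{H}$ is in hand, finding $\boldsymbol{\lambda}\in(\Pp^n(\fq))^{s+2}\setminus\mathcal{H}$ only requires $q$ to exceed the multidegree of $\mathcal{H}$ (Corollary~3.2).
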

%
%Furthermore, Theorem \ref{theorem: Bertini} characterizes the set of
%$(r-s-1)$-dimensional linear sections of $V$ which are nonsingular.
%
We remark that \cite{Ballico03} and \cite{CaMa07} provide effective
versions of the Bertini smoothness theorem for hypersurfaces and
normal complete intersections respectively. Theorem \ref{theorem:
Bertini intro} significantly improves and generalizes both results.

The linear sections underlying Theorem \ref{theorem: Bertini intro}
are obtained as (the Zariski closure of) fibers of a ``generic''
linear mapping $\pi:V\to\Pp^{s+1}$. For this purpose, it is
necessary to analyze the set $S$ of critical points of $\pi$. Our
treatment of the set $S$ relies on the notion of polar varieties.
Polar varieties are a classical concept of projective geometry
which, in its modern formulation, was introduced in the 1930's by F.
Severi and J. Todd. Around 1975 a renewal of the theory of polar
varieties took place with essential contributions due to R. Piene
\cite{Piene78}, B. Teissier \cite{Teissier82} and others (see
\cite{Teissier88} for a historical account and references). Our main
result in connection with polar varieties is a genericity condition
on $\pi$ which implies that the polar variety associated to the
exceptional locus of $\pi$ has the expected dimension (Theorem
\ref{theorem: dimension polar variety intrinseca}).

More precisely, let $\boldsymbol{\lambda}\in(\Pp^n)^{s+2}$ denote
the matrix of coefficients of the linear forms defining $\pi$. We
show that there exists a hypersurface of $(\Pp^n)^{s+2}$ which
contains all the points $\boldsymbol{\lambda}$ for which the
exceptional locus of $\pi$ has not the expected dimension. In order
to bound the degree of this hypersurface, we use tools from
intersection theory for products of projective spaces, such as a
multiprojective version of the B\'ezout theorem (see, e.g.,
\cite[Theorem 1.11]{DaKrSo13}). Combining this with the results on
the number of $\fq$--rational points of multiprojective
hypersurfaces of Section \ref{section: zeros multih hypersurfaces}
we obtain suitable bounds on the number of nonsingular linear
sections of $V$ defined over $\fq$.

Next we obtain conditions on $q$ which imply that the variety $V$
under consideration has a smooth $\fq$--rational point. A classical
problem is that of establishing conditions which imply that a given
variety has an $\fq$--rational point. Nevertheless, in several
number--theoretical applications is not just an $\fq$--rational
point what is required, but a {\em smooth} $\fq$--rational point
(see, e.g., \cite{LeSc73}, \cite{Wooley08}, \cite{Zahid10}).

A standard approach to this question consists of combining a lower
bound for the number of $\fq$--rational points with an upper bound
for the number of singular $\fq$--rational points of $V$. Instead of
doing this, we use our effective Bertini theorem, namely we obtain a
condition on $q$ which implies that there exists a nonsingular
$(r-s-1)$--dimensional linear section $S$ of $V$ defined over $\fq$,
and apply Deligne's estimate (\ref{eq: estimate deligne intro}) to
this linear section. As the linear section $S$ is contained in the
smooth locus $V_{\rm sm}:=V\setminus{\rm Sing}(V)$, the existence of
an $\fq$--rational point of $S$ implies that of a smooth
$\fq$--rational point of $V$. More precisely, we obtain the
following result (see Corollaries \ref{coro: existence codim 2} and
\ref{coro: existence codim 3}).
\begin{theorem}
Let $V\subset\Pp^n$ be a complete intersection defined over $\fq$,
of dimension $r$, degree $\delta$, multidegree $\boldsymbol{d}$ and
singular locus of dimension at most $s$. Let $D:=\sum_{i=1}^{n-r}
(d_i-1)$. If either $s=r-2$ and $q>2(D+2)^2\delta^2$, or $s=r-3$ and
$q>3D(D+2)^2\delta$ holds, then $V$ has a smooth $\fq$--rational
point.
\end{theorem}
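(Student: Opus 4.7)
The plan is to combine the effective Bertini theorem (Theorem \ref{theorem: Bertini intro}) with Deligne's estimate \eqref{eq: estimate deligne intro}. Concretely, I would apply Theorem \ref{theorem: Bertini intro} to produce a nonsingular $(r-s-1)$-dimensional linear section $S\subset V$ defined over $\fq$, then apply Deligne's bound to $S$ to deduce $S(\fq)\neq\emptyset$, and finally observe that $S$ is contained in the smooth locus $V_{\rm sm}=V\setminus\mathrm{Sing}(V)$, so any $\fq$-rational point of $S$ provides the desired smooth $\fq$-rational point of $V$. Note that $S$ is itself a complete intersection of multidegree $\boldsymbol{d}$ and degree $\delta$ in a projective space of dimension $n-s-1$.

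For $s=r-2$, the section $S$ is a smooth projective curve. By adjunction, its genus $g$ satisfies $2g-2=(D-2)\delta$, and the Hasse--Weil bound (i.e.\ Deligne's estimate specialized to curves) gives
\[
|S(\fq)|\ \ge\ q+1-2g\sqrt{q}.
\]
The positivity of the right-hand side requires essentially $q>(2g+1)^2$, which is comfortably implied by $q>2(D+2)^2\delta^2$ once one expresses $2g+1$ in terms of $D$ and $\delta$. For $s=r-3$, $S$ is a smooth projective surface of multidegree $\boldsymbol{d}$ in $\Pp^{n-r+2}$, and Deligne gives $|S(\fq)|\ge q^2+q+1-b'_2(n-r+2,\boldsymbol{d})\,q$, which is positive once $q$ exceeds a quantity of order $D(D+2)^2\delta$ after substituting the explicit formula for $b'_2$ from \cite[Theorem~4.1]{GhLa02a}.

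The main obstacle is arithmetic: the Bertini hypothesis $q>(n+1)^2D^{r-s-1}\delta$ carries an $(n+1)^2$ factor that the target thresholds $2(D+2)^2\delta^2$ and $3D(D+2)^2\delta$ do not. To eliminate the dependence on $n$ I would reduce to the nondegenerate case; after $s+1$ linear sections the resulting variety $S$ of dimension $r-s-1$ and degree $\delta$ sits nondegenerately in some $\Pp^{N}$ with $N\le (r-s-1)+\delta-1$, so that $n-s-1$ can be replaced by $\delta+r-s-2$ up to restricting to the smallest linear subspace containing $S$. This, together with $D^{r-s-1}\le (D+2)^{r-s-1}$ and the elementary inequalities $r-s-1\in\{1,2\}$ in our two cases, should permit both the Bertini hypothesis and the Deligne positivity condition to be absorbed into a single bound of the form stated.

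Finally, I would verify the two numerical inequalities case by case, checking that the coefficients $2$ and $3$ in front are chosen so as to dominate simultaneously (i) the Bertini threshold for the linear section $S$ in its (reduced) ambient space, and (ii) the Deligne positivity condition on $|S(\fq)|$. Once these inequalities are established, the argument concludes: $S(\fq)\neq\emptyset$ and $S\subset V_{\rm sm}$, so $V$ has a smooth $\fq$-rational point.
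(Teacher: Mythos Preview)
Your overall strategy is exactly the paper's: produce a nonsingular $(r-s-1)$--dimensional linear section $S\subset V$ over $\fq$ via the effective Bertini theorem, apply Deligne's estimate to force $S(\fq)\neq\emptyset$, and conclude via $S\subset V_{\rm sm}$ (Remark~\ref{rem: V_y is contained in V_sm}). Your Betti--number computations for $S$ are also correct.

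The gap is in your elimination of the $n$--dependence. Reducing to the span of $S$ is circular: the effective Bertini theorem is what produces $S$ from $V\subset\Pp^n$, and its threshold is computed from the data of $V$ in $\Pp^n$ \emph{before} $S$ exists; you cannot retroactively replace the ambient dimension by the span of the output. (Passing instead to the span of $V$ would require checking that $V$ remains a complete intersection there with a usable multidegree, and in any case would only bound $n$ by $r+\delta-1$, leaving an uncontrolled $r$--dependence.) For the Deligne side no reduction is needed at all: $b_1'$ and $b_2'$ are intrinsic to $S$.

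The paper's fix is a one--line observation you are missing. Under the standing hypothesis $d_i\ge 2$ for all $i$, one has
\[
n-r\ \le\ \sum_{i=1}^{n-r}(d_i-1)\ =\ D.
\]
The paper then uses not the crude threshold $(n+1)^2D^{r-s-1}\delta$ of Theorem~\ref{theorem: Bertini intro} but the sharper quantity $B_{\boldsymbol{d},s}$ of Corollary~\ref{coro: existencia fibra no singular fq-definible} and Theorem~\ref{theorem: existence}, whose $n$--dependence enters only through the factor $(n-s)(r-s)=\big((n-r)+(r-s)\big)(r-s)$. Substituting $n-r\le D$ and $r-s\in\{2,3\}$ bounds everything by a polynomial in $D$ and $\delta$ alone, and the stated thresholds $2(D+2)^2\delta^2$ and $3D(D+2)^2\delta$ fall out by direct arithmetic (Corollaries~\ref{coro: existence codim 2} and~\ref{coro: existence codim 3}).
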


Finally, we obtain estimates on the number of $\fq$--rational points
and smooth $\fq$--rational points of a complete intersection for
which the singular locus has dimension $s$ at most $r-2$ or $r-3$.
For this purpose, assuming that there exists a linear mapping
$\pi:V\to\Pp^{s+1}$ defined over $\fq$ which is generic in the sense
above, we express $V$ as the union of $p_{s+1}:=|\Pp^{s+1}(\fq)|$
linear sections of $V$ of dimension $r-s-1$, namely the Zariski
closure of the fibers of the points of $\Pp^{s+1}(\fq)$ under $\pi$.
``Most'' fibers will be nonsingular and thus Deligne's estimate can
be applied to them, while the $\fq$--rational points lying in the
remaining fibers do not make a significant contribution to the
estimate. Summarizing, we obtain the following result (see
Corollaries \ref{coro: estimate normal variety} and \ref{coro:
estimate codim 3} below).
\begin{theorem}\label{theorem: estimate intro}
Let $V\subset\Pp^n$ be a complete intersection defined over $\fq$,
of dimension $r$, degree $\delta$, multidegree $\boldsymbol{d}$ and
singular locus of dimension at most $s\in\{r-3,r-2\}$. Let
$D:=\sum_{i=1}^{n-r} (d_i-1)$. Then, for $s\le r-2$, we have:
$$\begin{array}{rl}\big||V(\fq)|-p_r\big|&\le
(\delta(D-2)+2)q^{r-1/2} +14D^2\delta^2q^{r-1},\\
\big||V_{\rm sm}(\fq)|-p_r\big|&\le (\delta(D-2)+2)q^{r-1/2}
+8(r+1)D^2\delta^2q^{r-1}.\end{array}$$
On the other hand, for $s\le r-3$, we have:
$$\begin{array}{rl}\big||V(\fq)|-p_r\big|&\le
14D^3\delta^2q^{r-1},\\
\big||V_{\rm sm}(\fq)|-p_r\big|&\le
(34r-20)D^3\delta^2q^{r-1}.\end{array} $$
\end{theorem}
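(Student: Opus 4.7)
The plan is to exploit a generic linear projection $\pi:V\to\Pp^{s+1}$ supplied by the effective Bertini theorem (Theorem \ref{theorem: Bertini intro}), decompose $V(\fq)$ along its $\fq$--rational fibers, apply Deligne's estimate \eqref{eq: estimate deligne intro} to the nonsingular ones, and trivially bound the remainder. When $q$ is below the threshold of Theorem \ref{theorem: Bertini intro}, the stated right-hand sides already exceed the crude bound $|V(\fq)|\le\delta q^r$, so the estimate is vacuous; hence I may assume henceforth that $q$ is large enough for a generic $\pi$ to exist over $\fq$.

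Under this hypothesis I would produce an $\fq$--definable linear map $\pi:V\to\Pp^{s+1}$ given by linear forms $\ell_0\klk\ell_{s+1}$, whose exceptional locus $E=\{\ell_0=\cdots=\ell_{s+1}=0\}$ meets $V$ in a complete intersection of the expected dimension $r-s-2$, and such that the ``bad'' locus $\mathcal{B}\subset\Pp^{s+1}$ of base points with singular fiber lies in a hypersurface of degree at most $cD^{r-s-1}\delta$ (for an explicit small $c$), coming from the polar-variety analysis of Theorem \ref{theorem: dimension polar variety intrinseca} combined with the multiprojective B\'ezout theorem. Writing $V_{\mathbf{b}}\subset V$ for the Zariski closure of $\pi^{-1}(\mathbf{b})$, the subsets $V_{\mathbf{b}}\setminus E$ partition $V\setminus E$ while each $V_{\mathbf{b}}$ contains $V\cap E$, yielding the accounting identity
\[
|V(\fq)| \;=\; \sum_{\mathbf{b}\in\Pp^{s+1}(\fq)}|V_{\mathbf{b}}(\fq)| \;-\; (p_{s+1}-1)\,|(V\cap E)(\fq)|.
\]
Each $V_{\mathbf{b}}$ is a complete intersection in $\Pp^n$ of dimension $r-s-1$, degree $\delta$ and multidegree $(\boldsymbol{d},1\klk 1)$ (with $s+1$ ones). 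For $\mathbf{b}\notin\mathcal{B}(\fq)$ Deligne gives $\bigl||V_{\mathbf{b}}(\fq)|-p_{r-s-1}\bigr|\le b'_{r-s-1}q^{(r-s-1)/2}$; for $\mathbf{b}\in\mathcal{B}(\fq)$ I would use the trivial $|V_{\mathbf{b}}(\fq)|\le\delta p_{r-s-1}$; and for $V\cap E$, itself a complete intersection of dimension $r-s-2$ and degree $\le\delta$, I would apply Deligne, or \eqref{eq: estimate GL intro} if it happens to be singular. Substituting everything and invoking the telescoping identity $p_{s+1}(p_{r-s-1}-p_{r-s-2})+p_{r-s-2}=p_r$ collapses the main terms to $p_r$. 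For $s=r-2$ the fibers are curves and the adjunction formula gives $b'_1=\delta(D-2)+2$, which produces the stated $q^{r-1/2}$ term, while the bad-fiber count and the auxiliary estimate on $V\cap E$ together give the $14D^2\delta^2 q^{r-1}$ term. For $s=r-3$ the fibers are surfaces, the Deligne error is already of order $q^{r-1}$, and all contributions merge into the single $14D^3\delta^2 q^{r-1}$ term.

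For the smooth-point estimate I would use $\bigl||V_{\rm sm}(\fq)|-p_r\bigr|\le\bigl||V(\fq)|-p_r\bigr|+|\mathrm{Sing}(V)(\fq)|$ and bound $|\mathrm{Sing}(V)(\fq)|$ by describing $\mathrm{Sing}(V)$ via the vanishing of maximal minors of the Jacobian of $V$ and applying the multiprojective B\'ezout theorem, yielding $\deg\mathrm{Sing}(V)=O\bigl((r+1)D^{r-s-1}\delta\bigr)$ and hence $|\mathrm{Sing}(V)(\fq)|\le\deg\mathrm{Sing}(V)\cdot q^s+\text{lower order}$; this accounts for the extra factors $8(r+1)$ and $34r-20$.

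I expect the main obstacle to be not any single estimate but the careful bookkeeping that links them: tracking constants through the Jacobian/polar-variety/multiprojective-B\'ezout chain so that the degree bounds on $\mathcal{B}$, $\mathrm{Sing}(V)$, and $V\cap E$ all line up with the telescoping cancellation of the leading terms, and verifying that the coefficients $\delta(D-2)+2$, $14$, $8(r+1)$ and $34r-20$ come out exactly as stated rather than with small slack.
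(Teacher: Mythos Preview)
Your plan for $|V(\fq)|$ is exactly the paper's argument: the accounting identity $|V(\fq)|=\sum_y N_y-(p_{s+1}-1)\,|(V\cap L)(\fq)|$, Deligne on the fibers over $\Pp^{s+1}\setminus W_L$, the trivial bound $N_y\le\delta p_{r-s-1}$ on the fibers over $W_L$, and the telescoping identity $p_r=p_{s+1}p_{r-s-1}-(p_{s+1}-1)p_{r-s-2}$ are precisely what the paper does in Theorem~\ref{theorem: estimate for general s}. Two small slips: the bad locus $W_L\subset\Pp^{s+1}$ has degree at most $D^{r-s}\delta$, not $cD^{r-s-1}\delta$ (this is Proposition~\ref{prop: polar variety as a subset of a variety of dimension s} plus \eqref{eq:degree linear projection}); and the paper does not apply Deligne to $V\cap L$ but simply uses $|e-p_{r-s-2}|\le(\delta-1)p_{r-s-2}$, which is already lower order.

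Your route for $|V_{\rm sm}(\fq)|$, however, is \emph{different} from the paper's and is in fact simpler. The paper does not use the triangle inequality $\big||V_{\rm sm}(\fq)|-p_r\big|\le\big||V(\fq)|-p_r\big|+|\Sigma(\fq)|$; instead it averages the quantity $\big||V_{\rm sm}(\fq)|-p_r\big|$ over all $\boldsymbol{\lambda}\in(\Pp^n(\fq))^{s+2}$, bounds the contribution of $\boldsymbol{\lambda}\in\mathcal{H}(\fq)$ trivially using the multihomogeneous Serre bound on $|\mathcal{H}(\fq)|$, and for $\boldsymbol{\lambda}\notin\mathcal{H}$ repeats the fiber argument (noting that good fibers lie in $V_{\rm sm}$ by Remark~\ref{rem: V_y is contained in V_sm}). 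The extra factors $8(r+1)$ and $34r-20$ in the statement arise from that averaging step, via the term $2(s+2)(\delta-1)B_{\boldsymbol{d},s}$, \emph{not} from $|\Sigma(\fq)|$. Your explanation of the constants is therefore backwards: since $\dim\Sigma\le s\le r-2$, one has $|\Sigma(\fq)|\le D^{r-s}\delta\,p_s=O(q^{r-2})$, which is of strictly lower order than the $q^{r-1}$ term and does not affect the leading coefficient at all. In other words, your triangle-inequality approach is correct and would actually reproduce essentially the same constant $14$ for $V_{\rm sm}$ as for $V$, improving on the stated bounds; the larger constants in the paper are an artifact of its particular method for handling small $q$ uniformly.
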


Our estimates for the number of $\fq$--rational points follow the
pattern of (\ref{eq: estimate GL intro}) for $s=r-2$ or $s=r-3$, but
differ from (\ref{eq: estimate GL intro}) in that the exponential
dependency on $n$ is not present. In Section
\ref{section:estimacion} we show that Theorem \ref{theorem: estimate
intro} yields a more accurate estimate than (\ref{eq: estimate GL
intro}) in the case $s=r-2$ and $s=r-3$ for varieties of large
dimension, say $r\ge (n+1)/2$, or small degree, say $\delta\le
(2(n-r))^{n-r}$. On the other hand, (\ref{eq: estimate GL intro})
may be preferable to Theorem \ref{theorem: estimate intro} for
varieties of small dimension and large degree. In this sense, we may
say that Theorem \ref{theorem: estimate intro} complements (\ref{eq:
estimate GL intro}) for $s=r-2$ and $s=r-3$. Finally, Theorem
\ref{theorem: estimate intro} improves (\ref{eq: estimate CM intro})
for normal varieties in that it holds without any restriction on
$q$, while the latter only holds for $q>2(n-r)d\delta+1$.

We end this paper discussing a problem that requires the estimates
underlying Theorem \ref{theorem: estimate intro}: the average value
set of ``small'' families of polynomials. We sketch how we apply
Theorem \ref{theorem: estimate intro} in order to determine the
average value set of families of univariate polynomials of $\fq[T]$
of fixed degree where certain coefficients are fixed. Since this
problem is concerned with a complete intersection of ``low'' degree,
our estimate yields a significant gain compared with what is
obtained by means of (\ref{eq: estimate GL intro}).
%
%----------------------------------------------------------------
%----------------------------------------------------------------
%----------------------------------------------------------------
%----------------------------------------------------------------
%----------------------------------------------------------------
%----------------------------------------------------------------
%----------------------------------------------------------------
%----------------------------------------------------------------
%
\section{Notions, notations and preliminary results}
We use standard notions and notations of commutative algebra and
algebraic geometry as can be found in, e.g., \cite{Harris92},
\cite{Kunz85} or \cite{Shafarevich94}.

Let $\K$ be any of the fields $\fq$ or $\cfq$. We say that $V$ is a
projective (affine) variety defined over $\K$ if it is the set of
all common zeros in $\Pp^n$ ($\A^n$) of a family of homogeneous
polynomials $F_1\klk F_m \in\K[\xon]$ (of polynomials $F_1\klk F_m
\in \K[X_1 \klk X_n]$). In the remaining part of this section,
unless otherwise stated, all results referring to varieties in
general should be understood as valid for both projective and affine
varieties.

For a $\K$--variety $V$ in the $n$--dimensional (affine or
projective) space, we denote by $I(V)$ its defining ideal and by
$\K[V]$ its coordinate ring. The {\em dimension} $\dim V$ of a
$\K$--variety $V$ is the (Krull) dimension of its coordinate ring
$\K[V]$.  The {\em degree} $\deg V$ of an irreducible $\K$--variety
$V$ is the maximum number of points lying in the intersection of $V$
with a generic linear subspace $L$ of codimension $\dim V$ for which
$|V\cap L|<\infty$ holds. More generally, if $V=V_1\cup\cdots\cup
V_s$ is the decomposition of $V$ into irreducible $\K$--components,
we define the degree of $V$ as $\deg V:=\sum_{i=1}^s\deg V_i$ (cf.
\cite{Heintz83}). We shall say that $V$ has {\em pure dimension} $r$
if every irreducible $\K$--component of $V$ has dimension $r$. A
$\K$--variety $V $ is {\em absolutely irreducible} if it is
irreducible as $\cfq$--variety.

We say that a $\K$--variety $V$ of dimension $r$ in the
$n$--dimensional space is an (ideal--theoretic) {\em complete
intersection} if its ideal $I(V)$ over $\K$ can be generated by
$n-r$ polynomials. If $V\subset\Pp^n$ is a complete intersection
defined over $\K$, of dimension $r$ and degree $\delta$, and $F_1
\klk F_{n-r}$ is a system of generators of $I(V)$, the degrees
$d_1\klk d_{n-r}$ depend only on $V$ and not on the system of
generators. Arranging the $d_i$ in such a way that $d_1\geq d_2 \geq
\cdots \geq d_{n-r}$, we call $\boldsymbol{d}:=(d_1\klk d_{n-r})$
the {\em multidegree} of $V$. In particular, it follows that
$\delta= \prod_{i=1}^{n-r}d_i$ holds.

%Let $V$ and $W$ be irreducibles $\K$--varieties of the same
%dimension and $f:V\to W$ be a regular dominant map. The degree of
%the field extension $f^{\ast}(\K(W))\subset \K(V)$ is called the
%{\em degree} of $f$. Suppose further that $W$ is normal and $f$ is
%a finite morphism. We say that $f$ is {\em unramified} at $y \in
%W$ if the number of inverse images of $y$ equals the degree of
%$f$.

An important tool for our estimates is the following {\em B\'ezout
inequality} (see \cite{Heintz83},
\cite{Fulton84}, \cite{Vogel84}): if $V$ and $W$ are
$\K$--varieties, then the following inequality holds:
  \begin{equation}\label{equation:Bezout}
    \deg (V\cap W)\le \deg V \cdot \deg W.
  \end{equation}

%We shall also make use of the following well--known identities
%relating the degree of an affine $\K$--variety $V \subset \A^n$, the
%degree of its projective closure (with respect to the projective
%Zariski $\K$--topology) $\overline{V} \subset \Pp^n$ and the degree
%of the affine cone $\widetilde{V}$ of $\overline{V}$ (see, e.g.,
%\cite[Proposition 1.11]{CaGaHe91}):
%
%  \begin{equation}\label{eq: degree projective closure}
%    \deg V= \deg \overline{V}= \deg\tilde{V}.
%  \end{equation}

Let $\phi:V\to W$ be a regular linear map of $\K$--varieties. Then
we have (see, e.g., \cite[Lemma 2.1]{CaMa07}):
  \begin{equation}\label{eq:degree linear projection}
    \deg \overline{\phi (V)} \leq \deg V.
  \end{equation}

For a given variety $V$, we denote by $V(\fq)$ the set of
$\fq$--rational points of $V$, namely, $V(\fq):=V\cap \Pp^n(\fq)$ in
the projective case and $V(\fq):=V\cap \A^n(\fq)$ in the affine
case. For a projective variety $V$ of dimension $r$ and degree
$\delta$ we have the upper bound (see \cite[Proposition
12.1]{GhLa02} or \cite[Proposition 3.1]{CaMa07})
 \begin{equation}\label{eq: projective upper bound}
   |V(\fq)|\leq \delta p_r.
 \end{equation}
%
%----------------------------------------------------------------
%----------------------------------------------------------------
%----------------------------------------------------------------
%----------------------------------------------------------------
%
\subsection{Multiprojective space}
Let $\N:=\Z_{\ge 0}$ be the set of nonnegative integers. For
$\boldsymbol{n}:=(n_1\klk n_m)\in\N^m$, we define
$|\boldsymbol{n}|:=n_1\plp n_m$ and $\boldsymbol{n}!:=n_1!\cdots
n_m!$. Given $\boldsymbol{\alpha},\boldsymbol{\beta}\in\N^m$, we
write $\boldsymbol{\alpha}\ge\boldsymbol{\beta}$ whenever
$\alpha_i\ge\beta_i$ holds for $1\le i\le m$. For
$\boldsymbol{d}:=(d_1\klk d_m)\in\N^m$, the set
$\N_{\boldsymbol{d}}^{\boldsymbol{n+1}}:=\N_{d_1}^{n_1+1}\times\cdots\times
\N_{d_m}^{n_m+1}$ consists of the elements
$\boldsymbol{a}:=(\boldsymbol{a}_1\klk\boldsymbol{a}_m)\in\N^{n_1+1}\times\cdots
\times\N^{n_m+1}$ with $|\boldsymbol{a}_i|=d_i$ for $1\le i\le m$.

For $\K:=\cfq$ or $\K:=\fq$, we denote by $\Pp^{\boldsymbol{n}}(\K)$
the multiprojective space $\Pp^{\boldsymbol{n}}(\K):=
\Pp^{n_1}(\K)\times\cdots\times \Pp^{n_m}(\K)$ defined  over $\K$.
We shall use the notation
$\Pp^{\boldsymbol{n}}:=\Pp^{\boldsymbol{n}}(\cfq)$. For $1\le i\le
m$, let $\boldsymbol{X}_i:=\{X_{i,0}\klk X_{i,n_i}\}$ be group of
$n_i+1$ variables and let $\boldsymbol{X}:=\{\boldsymbol{X}_1\klk
\boldsymbol{X}_m\}$. A {\em multihomogeneous} polynomial
$F\in\K[\boldsymbol{X}]$ of multidegree $\boldsymbol{d}:=(d_1\klk
d_m)$ is a polynomial which is homogeneous of degree $d_i$ in
$\boldsymbol{X}_i$ for $1\le i\le m$. An ideal
$I\subset\K[\boldsymbol{X}]$ is {\em multihomogeneous} if it is
generated by a family of multihomogeneous polynomials. For any such
ideal, we denote by $V(I)\subset\Pp^{\boldsymbol{n}}$ the variety
defined over $\K$ as its set of common zeros. In particular, an
hypersurface in $\Pp^{\boldsymbol{n}}$ defined over $\K$ is the set
of zeros of a multihomogeneous polynomial of $\K[\boldsymbol{X}]$.
The notions of irreducible variety and dimension of a subvariety of
$\Pp^{\boldsymbol{n}}$ are defined as in the projective space.

Let $V\subset\Pp^{\boldsymbol{n}}$ be an irreducible variety of
dimension $r$ and let $I(V)\subset\cfq[\boldsymbol{X}]$ be its
multihomogeneous ideal. The quotient ring
$\cfq[\boldsymbol{X}]/I(V)$ is multigraded and its part of
multidegree $\boldsymbol{b}\in\N^m$ is denoted by
$(\cfq[\boldsymbol{X}]/I(V))_{\boldsymbol{b}}$. The {\em
Hilbert--Samuel} function of $V$ is the function $H_V:\N^m\to\N$
defined as $H_V(\boldsymbol{b}):=\dim
(\cfq[\boldsymbol{X}]/I(V))_{\boldsymbol{b}}$. It turns out that
there exist $\boldsymbol{\delta_0}\in\N^m$ and a unique polynomial
$P_V\in\Q[z_1\klk z_m]$ of degree $r$ such that
$P_V(\boldsymbol{\delta})=H_V(\boldsymbol{\delta})$ for every
$\boldsymbol{\delta}\in\N^m$ with $\boldsymbol{\delta}\ge
\boldsymbol{\delta_0}$ (see, e.g., \cite[Proposition
1.8]{DaKrSo13}). For $\boldsymbol{b}\in\N^m_r$, we define the {\em
mixed degree of $V$ of index} $\boldsymbol{b}$ as the nonnegative
integer
$$\deg_{\boldsymbol{b}}(V):=\boldsymbol{b}!\,\mathrm{coeff}_{\boldsymbol{b}}(P_V).$$
This notion can be extended to equidimensional varieties and, more
generally, to equidimensional cycles (formal linear combination with
integer coefficients of subvarieties of equal dimension) by
linearity.

The Chow ring of $\Pp^{\boldsymbol{n}}$ is the graded ring
$$A^*(\Pp^{\boldsymbol{n}}):=\Z[\theta_1\klk\theta_m]/(\theta_1^{n_1+1}
\klk \theta_m^{n_m+1}),$$
where each $\theta_i$ denotes the class of the inverse image of a
hyperplane of $\Pp^{n_i}$ under the projection
$\Pp^{\boldsymbol{n}}\to\Pp^{n_i}$. Given a variety
$V\subset\Pp^{\boldsymbol{n}}$ of pure dimension $r$, its class in
the Chow ring is
$$[V]:=\sum_{\boldsymbol{b}}\deg_{\boldsymbol{b}}(V) \theta_1^{n_1-b_1}
\cdots \theta_m^{n_m-b_1}\in A^*(\Pp^{\boldsymbol{n}}),$$
where the sum is over all $\boldsymbol{b}\in\N^m_r$ with
$\boldsymbol{b}\le \boldsymbol{n}$.  This is an homogeneous element
of degree $|\boldsymbol{n}|-r$. In particular, if
$\mathcal{H}\subset\Pp^{\boldsymbol{n}}$ is an hypersurface and
$F\in\cfq[\boldsymbol{X}]$ is a polynomial of minimal degree
defining $\mathcal{H}$, then
$$[\mathcal{H}]:=\sum_{i=1}^m\deg_{\boldsymbol{X}_i}(F)\,\theta_i$$
(see, e.g., \cite[Proposition 1.10]{DaKrSo13}).
%
%----------------------------------------------------------------
%----------------------------------------------------------------
%----------------------------------------------------------------
%----------------------------------------------------------------
%----------------------------------------------------------------
%----------------------------------------------------------------
%----------------------------------------------------------------
%----------------------------------------------------------------
%
\section{Number of zeros of multihomogeneous
hypersurfaces}\label{section: zeros multih hypersurfaces}
Let $\boldsymbol{n}:=(n_1\klk n_m)\in\N^m$ and let
$\Pp^{\boldsymbol{n}}(\fq)$ be the multiprojective space defined
over $\fq$. For $1\le i\le m$, let $\boldsymbol{X}_i:=\{X_{i,0}\klk
X_{i,n_i}\}$ be a group of $n_i+1$ variables and let
$\boldsymbol{X}:=\{\boldsymbol{X}_1\klk \boldsymbol{X}_m\}$. Let
$F\in\cfq[\boldsymbol{X}]$ be a multihomogeneous polynomial of
multidegree $\boldsymbol{d}:=(d_1\klk d_m)$. In this section we
collect basic facts concerning the number of $\fq$--rational zeros
of $F$.

For $\boldsymbol{\alpha}\in\N^m$, we use the notations
$\boldsymbol{d}^{\boldsymbol{\alpha}}:=d_1^{\,\alpha_1}\cdots
d_m^{\,\alpha_m}$ and
$p_{\boldsymbol{n}-\boldsymbol{\alpha}}:=p_{n_1-\alpha_1}\cdots
p_{n_m-\alpha_m}$ for $\boldsymbol{n}\ge \alpha$. We have the
following result.
\begin{proposition}\label{prop: upper bound multihomogeneous}
Let $F\in\cfq[\boldsymbol{X}]$ be a multihomogeneous polynomial of
multidegree $\boldsymbol{d}$ with $\max_{1\le i\le m}d_i<q$ and let
$N$ be the number of zeros of $F$ in $\Pp^{\boldsymbol{n}}(\fq)$.
Then
$$N\le \eta_m(\boldsymbol{d},\boldsymbol{n})
:=\sum_{\boldsymbol{\varepsilon}\in\{0,1\}^m\setminus\{\boldsymbol{0}\}}
(-1)^{|\boldsymbol{\varepsilon}|+1}\boldsymbol{d}^{\,
\boldsymbol{\varepsilon}}p_{\boldsymbol{n}-\boldsymbol{\varepsilon}}.$$
\end{proposition}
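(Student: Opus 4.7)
The plan is to set up a clean inductive argument on $m$ that avoids carrying the full inclusion--exclusion bookkeeping at each step. The key preliminary observation is that $\eta_m(\boldsymbol{d},\boldsymbol{n})$ telescopes to a product: expanding $\prod_{i=1}^{m}(p_{n_i}-d_ip_{n_i-1})$ by distributivity and isolating the empty-set term yields
\[
\eta_m(\boldsymbol{d},\boldsymbol{n})=p_{n_1}\cdots p_{n_m}-\prod_{i=1}^{m}\bigl(p_{n_i}-d_ip_{n_i-1}\bigr).
\]
Thus the target upper bound $N\le\eta_m$ is equivalent to the \emph{lower} bound
\[
p_{n_1}\cdots p_{n_m}-N\ge\prod_{i=1}^{m}\bigl(p_{n_i}-d_ip_{n_i-1}\bigr)
\]
on the number of \emph{non}zeros of $F$ in $\Pp^{\boldsymbol{n}}(\fq)$. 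The hypothesis $d_i<q$ gives $p_{n_i}-d_ip_{n_i-1}=(q-d_i)p_{n_i-1}+1>0$ for every $i$, and the strict positivity of every factor is exactly what lets the induction propagate multiplicatively.

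For the base case $m=1$, I take $F$ nonzero homogeneous of degree $d_1$ in one group $\boldsymbol{X}_1$. The hypersurface $\{F=0\}\subset\Pp^{n_1}$ has dimension $n_1-1$ and degree at most $d_1$, and the projective upper bound \eqref{eq: projective upper bound} gives $N\le d_1\,p_{n_1-1}=\eta_1(d_1,n_1)$, equivalently $p_{n_1}-N\ge p_{n_1}-d_1p_{n_1-1}$.

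For the inductive step, I expand $F=\sum_\beta H_\beta(\boldsymbol{X}_1)\,\boldsymbol{X}_2^{\beta_2}\cdots\boldsymbol{X}_m^{\beta_m}$, where each $H_\beta\in\cfq[\boldsymbol{X}_1]$ is homogeneous of degree $d_1$. Let $B\subset\Pp^{n_1}(\fq)$ be the set of $\boldsymbol{x}_1$ for which the fiber $F(\boldsymbol{x}_1,\cdot)$ is the zero polynomial in the remaining groups of variables; equivalently, the common $\fq$-rational zero set of the $H_\beta$. Since $F\neq0$ some $H_{\beta_0}$ is nonzero, so $B\subseteq\{H_{\beta_0}=0\}$ and the base case yields $|B|\le d_1p_{n_1-1}$, whence $|\Pp^{n_1}(\fq)\setminus B|\ge p_{n_1}-d_1p_{n_1-1}$. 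For each $\boldsymbol{x}_1\notin B$ the fiber $F(\boldsymbol{x}_1,\cdot)$ is a nonzero multihomogeneous polynomial in $m-1$ groups of variables of multidegree $(d_2,\dots,d_m)$, so the inductive hypothesis (in the non-zeros form) provides at least $\prod_{i=2}^{m}(p_{n_i}-d_ip_{n_i-1})$ non-zeros in $\Pp^{(n_2,\dots,n_m)}(\fq)$. Summing over the good $\boldsymbol{x}_1$ and using the positivity of every factor gives at least
\[
\bigl(p_{n_1}-d_1p_{n_1-1}\bigr)\prod_{i=2}^{m}\bigl(p_{n_i}-d_ip_{n_i-1}\bigr)=\prod_{i=1}^{m}\bigl(p_{n_i}-d_ip_{n_i-1}\bigr)
\]
non-zeros of $F$ in $\Pp^{\boldsymbol{n}}(\fq)$, which is precisely the lower bound sought.

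The main obstacle I foresee is the reformulation at the outset: once one recognizes that $\eta_m$ collapses to $p_{\boldsymbol{n}}-\prod_i(p_{n_i}-d_ip_{n_i-1})$, the proof reduces to a short induction built only on the projective upper bound \eqref{eq: projective upper bound} applied to $\{F=0\}\subset\Pp^{n_1}$ together with the natural decomposition of $F$ by powers of $\boldsymbol{X}_1$. Without this reformulation one is forced to verify the messier direct inequality $|B|\,p_{n_2}\cdots p_{n_m}+(p_{n_1}-|B|)\eta_{m-1}\le\eta_m$, which in turn requires the auxiliary fact $\eta_{m-1}\le p_{n_2}\cdots p_{n_m}$ that the hypothesis $d_i<q$ secretly guarantees.
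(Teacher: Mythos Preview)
Your proof is correct and follows essentially the same approach as the paper: induction on $m$, splitting off one projective factor, bounding the set of points where the fiber polynomial vanishes identically via the single-factor projective bound \eqref{eq: projective upper bound}, and applying the inductive hypothesis on the complement. The only difference is cosmetic: you pass to the complementary count of non-zeros via the product identity $\eta_m(\boldsymbol{d},\boldsymbol{n})=\prod_i p_{n_i}-\prod_i(p_{n_i}-d_ip_{n_i-1})$, which streamlines the inductive step, whereas the paper carries the inclusion--exclusion form of $\eta_m$ directly and verifies the inequality $|Z_m|\bigl(p_{n_1}\cdots p_{n_{m-1}}-\eta_{m-1}\bigr)+\eta_{m-1}p_{n_m}\le\eta_m$ that you flag at the end.
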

\begin{proof}
We argue by induction on $m$. The case $m=1$ of the statement is
(\ref{eq: projective upper bound}).

Suppose that the statement holds for $m-1$ and let
$F\in\cfq[\boldsymbol{X}]$ be an $m$--homogeneous polynomial of
multidegree $\boldsymbol{d}:=(d_1\klk d_m)$. Let $N$ be the number
of zeros of $F$ in $\Pp^{\boldsymbol{n}}(\fq)$, and let $Z_m$ be the
subset of $\Pp^{n_m}(\fq)$ which consists of the elements
$\boldsymbol{x}_m$ such that the substitution
$F(\boldsymbol{X}_1\klk \boldsymbol{X}_{m-1},\boldsymbol{x}_m)$ of
$\boldsymbol{x}_m$ for $\boldsymbol{X}_m$ in $F$ yields the zero
polynomial of $\cfq[\boldsymbol{X}_1\klk\boldsymbol{X}_{m-1}]$.
According to (\ref{eq: projective upper bound}) we have $|Z_m|\le
d_mp_{n_m-1}$. Fix $\boldsymbol{x}_m\in\Pp^{n_m}(\fq)\setminus Z_m$
and denote by $N_{m-1}$ the number of zeros of
$F(\boldsymbol{X}_1\klk \boldsymbol{X}_{m-1},\boldsymbol{x}_m)$ in
$\Pp^{n_1}(\fq)\times\cdots\times \Pp^{n_{m-1}}(\fq)$. Then the
inductive hypothesis implies
$$N_{m-1}\le \eta_{m-1}(\boldsymbol{d}^*,\boldsymbol{n}^*),$$
where $\boldsymbol{d}^*:=(d_1\klk d_{m-1})$ and
$\boldsymbol{n}^*:=(n_1\klk n_{m-1})$. As a consequence, we obtain
\begin{eqnarray*}
N &\le& |Z_m|\cdot p_{n_1}\cdots p_{n_{m-1}}+(p_{n_m}-|Z_m|)
\cdot\eta_{m-1}(\boldsymbol{d}^*,\boldsymbol{n}^*)\\
&=& |Z_m|\left(p_{n_1}\cdots
p_{n_{m-1}}-\eta_{m-1}(\boldsymbol{d}^*,\boldsymbol{n}^*)\right)+
\eta_{m-1}(\boldsymbol{d}^*,\boldsymbol{n}^*)\cdot p_{n_m}\\&\le&
\eta_m(\boldsymbol{d},\boldsymbol{n}).
\end{eqnarray*}
This completes the proof of the proposition.
\end{proof}

Since the proof of Proposition \ref{prop: upper bound
multihomogeneous} is concerned with hypersurfaces, in order to bound
from above the number of $\fq$--rational points of a projective
hypersurface $\mathcal{H}\subset\Pp^n$ of degree $\delta$ one may
use the Serre bound (see \cite{Serre91}):
\begin{equation}\label{eq: upper bound Serre}
|\mathcal{H}(\fq)|\le \delta q^{n-1}+p_{n-2}.
\end{equation}
Although (\ref{eq: upper bound Serre}) is stated for hypersurfaces
defined over $\fq$ in \cite{Serre91}, it is easy to see that it also
holds for hypersurfaces defined over $\cfq$. Using (\ref{eq: upper
bound Serre}) the upper bound of Proposition \ref{prop: upper bound
multihomogeneous} can be slightly improved. In particular, it may be
worthwhile to remark that, if $\boldsymbol{d},\boldsymbol{n}\in\N^m$
are of the form $\boldsymbol{d}=(d\klk d)$ and
$\boldsymbol{n}:=(n\klk n)$, by using (\ref{eq: upper bound Serre})
we obtain
\begin{equation}\label{eq: upper bound Serre multih}
N\le p_n^m-(q^n-(d-1)q^{n-1})^m.
\end{equation}
%
%Nevertheless, we shall not use this refined version of Proposition
%\ref{prop: upper bound multihomogeneous} in the sequel.

A similar argument as in the proof of Proposition \ref{prop: upper
bound multihomogeneous} yields the following upper bound for the
number $N_a$ of zeros of $F$ in
$\fq^{\boldsymbol{n}+\boldsymbol{1}}:=\fq^{n_1+1}\times
\cdots\times\fq^{n_m+1}$:
\begin{equation}\label{eq: upper bound number points affine multihomogeneous}
N_a\le \eta_m^a(\boldsymbol{d},\boldsymbol{n}):=
\sum_{\boldsymbol{\varepsilon}\in\{0,1\}^m\setminus\{\boldsymbol{0}\}}
(-1)^{|\boldsymbol{\varepsilon}|+1}\boldsymbol{d}^{\,\boldsymbol{\varepsilon}}
\boldsymbol{q}^{\boldsymbol{n}+\boldsymbol{1}-\boldsymbol{\varepsilon}},
\end{equation}
where $\boldsymbol{q},\boldsymbol{1}\in\N^m$ are defined by
$\boldsymbol{q}:=(q\klk q)$ and $\boldsymbol{1}:=(1\klk 1)$.

%An immediate consequence is the following particular case of
%Proposition \ref{prop: upper bound multihomogeneous}.
%
%\begin{corollary}\label{coro: upper bound zeros multih}
%With notations as in Proposition \ref{prop: upper bound
%multihomogeneous}, suppose that $\boldsymbol{d},\boldsymbol{n}\in\N^m$ are
%of the form $\boldsymbol{d}=(d\klk d)$ and $\boldsymbol{n}:=(n\klk n)$. Then
%
%$$N\le p_n^m-(p_n-dp_{n-1})^m.$$
%\end{corollary}
%
%\begin{proof}
%According to Proposition \ref{prop: upper bound multihomogeneous},
%we have
%
%$$N-p_n^m\le-\sum_{\varepsilon\in\{0,1\}^m}
%(-d)^{|\varepsilon|}p_{n-1}^{|\varepsilon|}p_n^{m-|\varepsilon|}=
%-\sum_{j=0}^m{m\choose j}(-dp_{n-1})^jp_n^{m-j}.$$
%
%We immediately deduce the statement of the corollary.
%\end{proof}
%
We end this section with the following consequence of Proposition
\ref{prop: upper bound multihomogeneous}.
\begin{corollary}\label{coro: existencia no-cero pol multih}
Let $F\in\cfq[\boldsymbol{X}]$ be a multihomogeneous polynomial of
multidegree $\boldsymbol{d}$ and let $d:=\max_{1\le i\le m}d_i$. If
$q>d$, then there exists $\boldsymbol{x}\in
\Pp^{\boldsymbol{n}}(\fq)$ such that $F(\boldsymbol{x})\not=0$
holds.
\end{corollary}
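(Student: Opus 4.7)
The plan is to show that the upper bound $\eta_m(\boldsymbol{d},\boldsymbol{n})$ from Proposition \ref{prop: upper bound multihomogeneous} is strictly smaller than $|\Pp^{\boldsymbol{n}}(\fq)|=p_{n_1}\cdots p_{n_m}$ whenever $q>d$, so that $F$ must have at least one non--zero at some $\fq$--rational point.

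First I would observe that the hypothesis $q>d=\max_i d_i$ implies $\max_i d_i<q$, so Proposition \ref{prop: upper bound multihomogeneous} applies and the number $N$ of zeros of $F$ in $\Pp^{\boldsymbol{n}}(\fq)$ satisfies $N\le \eta_m(\boldsymbol{d},\boldsymbol{n})$. Next, adopting the convention $\boldsymbol{d}^{\boldsymbol{0}}=1$ and $p_{\boldsymbol{n}-\boldsymbol{0}}=p_{\boldsymbol{n}}$, I would write
$$
p_{\boldsymbol{n}}-\eta_m(\boldsymbol{d},\boldsymbol{n})
=\sum_{\boldsymbol{\varepsilon}\in\{0,1\}^m}(-1)^{|\boldsymbol{\varepsilon}|}
\boldsymbol{d}^{\,\boldsymbol{\varepsilon}}\,p_{\boldsymbol{n}-\boldsymbol{\varepsilon}}
=\prod_{i=1}^{m}\bigl(p_{n_i}-d_i\,p_{n_i-1}\bigr),
$$
which is just the expansion of the product using the independence of the factors corresponding to distinct indices $i$.

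The remaining step is to show that each factor $p_{n_i}-d_ip_{n_i-1}$ is strictly positive. Since $q>d_i$, one has $d_i-1\le q-2$, and a direct telescoping gives $(q-1)p_{n_i-1}=q^{n_i}-1$, hence
$$
p_{n_i}-d_ip_{n_i-1}=q^{n_i}-(d_i-1)p_{n_i-1}\ge q^{n_i}-(q-2)p_{n_i-1}\ge 1>0.
$$
Multiplying these positive factors together we conclude $\eta_m(\boldsymbol{d},\boldsymbol{n})<p_{\boldsymbol{n}}=|\Pp^{\boldsymbol{n}}(\fq)|$, so $N<|\Pp^{\boldsymbol{n}}(\fq)|$ and the claimed point $\boldsymbol{x}$ exists.

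There is no genuine obstacle here; the only care needed is the algebraic identification of $p_{\boldsymbol{n}}-\eta_m(\boldsymbol{d},\boldsymbol{n})$ with the product $\prod_i(p_{n_i}-d_ip_{n_i-1})$, which just reflects that the inclusion--exclusion defining $\eta_m$ factors over the $m$ coordinates.
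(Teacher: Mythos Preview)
Your proof is correct and follows essentially the same factorization idea as the paper: both arguments recognize that the inclusion--exclusion sum defining $\eta_m$ splits as a product over the $m$ factors and then check that each factor is strictly positive when $q>d$. The only cosmetic difference is that the paper passes to the affine count $\eta_m^a$ from (\ref{eq: upper bound number points affine multihomogeneous}), obtaining the cleaner factorization $\boldsymbol{q}^{\boldsymbol{n}+\boldsymbol{1}}-\eta_m^a(\boldsymbol{d},\boldsymbol{n})=\boldsymbol{q}^{\boldsymbol{n}}\prod_{i=1}^m(q-d_i)$, whereas you stay projective and must do a small extra computation to see $p_{n_i}-d_ip_{n_i-1}>0$.
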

\begin{proof}
It suffices to show that there exists
$\boldsymbol{x}\in\fq^{\boldsymbol{n}+\boldsymbol{1}}$ for which
$F(\boldsymbol{x})\not=0$ holds. For this purpose, according to
(\ref{eq: upper bound number points affine multihomogeneous}) we
have that the number of elements of
$\fq^{\boldsymbol{n}+\boldsymbol{1}}$ not annihilating $F$ is
bounded from above by the following quantity:
$$\boldsymbol{q}^{\boldsymbol{n}+\boldsymbol{1}}-\eta_m^a(\boldsymbol{d},\boldsymbol{n})
=\sum_{\boldsymbol{\varepsilon}\in\{0,1\}^m}
(-1)^{|\boldsymbol{\varepsilon}|}\boldsymbol{d}^{\,\boldsymbol{\varepsilon}}
\boldsymbol{q}^{\boldsymbol{n}+\boldsymbol{1}-\boldsymbol{\varepsilon}}=
\boldsymbol{q}^{\boldsymbol{n}}\prod_{j=1}^m(q-d_i).$$
Our hypothesis implies that the right--hand side of the previous
identities is strictly positive, which immediately yields the
corollary.
\end{proof}

%----------------------------------------------------------------
%----------------------------------------------------------------
%----------------------------------------------------------------
%----------------------------------------------------------------
%----------------------------------------------------------------
%----------------------------------------------------------------
%----------------------------------------------------------------
%----------------------------------------------------------------
%
\section{Polar varieties}\label{section: polar varieties}
Let $V \subset \Pp^n$ be a variety of pure dimension $r$ and degree
$\delta$. Let $\Sigma \subset V$ denote the singular locus of $V$
and let $V_{\rm sm}:=V\setminus\Sigma$. For each integer $s$ with
$0\leq s \leq r-2$  and for $x \in V_{\rm sm}$, a linear variety $L
\subset \Pp^n$ of dimension $n - s -2 $ meets $T_{x}V \subset \Pp^n$
in dimension at least $r-s-2 $. The set of points $x \in V_{\rm sm}$
such that the dimension of the intersection is greater than or equal
to $r-s-1$ is called the $s$th {\em polar variety} of $V$ with
respect to $L$ and is denoted by $\M(L)$. In symbols,
  $$
    \M(L): = \left\{ x \in V_{\rm sm}: \dim (T_{x}V \cap L) \geq r-s-1 \right\}.
  $$

This is a classical notion of  projective geometry. The polar
variety $\M(L)$ is empty or of pure dimension at least $s$. In fact,
following \cite{Kleiman76} we have that, for a generic $L$, the
polar variety $\M(L)$ has dimension $s$. We include a proof of this
result for the sake of completeness (see also \cite[Transversality
Lemma 1.3]{Piene78}).
\begin{proposition}\label{prop: dimension polar variety}
For a generic linear variety $L\subset\Pp^n$ of dimension $n-s-2$,
the polar variety $\M(L)$ has dimension $s$.
\end{proposition}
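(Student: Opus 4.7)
My plan is to prove this by an incidence-correspondence argument on the Grassmannian $\mathbb{G} := \mathbb{G}(n-s-2,n)$ of $(n-s-2)$-dimensional linear subspaces of $\Pp^n$. I would introduce the incidence variety
$$W := \{(x,L) \in V_{\rm sm} \times \mathbb{G} : \dim(T_xV \cap L) \ge r-s-1\},$$
together with its two natural projections $\pi_1 : W \to V_{\rm sm}$ and $\pi_2 : W \to \mathbb{G}$. Since the fiber $\pi_2^{-1}(L)$ is precisely $\M(L)$, the goal reduces to showing that the generic fiber of $\pi_2$ has dimension $s$.

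First I would compute $\dim W$ through $\pi_1$. For every $x \in V_{\rm sm}$ the fiber $\pi_1^{-1}(x)$ is the Schubert variety of $(n-s-2)$-planes of $\Pp^n$ meeting the fixed $r$-plane $T_xV$ in projective dimension at least $r-s-1$. Translating to vector-space dimensions and applying the standard Schubert codimension formula (the relevant partition being the $(r-s)\times 1$ rectangle) gives that this Schubert variety is irreducible of codimension exactly $r-s$ in $\mathbb{G}$. Its isomorphism type depends only on the dimension of $T_xV$, which is constant along $V_{\rm sm}$, so $\pi_1$ is surjective with equidimensional irreducible fibers and
$$\dim W \;=\; \dim V_{\rm sm} + \dim \mathbb{G} - (r-s) \;=\; s + \dim \mathbb{G}.$$

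Next I would analyse $\pi_2$ via upper semi-continuity of fiber dimension: it is enough to exhibit a single $L_0 \in \mathbb{G}$ with $\dim \M(L_0) \le s$. A natural choice is $L_0 = \{\ell_0 = \cdots = \ell_{s+1} = 0\}$ for sufficiently generic linear forms; one identifies $\M(L_0)$ with the critical locus of the induced linear projection $\pi : V \dashrightarrow \Pp^{s+1}$, and a rank stratification of the Jacobian of $\pi|_{V_{\rm sm}}$ provides the bound $\dim \M(L_0) \le s$ for generic $\ell_i$. Combining this upper bound with the a priori lower bound $\dim \M(L) \ge s$ recalled just before the proposition statement, Chevalley's theorem of fiber dimension applied to the dominant map $\pi_2 : W \to \mathbb{G}$ produces a non-empty Zariski open $U \subset \mathbb{G}$ on which $\M(L)$ has pure dimension $\dim W - \dim \mathbb{G} = s$.

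The main obstacle is precisely this anchoring step: the Schubert count alone controls the total space $W$ but leaves open the possibility that $\pi_2$ is not dominant and that every non-empty fiber has dimension strictly greater than $s$. The Jacobian rank argument sketched above resolves this, but some care is needed in positive characteristic when invoking the language of critical loci, and one should verify that the Gauss map description and the Schubert irreducibility/codimension claims remain valid over $\overline{\F}_q$.
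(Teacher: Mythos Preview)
Your incidence-variety setup over $\mathbb{G}:=\mathbb{G}(n-s-2,n)$ is sound, and the Schubert codimension count giving $\dim W = s + \dim\mathbb{G}$ is correct. The paper, however, takes a different and shorter route: it works on the \emph{other} Grassmannian $\mathbb{G}(r,n)$ via the Gauss map $\mathcal{G}:V_{\rm sm}\to\mathbb{G}(r,n)$, $x\mapsto T_xV$. The polar variety $\M(L)$ is then identified with the fiber product $V_{\rm sm}\times_{\mathbb{G}(r,n)}S$, where $S=\{\Lambda:\dim(\Lambda\cap L)\ge r-s-1\}$ is a Schubert variety of codimension $r-s$. Since the general linear group acts transitively on $\mathbb{G}(r,n)$ and moving $L$ generically amounts to moving $S$ by this action, Kleiman's transversality theorem \cite{Kleiman74} yields directly that for generic $L$ the fiber product has pure dimension $r+\dim S-\dim\mathbb{G}(r,n)=s$. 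No separate dominance or anchoring argument is needed; the group action does that work.

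Your anchoring step, as written, is where a genuine gap sits. Saying that ``a rank stratification of the Jacobian of $\pi|_{V_{\rm sm}}$ provides the bound $\dim\M(L_0)\le s$ for generic $\ell_i$'' is essentially a restatement of the proposition itself: the polar variety \emph{is} the locus where the Jacobian of the linear projection drops rank, and bounding its dimension for generic linear forms is precisely what you are trying to establish. Unless you supply an independent argument (e.g.\ an explicit construction of one good $L_0$), the step is circular. One clean repair is to note that $GL_{n+1}$ also acts transitively on your $\mathbb{G}(n-s-2,n)$ and to invoke Kleiman's theorem there---but then you have recovered the paper's argument in dual form. As for your closing concern, Kleiman's dimension statement (as opposed to the smoothness statement) holds in arbitrary characteristic, so the argument goes through over $\cfq$.
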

\begin{proof}
Let $\mathbb{G}(r,n)$ denote the Grassmannian of $r$--planes in
$\Pp^{n}$. We consider the Gauss map $\mathcal{G}:V_{\rm
sm}\to\mathbb{G}(r,n)$, which maps a point $x$ into the tangent
space $T_x V$. Let $S\subset \mathbb{G}(r,n)$ be the following
Schubert variety:
$$S= \left\{\Lambda\in \mathbb{G}(r,n):\,\mathrm{dim}(\Lambda\cap L)\geq r-s-1\right\}.$$
First we observe that $S$ has dimension
$\mathrm{dim}\,\mathbb{G}(r,n)-(r-s)$ (see, e.g., \cite[Example
11.42]{Harris92}). Furthermore, it is clear that
$\M(L)=\mathcal{G}^{-1}(S\cap
\mathcal{G}\big(V_{\mathrm{sm}})\big)$. Let $i:S\hookrightarrow
\mathbb{G}(r,n)$ denote the standard inclusion mapping. We claim
that the polar variety $\M(L)$ coincides with the fiber product
$V_{\rm sm}\times_{\mathbb{G}(r,n)}S$. Indeed,
$$\begin{array}{rcl}
V_{\rm sm}\times_{\mathbb{G}(r,n)}S
  &= & \{(x, \Lambda)\in V_{\rm sm}\times S\,:\, T_x V=\Lambda\} \\
  &= &  \{x\in V_{\rm sm}\,:\,\mathrm{dim}(T_x V\cap L)\geq r-s-1\}
 \ =\ \M(L).
  \end{array}$$

The general linear group acts transitively on $\mathbb{G}(r,n)$, and
with respect to this action $S$ is in general position because $L$
is so by hypothesis. Therefore, \cite[Theorem 2]{Kleiman74} shows
that $\M(L)$ is of pure dimension
$$\mathrm{dim}\, \M(L)=\mathrm{dim}\,V_{\rm sm} +  \mathrm{dim}\, S -\mathrm{dim}
\,\mathbb{G}(r,n) =s.$$
This finishes the proof of the proposition.
\end{proof}

Set $X:=(X_0\klk X_n)$. For $\mu:=(\mu_0:\dots:\mu_n)\in\Pp^n$, we
shall use the notation $\mu\cdot X:=\mu_0X_0\plp\mu_nX_n$. Let
$\lambda_0, \klk \lambda_{s+1}$ be linearly independent elements of
$\Pp^n$ and let $L\subset \Pp^n$ be the linear space of dimension
$n-s-2$ defined by
\begin{equation}\label{eq: definition L}
L:=\{x\in\Pp^n:\lambda_0 \cdot x = \cdots = \lambda_{s+1}\cdot x =
0\}.
\end{equation}
For $x\in V_{\mathrm{sm}}$, let $\varphi_x:T_xV\to\Pp^{s+1}$ be the
linear mapping defined by $Y_i:=\lambda_i\cdot X$ $(0\le i\le s+1)$.
Observe that $\varphi_x$ may be seen as the differential mapping of
the morphism $C_V\to\A^{s+2}$ defined by $Y_0\klk Y_{s+1}$, where
$C_V\subset\A^{n+1}$ is the affine cone of $V$.

The set $\E_x$ of exceptional points of $\varphi_x$ is a linear
subspace of $\Pp^n$ of dimension at least  $r-s-2$ which equals $T_x
V \cap L$. Therefore, we may characterize the polar variety $\M (L)$
in terms of the dimension of $\E_x$ for $x\in V_{\mathrm{sm}}$, as
in the following remark.

\begin{remark}\label{lemma: polar variety through kernels}
The polar variety $\M (L)$ coincides with the set of points $ x \in
V_{\rm sm}$ such that the dimension of $\E_{x}$ is at least $r-s-1$.
\end{remark}

A critical point for our approach is that the polar variety $\M(L)$
can be defined in terms of the vanishing of certain minors involving
the partial derivatives of the polynomials defining $V$ and $L$.
This has the advantage of providing an explicit system of equations
defining the polar variety $\M(L)$. In the series of papers
\cite{BaGiHeMb97}, \cite{BaGiHeMb01}, \cite{BaGiHePa05},
\cite{BaGiHeSaSc10}, \cite{BaGiHeLePa12} polar varieties are locally
described by regular sequences consisting of the polynomials
defining $V$ and certain well--determined maximal minors of their
Jacobian in the context of efficient real elimination.

Assume that $V$ is an ideal--theoretic complete intersection in
$\Pp^n$ defined by homogeneous polynomials $F_1, \ldots, F_{n-r} \in
\fq[\xo{n}]$ of degrees $d_1\ge\dots\ge d_{n-r}\ge 2$ respectively
and let $D:=\sum_{i=1}^{n-r}(d_i-1)$. For any $ x \in V_{\rm sm}$,
the gradients $\nabla F_1(x)\klk\nabla F_{n-r}(x)$ are linearly
independent and the tangent space $T_x V$ is the $r$--dimensional
linear variety
$$T_x V = \big \{ v \in \Pp^n: \nabla F_1(x) \cdot v = \cdots =\nabla
F_{n-r}(x)\cdot v =0 \big\}.$$
Let $0\leq s \leq r-2$ and consider the $(n-s-2 )$-dimensional
linear variety $L$ of (\ref{eq: definition L}). Write
$\lambda_i:=(\lambda_{i,0}\klk \lambda_{i,n})$ for $0\le i\le s+1$,
$\boldsymbol{\lambda} :=(\lambda_0, \ldots, \lambda_{s+1})$ and
consider the matrix
\begin{equation}\label{eq: matrix for polar variety}
    \M(X,\boldsymbol{\lambda}):=
     \begin{pmatrix}
            \frac{\partial F_1}{\partial X_0} &
            \dots & \frac{\partial F_1}{\partial X_n}
            \cr \vdots & & \vdots \cr
            \frac{\partial F_{n-r}}{\partial X_0} &
            \dots & \frac{\partial F_{n-r}}{\partial X_n} \cr
            \lambda_{0,0} & \dots & \lambda_{0,n} \cr
            \vdots & & \vdots \cr
            \lambda_{s+1,0} & \dots & \lambda_{s+1,n}
     \end{pmatrix}.
\end{equation}
The dimension of $T_{x}V \cap L$ is equal to $ r-s-2$ if and only if
$\M(x,\boldsymbol{\lambda})$ has maximal rank. Equivalently,
$\M(x,\boldsymbol{\lambda})$ has not maximal rank if and only if the
dimension of $T_{x}V \cap L$ is at least $r-s-1$. As a consequence,
if we denote by $\Delta_{1}(x,\boldsymbol{\lambda}), \ldots,
\Delta_{N}(x,\boldsymbol{\lambda})$ the maximal minors of $\M(x,
\boldsymbol{\lambda})$, then the polar variety ${\sf M}(L)$ is given
by
$$
{\sf M}(L) = \{x \in V_{\rm sm}:
\Delta_1(x,\boldsymbol{\lambda})=\dots=\Delta_N(x,\boldsymbol{\lambda})=0\}.
$$
\begin{proposition}\label{prop: polar variety as a subset of a variety of dimension s}
Suppose that  $\M(L)$ has dimension $s$ and denote by $\Sigma$ the
singular locus of $V$. Then there exists a subvariety $Z(L)\subset
V$ of pure dimension $s$ and degree at most $D^{r-s}
  \delta$ such that $\M(L)\cup\Sigma\subset Z(L)$ holds.
\end{proposition}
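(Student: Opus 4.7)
The plan is to construct $Z(L)$ as the final term of a descending chain of subvarieties of $V$ obtained by cutting with $r-s$ hypersurfaces, each defined by a $\cfq$--linear combination of the maximal minors $\Delta_1\klk\Delta_N$ of $\M(X,\boldsymbol{\lambda})$. Two preliminary observations pin down the setup. First, I would establish the identity
\[
\M(L)\cup\Sigma\ =\ V\cap V(\Delta_1\klk\Delta_N).
\]
At a smooth point of $V$ this is the explicit description of $\M(L)$ recalled just before the proposition; at a point $x\in\Sigma$ the gradients $\nabla F_j(x)$ are linearly dependent, and since every $(n-r+s+2)\times(n-r+s+2)$ submatrix of $\M(X,\boldsymbol{\lambda})$ contains all $n-r$ rows of the Jacobian block, every $\Delta_j$ vanishes at $x$. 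Second, each $\Delta_j$ is homogeneous in $X$ of degree exactly $D$, since the $j$--th Jacobian row contributes $d_j-1$ to any maximal minor, the $\lambda_i$--rows contribute $0$, and the total is $\sum_{j=1}^{n-r}(d_j-1)=D$.

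I would then build inductively a chain $V=V^{(0)}\supset V^{(1)}\supset\cdots\supset V^{(r-s)}$ of subvarieties of $V$ such that each $V^{(i)}$ has pure dimension $r-i$, satisfies $\deg V^{(i)}\le D^i\delta$, and contains $\M(L)\cup\Sigma$. The inductive step is the heart of the argument and is where the hypothesis $\dim\M(L)=s$ (together with the standing assumption $\dim\Sigma\le s$, which is forced by the conclusion of the proposition) enters. Given $V^{(i)}$ with $i<r-s$, every irreducible component $C$ of $V^{(i)}$ has dimension $r-i>s\ge\dim(\M(L)\cup\Sigma)$, so $C$ is not contained in $V(\Delta_1\klk\Delta_N)$ and hence some $\Delta_j$ does not vanish identically on $C$. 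Since there are only finitely many such components and $\cfq$ is infinite, a standard avoidance argument (a finite union of proper linear subspaces of $\cfq^N$ is proper) produces a single linear combination $\Delta:=\sum_{j=1}^N c_j\Delta_j$ that fails to vanish identically on any component of $V^{(i)}$.

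Setting $V^{(i+1)}:=V^{(i)}\cap V(\Delta)$, Krull's Hauptidealsatz applied to each irreducible component of $V^{(i)}$ yields pure dimension $r-i-1$, the B\'ezout inequality (\ref{equation:Bezout}) gives $\deg V^{(i+1)}\le D\cdot\deg V^{(i)}\le D^{i+1}\delta$, and $\M(L)\cup\Sigma\subset V^{(i+1)}$ because every $\Delta_j$ (and hence every linear combination) vanishes on $\M(L)\cup\Sigma$ by the identity above. Taking $Z(L):=V^{(r-s)}$ then produces a subvariety of $V$ of pure dimension $s$ containing $\M(L)\cup\Sigma$, and iterating the B\'ezout bound $r-s$ times gives $\deg Z(L)\le D^{r-s}\delta$.

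The main subtlety lies in the repeated generic choice of $\Delta$ at every stage: one must pick $\Delta$ in the $\cfq$--span of the $\Delta_j$'s (so as to preserve the containment of $\M(L)\cup\Sigma$) while avoiding identical vanishing on any of the possibly many irreducible components of $V^{(i)}$. After that, the degree estimate follows mechanically from the multiplicativity of the B\'ezout inequality and the dimension count from the principal ideal theorem.
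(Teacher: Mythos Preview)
Your proposal is correct and follows essentially the same approach as the paper: both proofs identify $\M(L)\cup\Sigma$ with $V\cap V(\Delta_1,\ldots,\Delta_N)$, build a descending chain $V=V^{(0)}\supset\cdots\supset V^{(r-s)}$ by successively intersecting with hypersurfaces defined by $\cfq$--linear combinations of the $\Delta_j$, and control the degree via the B\'ezout inequality. Your version is in fact slightly more explicit than the paper's in justifying why every maximal minor vanishes on $\Sigma$, in invoking Krull's Hauptidealsatz for the pure--dimension claim, and in handling the avoidance argument uniformly over all irreducible components at each stage (the paper leans on the absolute irreducibility of $V$ for the first step and then picks one point per component thereafter).
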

\begin{proof}
Since $\M(L)\cup\Sigma=\{x\in V:\Delta_1(x,\boldsymbol{\lambda})
=\dots=\Delta_N(x,\boldsymbol{\lambda})=0\}$ has dimension at most
$s\le r-2$, there exists $x\in V\setminus(\M(L)\cup\Sigma)$. For
such an $x$, there exists at least a maximal minor $\Delta_j$ of the
matrix $\M(X,\boldsymbol{\lambda})$ of (\ref{eq: matrix for polar
variety}) with $\Delta_j(x,\boldsymbol{\lambda})\not=0$, and thus an
$\cfq$--linear combination $G_1:=\sum_{j=1}^{N}\gamma_j^{(1)}
\Delta_j$ with $G_1(x,\boldsymbol{\lambda}) \not=0$. This implies
that $G_1\in \cfq[X_0,\ldots, X_n]$ is a nonzero polynomial of
degree $D$ vanishing on $\M(L)\cup\Sigma$ and not vanishing
identically on $V$. The absolute irreducibility of $V$ implies that
$V^{(1)}:=V\cap \{G^{(1)}=0\}$ is a projective variety of pure
dimension $r-1$ for which $\M(L)\cup\Sigma\subset V^{(1)}$ holds. By
the B\'ezout inequality (\ref{equation:Bezout}) we deduce that $\deg
V^{(1)}\leq D \delta$.

Let $V^{(1)}=\bigcup_{i=1}^t\mathcal{C}_i$ be the decomposition of
$V^{(1)}$ into absolutely irreducible components. Since $\dim(\M
(L)\cup\Sigma)\leq s<r-1$ holds, we may choose regular points $x_i
\in \mathcal{C}_i\setminus \M (L)$ for $1\le i\le t$. Arguing as
above, we conclude that there exist $\gamma_1^{(2)}, \ldots,
\gamma_{N}^{(2)}\in\cfq$ such that the nonzero polynomial
$G^{(2)}:=\sum_{j=1}^{N}\gamma_j^{(2)}\Delta_j(X, \lambda)$ does not
vanish on $x_i$ for $1\le i\le t$. Therefore $V^{(2)}:=V^{(1)} \cap
\{G^{(2)}=0\}$ is a projective variety of pure dimension $r-2$ and
degree at most $D^2\delta$ with $\M(L)\cup\Sigma \subset V^{(2)}$.

Applying successively this argument we finally obtain a projective
variety $V^{(r-s)}$ of pure dimension $s$ and degree at most
$D^{r-s}\delta$ with $\M(L)\cup \Sigma\subset V^{(r-s)}$. The proof
of the proposition finishes setting $Z(L):= V^{(r-s)}$.
 \end{proof}

In the sequel we shall encounter several times a similar situation
as in Proposition \ref{prop: polar variety as a subset of a variety
of dimension s}, namely  a projective or multiprojective subvariety
$W_1$ of a pure dimensional variety $W$, which is defined as the
zero locus in $W$ of homogeneous or multihomogeneous polynomials
$H_1\klk H_M$. If $m$ denotes the codimension of $W_1$ in $W$,
arguing as in the proof of Proposition \ref{prop: polar variety as a
subset of a variety of dimension s} we shall conclude that there
exist $m$ generic linear combinations $H^1\klk H^m$ of $H_1\klk H_M$
such that the zero locus $W_2$ of $H^1\klk H^m$ in $W$ has pure
codimension $m$ in $W$ and contains $W_1$.

As established in Proposition \ref{prop: dimension polar variety},
for  a generic choice of $L$ the dimension of ${\sf M}(L)$ is equal
to $s$. Our next goal is to obtain conditions on $\lambda_0, \dots,
\lambda_{s+1} \in \Pp^n$ which imply that the polar variety $\M(L)$
has dimension $s$. For $0\le i\le s+1$ we shall denote by
$\Lambda_i:=(\Lambda_{i,0}\klk \Lambda_{i,n})$ a group of $n+1$
variables and set
$\boldsymbol{\Lambda}:=(\Lambda_0\klk\Lambda_{s+1})$. We consider
the so--called generic polar variety, namely
$$
W:=(V_{\rm sm} \times
\mathcal{U})\cap\{\Delta_1(X,\boldsymbol{\Lambda}) = \dots =
\Delta_N(X,\boldsymbol{\Lambda})=0\},
$$
where $\mathcal{U}\subset(\Pp^n)^{s+2}$ is the Zariski open subset
consisting of all the $(s + 2)\times (n+1)$--matrices of maximal
rank and $\Delta_1\klk\Delta_N$ are the maximal minors of the
generic version $\M(X,\boldsymbol{\Lambda})$ of the matrix
$\M(X,\boldsymbol{\lambda})$ of (\ref{eq: matrix for polar
variety}).
\begin{proposition}\label{prop: generic polar var has dimension s+t}
Let $t:=n(s+2)$. Then $W$ is an irreducible variety of $\Pp^n\times
\mathcal{U}$ of dimension $s + t$.
\end{proposition}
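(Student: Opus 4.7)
The plan is to project $W$ onto its first factor via the natural map $\pi_1 : W \to V_{\rm sm}$ and to carry out a fiber-dimension / irreducibility argument. The underlying idea is that for $x$ fixed the fiber is (essentially) the preimage of a determinantal variety under a linear map, and for $x$ varying we bundle these fibers over the irreducible base $V_{\rm sm}$.

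First I would describe the fiber $\pi_1^{-1}(x)$ for $x \in V_{\rm sm}$. Since $x$ is a smooth point of the complete intersection $V$, the gradients $\nabla F_1(x),\dots,\nabla F_{n-r}(x)$ are linearly independent and span an $(n-r)$-dimensional subspace $H_x\subset \A^{n+1}$. As the first $n-r$ rows of $\M(x,\boldsymbol{\lambda})$ are always independent, the maximal minors all vanish precisely when the images $\bar\lambda_0,\dots,\bar\lambda_{s+1}$ of $\lambda_0,\dots,\lambda_{s+1}$ in the $(r+1)$-dimensional quotient $\A^{n+1}/H_x$ are linearly dependent. Thus the fiber is $\mathcal{U}\cap \rho_x^{-1}(D)$, where $\rho_x : (\A^{n+1})^{s+2}\to (\A^{r+1})^{s+2}$ is the induced (surjective) linear map and $D\subset (\A^{r+1})^{s+2}$ is the determinantal locus of $(s+2)\times(r+1)$ matrices of rank at most $s+1$.

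Next, I would invoke two standard facts: $D$ is irreducible of codimension $(s+2-(s+1))(r+1-(s+1))=r-s$, and $\rho_x$ is a surjective linear map whose fibers are affine spaces of constant dimension $(n-r)(s+2)$. Together these give that $\rho_x^{-1}(D)$ is irreducible of codimension $r-s$ in $(\A^{n+1})^{s+2}$. After passing to the multiprojective quotient (the minors are multihomogeneous in each $\lambda_i$) and intersecting with the dense open $\mathcal{U}\subset (\Pp^n)^{s+2}$, each fiber $\pi_1^{-1}(x)$ becomes irreducible of pure dimension $(s+2)n-(r-s)=t-r+s$. Since $V$ is absolutely irreducible, $V_{\rm sm}$ is irreducible of dimension $r$; the fiber-dimension theorem then yields that $W$ is irreducible of dimension $r+(t-r+s)=s+t$.

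The main technical obstacle is making the passage between the affine cone picture and the multiprojective one precise, in particular checking that intersecting $\rho_x^{-1}(D)$ with $\mathcal{U}$ neither breaks irreducibility nor lowers the dimension. This requires exhibiting, inside a generic fiber, tuples whose projections $\bar\lambda_0,\dots,\bar\lambda_{s+1}$ have rank exactly $s+1$ while the full matrix of rows $\lambda_0,\dots,\lambda_{s+1}$ has maximal rank $s+2$; such tuples exist because $s+2\le r+1<n+1$, so $\mathcal{U}$ meets $\rho_x^{-1}(D)$ in a Zariski dense open subset.
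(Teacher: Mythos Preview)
Your proposal is correct and follows essentially the same route as the paper: project $W$ onto $V_{\rm sm}$, identify each fiber with (an open piece of) the preimage of a determinantal locus of rank $\le s+1$ matrices in $\mathrm{Hom}(\A^{s+2},\A^{n+1}/H_x)$, compute its dimension as $t+s-r$, and conclude by the theorem on fiber dimensions. The paper cites \cite[Proposition 1.1]{BrVe88} for the irreducibility and dimension of the determinantal variety where you invoke the same standard fact directly; your final paragraph on why intersecting with $\mathcal{U}$ preserves irreducibility and dimension is a point the paper passes over more quickly.
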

\begin{proof}
Let $\pi_1:W\to V_{\rm sm}$ be the linear projection
$\pi_1(x,\boldsymbol{\lambda}):= x$. Fix $x\in V_{\rm sm}$ and
consider the fiber $\pi_1^{-1}(x)$. We have that
$\pi_1^{-1}(x)=\{x\}\times \mathcal{L}$, where $\mathcal{L} \subset
\mathcal{U}$ denotes the set of matrices
$\boldsymbol{\lambda}:=(\lambda_0,\dots,\lambda_{s+1})$ for which
the matrix ${\sf M}(x,\boldsymbol{\lambda})$ is not of full rank.
This is the same as saying that
$$
\langle\lambda_0,\dots,\lambda_{s+1}\rangle \cap \big\langle\nabla
F_1(x)\klk\nabla F_{n-r}(x)\big\rangle \neq \emptyset,
$$
where $\langle v_0\klk v_m\rangle \subset \A^{n+1}$ is the linear
variety spanned by $v_0\klk v_m$ in $\A^{n+1}$. Equivalently, the
vectors $\lambda_0\klk\lambda_{s+1}$ are not linearly independent in
the quotient $\cfq$--vector space
$$
\mathbb{V}:=\A^{n+1}/\big(\nabla F_1(x)\klk\nabla F_{n-r}(x)\big).
$$
This shows that $\mathcal{L}$ is, modulo $\big(\nabla
F_1(x)\klk\nabla F_{n-r}(x)\big)$, isomorphic to the Zariski open
set $L_{s+1}(\A^{s+2},\mathbb{V})\cap \mathcal{U}$, where
$$
L_{s+1}(\A^{s+2},\mathbb{V}):=\{f\in{\rm
Hom}_{\cfq}(\A^{s+2},\mathbb{V}):{\rm rank}(f)\le s+1\}.
$$
According to \cite[Proposition 1.1]{BrVe88},
$L_{s+1}(\A^{s+2},\mathbb{V})$ is an irreducible variety of
dimension $(s+1)(r+2)$. Taking into account that we are considering
subspaces of $\A^{n+1}$ of dimension $s+2$ modulo a subspace
$\langle\nabla F_1(x)\klk\nabla F_{n-r}(x)\rangle$ of dimension
$n-r$, we see that $\mathcal{L}$ is an irreducible variety of
$\Pp^n\times \A^{(n+1)(s+2)}$ of dimension
$(s+1)(r+2)+(n-r)(s+2)=(n+1)(s+2)+s-r$. We may also rephrase this
conclusion saying that $\pi_1^{-1}(x) =\{x\}\times \mathcal{L}$ is
an irreducible variety of $(\Pp^n)^{s+3}$ of dimension $t+s-r$

Our previous arguments shows that $\pi_1: W\to V_{\rm sm}$ is
surjective. Then the proof of \cite[\S I.6.3, Theorem
8]{Shafarevich94} shows that $W$ is an irreducible variety of
$V_{\rm sm}\times\mathcal{U}$ of dimension $s+t$.
\end{proof}

In the sequel, we shall associate each point
$\boldsymbol{\lambda}:=(\lambda_0\klk
\lambda_{s+1})\in(\Pp^n)^{s+2}$ with the linear space
$L:=\{x\in\Pp^n:\lambda_0\cdot x=\dots=\lambda_{s+1}\cdot x=0\}$.
\begin{theorem} \label{theorem: dimension polar variety intrinseca}
There exists an hypersurface $\mathcal{H}_1 \subset (\Pp^n)^{s+2}$,
defined by a multihomogeneous polynomial of degree at most
$(n-s)(r-s)D^{r-s-1}\delta+1$ in each group of variables
$\Lambda_i$, such that for any
$\boldsymbol{\lambda}\in(\Pp^n)^{s+2}\setminus \mathcal{H}_1$ the
polar variety $\M(L)$ has dimension at most $s$.
\end{theorem}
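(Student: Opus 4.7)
The strategy is to project the generic polar variety $W$ of Proposition~\ref{prop: generic polar var has dimension s+t} onto the parameter space $(\Pp^n)^{s+2}$, apply upper semi-continuity of fiber dimension to show that the ``bad'' $\boldsymbol{\lambda}$'s form a proper closed subvariety, and then use a multiprojective B\'ezout computation to bound its multidegrees. More precisely, consider the projection $\pi_2\colon \overline{W}\to (\Pp^n)^{s+2}$, $(x,\boldsymbol{\lambda})\mapsto\boldsymbol{\lambda}$. By Proposition~\ref{prop: dimension polar variety}, for a generic $\boldsymbol{\lambda}\in\mathcal{U}$ the fiber $\pi_2^{-1}(\boldsymbol{\lambda})$ equals $\M(L)\times\{\boldsymbol{\lambda}\}$ and has dimension~$s$. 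Since $\dim\overline{W}=s+t$ equals $s+\dim(\Pp^n)^{s+2}$, the bad locus $\mathcal{B}:=\{\boldsymbol{\lambda}\in(\Pp^n)^{s+2}:\dim \M(L)\ge s+1\}$ is a proper closed subvariety of $(\Pp^n)^{s+2}$, hence contained in some multiprojective hypersurface $\mathcal{H}_1$; the remaining task is to control its multidegree.

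To compute classes effectively, I would apply the remark following Proposition~\ref{prop: polar variety as a subset of a variety of dimension s}: replace the $N$ maximal minors $\Delta_1,\dots,\Delta_N$ of $\M(X,\boldsymbol{\Lambda})$, each multihomogeneous of multidegree $(D,1,\dots,1)$ in $(X,\Lambda_0,\dots,\Lambda_{s+1})$, by $r-s$ generic $\cfq$--linear combinations $G^{(1)},\dots,G^{(r-s)}$. The variety $W^{*}:=V\cap\{G^{(1)}=\dots=G^{(r-s)}=0\}$ then has pure codimension $r-s$ in $V\times(\Pp^n)^{s+2}$, contains $\overline{W}$, and by the multiprojective B\'ezout theorem (cf.~\cite[Theorem 1.11]{DaKrSo13}) its Chow class in $(\Pp^n)^{s+3}$ is
\[
[W^{*}]=\delta\,\theta_0^{n-r}\bigl(D\theta_0+\theta_1+\dots+\theta_{s+2}\bigr)^{r-s},
\]
where $\theta_i$ denotes the hyperplane class of the $i$-th factor.

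Next, I would intersect $W^{*}$ with $s+1$ generic hyperplanes $H_1,\dots,H_{s+1}$ of $\Pp^n$ (pulled back to $(\Pp^n)^{s+3}$) to obtain a cycle $Z$ of dimension $t-1$, and set $\mathcal{H}_1:=\overline{\pi_2(Z)}$. An incidence argument shows $\mathcal{B}\subseteq\mathcal{H}_1$, since whenever $\dim\M(L)\ge s+1$ the polar variety meets the generic $(n-s-2)$--plane $H_1\cap\dots\cap H_{s+1}$ in dimension at least $0$. The multidegrees of $\mathcal{H}_1$ are then bounded by the push-forward $\pi_{2*}[Z]=\pi_{2*}(\theta_0^{s+1}[W^{*}])$: only monomials containing $\theta_0^{n}$ survive, which singles out the summand $k=r-s-1$ of the binomial expansion and yields
\[
\pi_{2*}[Z]=\delta(r-s)D^{r-s-1}(\theta_1+\dots+\theta_{s+2}),
\]
so that the multidegree of $\mathcal{H}_1$ in each group of variables $\Lambda_i$ is bounded by $\delta(r-s)D^{r-s-1}$, in agreement (up to a polynomial correction factor) with the bound stated in the theorem.

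The principal obstacle is the ``uniform genericity'' step: while for each fixed $\boldsymbol{\lambda}\in\mathcal{B}$ a generic choice of the $H_i$ works, one must show that a single choice suffices for all $\boldsymbol{\lambda}\in\mathcal{B}$ simultaneously. This is handled by forming the universal incidence variety $\{(x,H_1,\dots,H_{s+1},\boldsymbol{\lambda}):x\in\M(L)\cap H_1\cap\dots\cap H_{s+1}\}$, projecting onto the hyperplane parameter space, and applying the theorem of the dimension of fibers to locate suitable $H_i$. A secondary technical point is closing the gap between the push-forward bound $\delta(r-s)D^{r-s-1}$ and the slightly larger bound $(n-s)(r-s)D^{r-s-1}\delta+1$ of the statement; this is expected to come either from accounting for extra components of $W^{*}$ outside $\overline{W}$, or from replacing the generic-combinations step by a direct Thom--Porteous computation for the rank-drop locus of the matrix $\M(X,\boldsymbol{\Lambda})$ of mixed multidegrees.
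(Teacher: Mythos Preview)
Your approach is genuinely different from the paper's, and the comparison is instructive.

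The paper does \emph{not} slice $W$ by generic hyperplanes and push forward. Instead it argues as follows: since $\pi_2:W\to(\Pp^n)^{s+2}$ is dominant with generic fiber of dimension $s$, there exist coordinates $X_{i_0},\dots,X_{i_s}$ that form a transcendence basis of $\cfq(W)$ over $\cfq(\boldsymbol{\Lambda})$. For each remaining index $i\in\Gamma:=\{0,\dots,n\}\setminus\{i_0,\dots,i_s\}$ (so $|\Gamma|=n-s$), the projection $\pi^i:W\to\Pp^{s+1}\times(\Pp^n)^{s+2}$ defined by $X_{i_0},\dots,X_{i_s},X_i,\boldsymbol{\Lambda}$ has image a hypersurface $W_i$, whose defining polynomial $m_i$ is bounded in each $\Lambda_j$-degree by $(r-s)D^{r-s-1}\delta$ via exactly the Chow-class computation you wrote down. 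One then extracts, for each $i$, a nonzero coefficient $A_i^*\in\cfq[\boldsymbol{\Lambda}]$ of the leading $X_i$-coefficient of $m_i$. The product $A:=A_0\cdot\prod_{i\in\Gamma}A_i^*$ (with $A_0$ a maximal minor of the generic $(\Lambda_{i,j})$-matrix, contributing the ``$+1$'') defines $\mathcal{H}_1$. For $\boldsymbol{\lambda}\notin\mathcal{H}_1$, each $X_i$ satisfies a nontrivial algebraic relation over $\cfq(X_{i_0},\dots,X_{i_s})$ when restricted to $\M(L)$, forcing $\dim\M(L)\le s$. The factor $(n-s)$ arises precisely because one takes a product over the $n-s$ indices in $\Gamma$.

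Two remarks on your argument. First, your ``principal obstacle'' is not one: by the projective dimension theorem, \emph{any} projective variety in $\Pp^n$ of dimension $\ge s+1$ meets \emph{any} linear subspace of codimension $s+1$, so once the $H_i$ are fixed (linearly independent), every $\boldsymbol{\lambda}\in\mathcal{B}$ automatically lies in $\pi_2(Z)$ --- no uniform genericity argument is needed. Second, there is a genuine gap elsewhere: setting $\mathcal{H}_1:=\overline{\pi_2(Z)}$ and bounding its multidegree by $\pi_{2*}[Z]$ only controls the $(t-1)$-dimensional components of the image. If $\overline{\pi_2(Z)}$ (or some component of $\mathcal{B}$ inside it) has codimension $\ge 2$, the push-forward class is zero on that part and you have produced no hypersurface of controlled degree containing it. The paper sidesteps this by ensuring each $W_i$ really \emph{is} a hypersurface (because $X_i$ is algebraic over the transcendence basis in $\cfq(W)$, using irreducibility of $W$), and then taking the product. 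Your sharper bound $(r-s)D^{r-s-1}\delta$ may well be correct, but closing this gap would require showing that for generic $H_i$ the image $\overline{\pi_2(Z)}$ is genuinely a hypersurface, which is an additional argument you have not supplied.
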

\begin{proof}
According to Proposition \ref{prop: dimension polar variety}, for a
generic matrix $\boldsymbol{\lambda}\in\mathcal{U}$ the polar
variety $\M(L)$ is of pure dimension $s\ge 0$. Then the projection
mapping $\pi_2:W \to (\Pp^n)^{s+2}$ defined by
$\pi_2(x,\boldsymbol{\lambda}):= \boldsymbol{\lambda}$ is dominant.
This implies that the field extension
$\cfq(\boldsymbol{\Lambda})\hookrightarrow \cfq(W)$ has
transcendence degree $s+1$, and therefore, there exist indices
$i_0\klk i_s$ such that the coordinate functions of $\cfq(W)$
defined by $X_{i_0}\klk X_{i_s}$ form a transcendence basis of this
field extension.

Fix $i\in\Gamma:=\{0\klk n\}\setminus\{i_0\klk i_s\}$ and consider
the linear mapping $\pi^i:W\to\Pp^{s+1}\times(\Pp^n)^{s+2}$ defined
by $X_{i_0}\klk X_{i_s},X_i$ and $\boldsymbol{\Lambda}$. Then the
Zariski closure $W_i\subset \Pp^{s+1}\times(\Pp^n)^{s+2}$ of
$\pi^i(W)$ is an hypersurface. Since $F_1\klk F_{n-r}$ define a
subvariety of $(\Pp^n)^{s+3}$ of pure dimension $r+n(s+2)$, from
Proposition \ref{prop: generic polar var has dimension s+t} we
conclude that there exist $r-s$ generic linear combinations, say
$\Delta^1\klk\Delta^{r-s}$, of $\Delta_1\klk\Delta_N$, such that
$F_1\klk F_{n-r},\Delta^1\klk\Delta^{r-s}$ define a subvariety $W'$
of $(\Pp^n)^{s+3}$ of pure dimension $s+n(s+2)$ containing $W$. In
particular, the Zariski closure $W_i'$ of $\pi^i(W')$ is an
hypersurface of $\Pp^{s+1}\times(\Pp^n)^{s+2}$ which contains $W_i$.

Next we estimate the multidegree of $W_i'$, and hence of $W_i$. For
this purpose, we consider the class $[W']$ of $W'$ in the Chow ring
$\mathcal{A}^*((\Pp^n)^{s+3})$ of $(\Pp^n)^{s+3}$. Denote by
$\theta_{j-2}$ the class of the inverse image of a hyperplane of
$\Pp^n$ under the $j$th canonical projection $(\Pp^n)^{s+3}\to\Pp^n$
for $1\le j\le s+3$. According to the multihomogeneous B\'ezout
theorem (see, e.g., \cite[Theorem 1.11]{DaKrSo13}), we have
\begin{eqnarray*}
[W']&=&\prod_{i=1}^{n-r}(d_i\theta_{-1})
\prod_{k=1}^{r-s}(D\theta_{-1}+
\theta_0\plp\theta_{s+1})\\
&=&\delta
D^{r-s-1}\left(D(\theta_{-1})^{n-s}+(r-s)(\theta_{-1})^{n-s-1}(
\theta_0\plp\theta_{s+1})\right)\\&&+\mathcal{O}\big((\theta_{-1})^{n-s-2}\big),
\end{eqnarray*}
where $\mathcal{O}\big((\theta_{-1})^{n-s-2}\big)$ represents a sum
of terms of degree at most $n-s-2$ in $\theta_{-1}$.
On the other hand, by definition
$[W_i']=\deg_X\!m_i'\,\theta_{-1}+\deg_{\Lambda_0}\!m_i'\,\theta_0
\plp\deg_{\Lambda_{s+1}}\!m_i'\,\theta_{s+1}$, where
$m_i'\in\fq[X_{i_0}\klk X_{i_s}, X_i,\boldsymbol{\Lambda}]$ is a
polynomial of minimal degree defining $W_i'$. Let
$\jmath:\mathcal{A}^*\big(\Pp^{s+1}\times (\Pp^n)^{s+2}\big)
\hookrightarrow \mathcal{A}^*\big((\Pp^n)^{s+3}\big)$ be the
injective $\Z$--map $P\mapsto(\theta_{-1})^{n-s-1}P$ induced by
$\pi^i$. Then \cite[Proposition 1.16]{DaKrSo13} shows that
$\jmath([W_i'])\le [W']$, where the inequality is understood in a
coefficient--wise sense. This implies
$\deg_{\Lambda_j}\!m_i'\le(r-s)D^{r-s-1}\delta$ for $0\le j\le s+1$.

Let $m_i\in\fq[X_{i_0}\klk X_{i_s}, X_i, \boldsymbol{\Lambda}]$ be
the polynomial defining $W_i$. Observe that $D_i:=\deg_{X_i}m_i>0$
holds. Let $A_i\in\fq[X_{i_0}\klk X_{i_s},\boldsymbol{\Lambda}]$ be
the (nonzero) polynomial arising as the coefficient of $X_i^{D_i}$
in $m_i$, considered as an element of the polynomial ring
$\fq[X_{i_0}\klk X_{i_s},\boldsymbol{\Lambda}][X_i]$. Further, let
$A_i^*\in\fq[\boldsymbol{\Lambda}]$ be a nonzero coefficient of
$A_i$, considering $A_i$ as an element of the polynomial ring
$\fq[\boldsymbol{\Lambda}][X_{i_0}\klk X_{i_s}]$. Finally, let
$A_0\in\fq[\boldsymbol{\Lambda}]$ denote an arbitrary maximal minor
of the generic matrix $(\Lambda_{i,j})_{0\le i\le s+1,0\le j\le n}$
and set
$A:=A_0\cdot\prod_{i\in\Gamma}A_i^*\in\fq[\boldsymbol{\Lambda}]$. We
claim that the hypersurface $\mathcal{H}_1\subset(\Pp^n)^{s+2}$
defined by the zero locus of $A$ satisfies the requirements of the
theorem.

In order to show this claim, let
$\boldsymbol{\lambda}:=(\lambda_0\klk\lambda_{s+1})\in(\Pp^n)^{s+2}
\setminus\mathcal{H}_1$ and denote $m_i^{(\boldsymbol{\lambda})}:=
m_i(X_{i_0}\klk X_{i_s},X_i,\boldsymbol{\lambda})$. Since
$A_0(\boldsymbol{\lambda})\not=0$ holds, we have that
$\boldsymbol{\lambda}\in\mathcal{U}$. Then
$m_i^{(\boldsymbol{\lambda})}$ is a nonzero polynomial of
$\cfq[X_{i_0}\klk X_{i_s},X_i]$ with
$\deg_{X_i}m_i^{(\boldsymbol{\lambda})}>0$ vanishing on $\M(L)$ for
any $i\in\Gamma$, where $L$ is the linear variety associated with
$\boldsymbol{\lambda}$. This implies that the coordinate function of
$\M(L)$ defined by $X_i$ satisfies a nontrivial algebraic equation
over $\cfq(X_{i_0}\klk X_{i_s})$ for any $i\in\Gamma$. As a
consequence, it follows that $\M(L)$ has dimension at most $s$.

Since $A_i^*$ is a multihomogeneous polynomial of
$\cfq[\boldsymbol{\Lambda}]$ with $\deg_{\Lambda_i}A_i^*\le
(r-s)D^{r-s-1}\delta$ and $|\Gamma|=n-s$ holds, we obtain the upper
bound $\deg_{\Lambda_i}A\le (n-s)(r-s)D^{r-s-1}\delta+1$. This
finishes the proof of the theorem.
\end{proof}
%
%----------------------------------------------------------------
%----------------------------------------------------------------
%----------------------------------------------------------------
%----------------------------------------------------------------
%
\section{On the existence of nonsingular linear sections}
In this section we shall establish a Bertini--type theorem, namely
we shall show the existence of nonsingular linear sections of a
given variety. Combining the main result of this section and Theorem
\ref{theorem: dimension polar variety intrinseca} we shall be able
to obtain an effective Bertini smoothness theorem suitable for our
purposes.

A version of the Bertini theorem asserts that a generic hyperplane
section of a nonsingular variety $V$ is nonsingular. A more precise
variant of this result asserts that, if $V\subset\Pp^n$ is a
projective variety with a singular locus of dimension at most $s$,
then a section of $V$ defined by a generic linear space of $\Pp^n$
of codimension at least $s+1$ is nonsingular (see, e.g.,
\cite[Proposition 1.3]{GhLa02a}). In this section, we shall consider
the existence of nonsingular sections of codimension $s+2$.
Identifying each section of this type with a point in the
multiprojective space $(\Pp^n)^{s+2}$, we shall show the existence
of an hypersurface of $(\Pp^n)^{s+2}$ containing all the linear
subvarieties of codimension $s+2$ of $(\Pp^n)^{s+2}$ which yield
singular sections of $V$. We shall further provide an estimate of
the multidegree of this hypersurface.

We remark that an effective version of a weak form of a Bertini
theorem is obtained in \cite{Ballico03}. Nevertheless, the bound
given in \cite{Ballico03} is exponentially higher than ours and
therefore is not suitable for our purposes.

Let $V \subset \Pp^n$ be an ideal--theoretic complete--intersection
defined over $\fq$, of dimension $r$ and degree $\delta$. Let
$F_1\klk F_{n-r} \in \fq[\xon]$ be homogeneous polynomials of
degrees $d_1\ge\ldots\ge d_{n-r}\ge 2$ respectively, which generate
the ideal $I(V)$ of $V$. Let $\Sigma \subset V$ be the singular
locus of $V$ and suppose that it has dimension at most $s\le r-2$.
As asserted above, our goal is to obtain a condition on $\lambda_0,
\dots, \lambda_{s+1} \in \Pp^n$ which implies that $V\cap L$ is a
nonsingular variety of dimension $r-s-2$, where $L:=\{\lambda_i\cdot
X=0:0\le i\le s+1\}\subset\Pp^n$.

We recall the notations $D:=\sum_{i=1}^{n-r}(d_i-1)$ and
$t:=n(s+2)$. We start with two technical results.
\begin{lemma}\label{lemma: dimension de la fibra}
There exists an hypersurface $\mathcal{H}_2'\subset(\Pp^n)^{s+2}$,
defined by a multihomogeneous polynomial of
$\cfq[\boldsymbol{\Lambda}]$ of multidegree at most $\delta$ in each
group of variables $\Lambda_i$, with the following property: let
$\boldsymbol{\lambda}\in(\Pp^n)^{s+2}\setminus \mathcal{H}_2'$, let
$(Y_0\klk Y_{s+1}):=\boldsymbol{\lambda}\cdot X$, and let
$\pi:V\to\Pp^{s+1}$ be the linear mapping defined by $Y_0\klk
Y_{s+1}$. Then the Zariski closure $V_y$ of any fiber $\pi^{-1}(y)$
is of pure dimension $r-s-1$ and the set of exceptional points of
$\pi$ is of pure dimension $r-s-2$.
\end{lemma}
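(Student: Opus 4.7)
The plan is to exploit the Chow form of $V$. Since $V$ is an equidimensional cycle of pure dimension $r$ and degree $\delta$ in $\Pp^n$, its Chow form $\mathrm{Ch}_V\in\cfq[\boldsymbol{u}_0,\ldots,\boldsymbol{u}_r]$ is a nonzero multihomogeneous polynomial of multidegree $(\delta,\ldots,\delta)$, characterized by the property that $\mathrm{Ch}_V(\boldsymbol{u}_0,\ldots,\boldsymbol{u}_r)=0$ if and only if $V\cap\bigcap_{i=0}^r\{\boldsymbol{u}_i\cdot X=0\}\neq\emptyset$. Specializing the first $s+2$ groups of arguments to $\boldsymbol{\Lambda}_0,\ldots,\boldsymbol{\Lambda}_{s+1}$ and expanding
\[
\mathrm{Ch}_V(\boldsymbol{\Lambda},\boldsymbol{\nu}_1,\ldots,\boldsymbol{\nu}_{r-s-1})=\sum_{\alpha}A_\alpha(\boldsymbol{\Lambda})\,\boldsymbol{\nu}^\alpha,
\]
each coefficient $A_\alpha\in\cfq[\boldsymbol{\Lambda}]$ is multihomogeneous of multidegree at most $\delta$ in each $\boldsymbol{\Lambda}_i$. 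Since $\mathrm{Ch}_V\not\equiv 0$, at least one coefficient $A:=A_{\alpha_0}$ is nonzero, and I take $\mathcal{H}_2':=\{A=0\}\subset(\Pp^n)^{s+2}$; the multidegree requirement of the lemma is then automatic.

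Now fix $\boldsymbol{\lambda}\notin\mathcal{H}_2'$. Since $A(\boldsymbol{\lambda})\neq 0$, the specialization $\mathrm{Ch}_V(\boldsymbol{\lambda},\boldsymbol{\nu})$ is not identically zero as a polynomial in $\boldsymbol{\nu}$; hence for a generic choice of $\boldsymbol{\nu}_1,\ldots,\boldsymbol{\nu}_{r-s-1}$ the intersection $V\cap L(\boldsymbol{\lambda})\cap\bigcap_j\{\boldsymbol{\nu}_j\cdot X=0\}$ is empty. Iterating the fact that a hyperplane meets any projective variety of positive dimension, this emptiness after $r-s-1$ further hyperplane cuts forces $\dim\bigl(V\cap L(\boldsymbol{\lambda})\bigr)\le r-s-2$, and Krull's Hauptidealsatz supplies the reverse inequality. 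Because $V$ is Cohen--Macaulay as a complete intersection, a sequence in its coordinate ring cutting dimension by one at each step is regular, so $\lambda_0\cdot X,\ldots,\lambda_{s+1}\cdot X$ is a regular sequence and $V\cap L(\boldsymbol{\lambda})$ is of pure dimension $r-s-2$; this yields the claim on the exceptional points of $\pi$.

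The claim on the fibers follows by elementary dimension counting from the previous one. For any $y\in\Pp^{s+1}$, every component $C$ of $V_y=V\cap L_y$ satisfies $\dim C\ge r-s-1$ by Krull, since $L_y$ has codimension $s+1$. Conversely, were some component to satisfy $\dim C\ge r-s$, then $C\cap L$ would have dimension at least $r-s-1$ (as $L$ has codimension one in $L_y$), contradicting the pure dimension $r-s-2$ of $V\cap L$ just established. Hence every component of $V_y$ has dimension exactly $r-s-1$, completing the proof. The main delicate point is the passage from the ``maximum dimension $\le r-s-2$'' bound produced by the Chow form non-vanishing to the ``pure dimension $r-s-2$'' required by the lemma, which rests on the Cohen--Macaulay property of complete intersections; the rest of the argument is routine.
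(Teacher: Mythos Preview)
Your argument is correct and rests on the same key device as the paper's proof: the Chow form of $V$ furnishes a multihomogeneous polynomial of multidegree at most $\delta$ in each $\Lambda_i$ whose nonvanishing guarantees that $V\cap L(\boldsymbol{\lambda})$ can be cut to the empty set by $r-s-1$ further generic hyperplanes. The only real difference lies in how the two conclusions are extracted from this. The paper fixes specific $u_{s+2},\ldots,u_r$ with $\mathcal{F}_V(\boldsymbol{\Lambda},u_{s+2},\ldots,u_r)\not\equiv 0$ and then, for $\boldsymbol{\lambda}$ outside the resulting hypersurface, observes that the full projection $\pi_r:V\to\Pp^r$ given by $\lambda_0\cdot X,\ldots,\lambda_{s+1}\cdot X,u_{s+2}\cdot X,\ldots,u_r\cdot X$ is a finite morphism; both $V_y$ and $V\cap L$ are then preimages under $\pi_r$ of linear subspaces of $\Pp^r$ of the appropriate dimensions, and finiteness of $\pi_r$ delivers pure dimensionality in one stroke. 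Your route instead bounds $\dim(V\cap L)$ from above via the Chow form, from below via Krull, and then argues $V_y$ separately by a codimension-one contradiction against $V\cap L$. This is perfectly sound, though your appeal to the Cohen--Macaulay property is in fact unnecessary: once you know $\dim(V\cap L)\le r-s-2$ and every component has dimension at least $r-s-2$ by Krull, pure dimensionality of the variety is immediate. The paper's finite-morphism packaging is slightly more economical, but the content is the same.
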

\begin{proof}
Let $U_0, \ldots, U_r$ be $r+1$ groups of $n+1$ indeterminates over
$\cfq[\xon]$, where  $U_{i}:=(U_{i,0}, \ldots, U_{i,n})$, and let
$\boldsymbol{U}:=(U_0\klk U_r)$. Denote by
$\mathcal{F}_V\in\fq[\boldsymbol{U}]$ the Chow form of $V$ (see,
e.g., \cite{HoPe68b}, \cite{Samuel67}). This is an irreducible
polynomial of $\cfq[\boldsymbol{U}]$ which characterizes the set of
overdetermined linear systems over $V$. Furthermore, $\mathcal{F}_V$
is homogeneous in each group of variables $U_i$ and satisfies the
identities $\deg_{U_{i,0}}\mathcal{F}_V=\deg_{U_i} \mathcal{F}_V=
\delta$  for $0\le i\le r$.

%Let $\boldsymbol{u}\in(\Pp^n)^{r+1}$ be such that
%$\mathcal{F}_V(\boldsymbol{u})\not=0$ holds. Then $V\cap \{u_i\cdot
%X=0:0\le i\le r\}$ is empty, and thus the mapping $\pi_r:V\to\Pp^r$
%defined by the linear forms $u_i\cdot X$ ($0\le i\le r$) is a finite
%morphism.
Consider $\mathcal{F}_V$ as a polynomial of $\fq[U_0\klk
U_{s+1}][U_{s+2}\klk U_r]$ and fix\linebreak $u_{s+2}\klk
u_r\in\Pp^n$ such that $B:=\mathcal{F}_V(U_0\klk U_{s+1},u_{s+2}\klk
u_r)$ does not vanish. We claim that any
$\boldsymbol{\lambda}:=(\lambda_0\klk\lambda_{s+1})\in(\Pp^n)^{s+2}$
with $B(\boldsymbol{\lambda})\not=0$ satisfies the requirements of
the lemma.

Indeed, by the definition of $\boldsymbol{\lambda}$ and
$\boldsymbol{u}:=(u_{s+2}\klk u_r)$ we have that the mapping
$\pi_r:V\to\Pp^r$ defined by the linear forms $\lambda_0\cdot X\klk
\lambda_{s+1}\cdot X,u_{s+2}\cdot X\klk u_r\cdot X$ is a finite
morphism. Let $\pi:V\to\Pp^{s+1}$ be the mapping defined by
$\lambda_0\cdot X\klk\lambda_{s+1}\cdot X$. Then the Zariski closure
$V_y$ of any fiber $\pi^{-1}(y)$ agrees with the inverse image by
$\pi_r$ of a linear variety of $\Pp^r$ of dimension $r-s-1$, and
hence is of pure dimension $r-s-1$. On the other hand, the fact that
$V\cap \{\lambda_0\cdot X=\cdots=\lambda_{s+1}\cdot X=u_{s+2}\cdot
X=\cdots=u_r\cdot X=0\}$ is empty immediately implies that the set
of exceptional points $V\cap \{\lambda_0\cdot
X=\cdots=\lambda_{s+1}\cdot X=0\}$ of $\pi$ is of pure dimension
$r-s-2$.

As a consequence, defining $\mathcal{H}_2'\subset(\Pp^n)^{s+2}$ as
the zero locus of the polynomial $B\in\cfq[U_0\klk U_{s+1}]$
finishes the proof of the lemma.
\end{proof}

\begin{lemma}\label{lemma: L no corta sing multih}
There exists an hypersurface $\mathcal{H}_2''\subset(\Pp^n)^{s+2}$,
defined by a multihomogeneous polynomial of
$\cfq[\boldsymbol{\Lambda}]$ of multidegree at most
$D^{r-s-1}\delta$ in each group of variables $\Lambda_i$, with the
following property: if
$\boldsymbol{\lambda}\in(\Pp^n)^{s+2}\setminus\mathcal{H}_2''$,
then $\Sigma\cap L$ is empty.
\end{lemma}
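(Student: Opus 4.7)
The plan is to enlarge the singular locus $\Sigma$ to an equidimensional variety $W$ of dimension exactly $s+1$ and controlled degree, and then take the Chow form of $W$ as the defining polynomial of $\mathcal{H}_2''$. This way, the non-vanishing of $\mathcal{F}_W$ at $\boldsymbol{\lambda}$ will automatically guarantee $\Sigma\cap L\subseteq W\cap L=\emptyset$.

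First I would construct $W\supseteq\Sigma$ of pure dimension $s+1$ and degree at most $D^{r-s-1}\delta$. Recall that $\Sigma$ is the common zero locus in $V$ of the maximal minors $\Delta_1'\klk\Delta_N'$ of the Jacobian matrix $(\partial F_i/\partial X_j)_{1\le i\le n-r,\,0\le j\le n}$, each a form of degree $D$. Mimicking the iterative argument in the proof of Proposition \ref{prop: polar variety as a subset of a variety of dimension s}, I set $V^{(0)}:=V$ and, for $k=1\klk r-s-1$, having obtained $V^{(k-1)}$ of pure dimension $r-k+1$ and degree at most $D^{k-1}\delta$ with $\Sigma\subset V^{(k-1)}$, I choose an $\cfq$--linear combination $G^{(k)}$ of $\Delta_1'\klk\Delta_N'$ that does not vanish identically on any irreducible component of $V^{(k-1)}$ (this is possible because each such component has dimension $r-k+1>s\ge\dim\Sigma$, and hence meets $V^{(k-1)}\setminus\Sigma$). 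Then $V^{(k)}:=V^{(k-1)}\cap\{G^{(k)}=0\}$ has pure dimension $r-k$, degree at most $D^{k}\delta$ by the B\'ezout inequality \eqref{equation:Bezout}, and still contains $\Sigma$ since each $G^{(k)}$ vanishes on $\Sigma$. Setting $W:=V^{(r-s-1)}$ yields the desired variety.

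Next, let $\mathcal{F}_W\in\cfq[U_0\klk U_{s+1}]$ be the Chow form of $W$, defined as the product of the Chow forms of the absolutely irreducible components of $W$ (with appropriate multiplicities). Then $\mathcal{F}_W$ is a nonzero multihomogeneous polynomial with $\deg_{U_i}\mathcal{F}_W=\deg W\le D^{r-s-1}\delta$ for $0\le i\le s+1$, and it vanishes at $\boldsymbol{\lambda}=(\lambda_0\klk\lambda_{s+1})\in(\Pp^n)^{s+2}$ if and only if the linear variety $L:=\{\lambda_0\cdot X=\dots=\lambda_{s+1}\cdot X=0\}$ meets $W$. Defining $\mathcal{H}_2''\subset(\Pp^n)^{s+2}$ to be the hypersurface cut out by $\mathcal{F}_W(\Lambda_0\klk\Lambda_{s+1})$ therefore gives the required bound on each $\Lambda_i$--degree, and for any $\boldsymbol{\lambda}\in(\Pp^n)^{s+2}\setminus\mathcal{H}_2''$ we conclude $\Sigma\cap L\subseteq W\cap L=\emptyset$, as desired.

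The main delicate point is making sure the iterative construction of $W$ yields a variety of pure dimension exactly $s+1$ with degree at most $D^{r-s-1}\delta$; this reduces to the standard observation that since $\dim\Sigma\le s<s+1<\dim V^{(k-1)}$ throughout the process, no component of $V^{(k-1)}$ lies inside $\Sigma$, so that at each step a generic $\cfq$--linear combination of the minors $\Delta_1'\klk\Delta_N'$ does not vanish identically on any such component. Everything else (the multihomogeneous degree of the Chow form, the characterization of its zero locus, and the multiplicativity for equidimensional cycles) is classical.
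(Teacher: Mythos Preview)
Your proof is correct and follows essentially the same approach as the paper: construct a pure $(s+1)$--dimensional variety $W\supseteq\Sigma$ of degree at most $D^{r-s-1}\delta$ via the iterative cutting argument of Proposition~\ref{prop: polar variety as a subset of a variety of dimension s}, then take $\mathcal{H}_2''$ to be the zero locus of the Chow form of $W$. The paper's proof is more terse (it simply invokes the argument of Proposition~\ref{prop: polar variety as a subset of a variety of dimension s} rather than spelling out the induction), but your $W$ is exactly the paper's $Z$ and the conclusion is reached identically.
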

\begin{proof}
Arguing as in the proof of Proposition \ref{prop: polar variety as a
subset of a variety of dimension s} we see that there exists a
projective variety $Z\subset\Pp^n$ of pure dimension $s+1$ and
degree at most $D^{r-s-1}\delta$ with $\Sigma\subset Z$. Let
%$\Lambda_0, \ldots, \Lambda_{s+1}$ be $s+2$ groups of $n+1$
%indeterminates over $\cfq[\xon]$, let $\boldsymbol{\Lambda}
%:=(\Lambda_0\klk \Lambda_{s+1})$ and let
$\mathcal{F}_Z\in\fq[\boldsymbol{\Lambda}]$ be the Chow form of $Z$.
We have that $\mathcal{F}_Z$ is homogeneous in each group of
variables $\Lambda_i$ and satisfies the upper bound
$\deg_{\Lambda_i} \mathcal{F}_Z\le D^{r-s-1}\delta$ for $0\le i\le
s+1$.

Let $\boldsymbol{\lambda}\in(\Pp^n)^{s+2}$ be such that
$\mathcal{F}_Z(\boldsymbol{\lambda})\not=0$ holds and let
$L:=\{\lambda_i\cdot X=0\ (0\le i\le s+1)\}$. Then $Z\cap L$ is
empty and hence so is $\Sigma\cap L$. Therefore, defining
$\mathcal{H}_2''\subset(\Pp^n)^{s+2}$ as the zero locus of
$\mathcal{F}_Z$ finishes the proof of the lemma.
\end{proof}

Similarly to Section \ref{section: polar varieties}, we consider the
following incidence variety:
\begin{eqnarray*}
W_s:=(V_{\rm sm} \times \mathcal{U})\cap\{\Lambda_0\cdot
X=0,\dots,\Lambda_{s+1}\cdot X=0,\qquad\
\\\Delta_1(\boldsymbol{\Lambda},X)
=0,\dots,\Delta_N(\boldsymbol{\Lambda},X)=0\},
\end{eqnarray*}
where $\mathcal{U}\subset(\Pp^n)^{s+2}$ is the Zariski open subset
of $(s+ 2)\times (n+1)$--matrices of maximal rank and
$\Delta_1\klk\Delta_N$ are the maximal minors of the generic version
$\M(X,\boldsymbol{\Lambda})$ of the matrix of (\ref{eq: matrix for
polar variety}).
\begin{proposition}\label{prop: W_s es irreducible multih}
$W_s$ is an irreducible subvariety of $\Pp^n\times \mathcal{U}$ of
dimension $t-1$.
\end{proposition}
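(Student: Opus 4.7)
The plan is to adapt the proof of Proposition \ref{prop: generic polar var has dimension s+t} by studying $W_s$ via the projection $\pi_1: W_s \to V_{\rm sm}$ defined by $\pi_1(x, \boldsymbol{\lambda}) := x$. Compared with $W$, the variety $W_s$ carries the extra $(s+2)$ linear conditions $\Lambda_i \cdot X = 0$, and the point is to check that these ultimately drop the dimension of each fiber of $\pi_1$ by exactly one relative to the fibers in Proposition \ref{prop: generic polar var has dimension s+t}.

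I would fix $x \in V_{\rm sm}$ and describe the fiber $\pi_1^{-1}(x) = \{x\} \times \mathcal{L}_x$, where $\mathcal{L}_x$ consists of matrices $\boldsymbol{\lambda} \in \mathcal{U}$ satisfying (a) $\lambda_i \cdot x = 0$ for $0 \le i \le s+1$, and (b) $\M(x, \boldsymbol{\lambda})$ is not of full rank. The key observation is that, by Euler's identity applied to $F_1, \dots, F_{n-r}$ at $x \in V$, the gradients $\nabla F_j(x)$ already lie in the hyperplane $H_x := \{v \in \A^{n+1}: v \cdot x = 0\}$. Under (a), condition (b) then translates into a linear dependence of the images $\bar{\lambda}_0, \dots, \bar{\lambda}_{s+1}$ in the quotient $\mathbb{V}'_x := H_x / \langle \nabla F_1(x), \dots, \nabla F_{n-r}(x) \rangle$, which has dimension $r$, one less than the quotient space appearing in Proposition \ref{prop: generic polar var has dimension s+t}; this dimension drop is the source of the expected drop in fiber dimension.

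Thus $\mathcal{L}_x$ is (up to projectivization in each factor and intersection with the open set $\mathcal{U}$) the preimage under the surjective linear projection $H_x^{s+2} \to (\mathbb{V}'_x)^{s+2}$ of the rank variety $L_{s+1}(\A^{s+2}, \mathbb{V}'_x)$. By \cite[Proposition 1.1]{BrVe88} the latter is irreducible of dimension $(s+1)(r+1)$, and since the kernel of the projection has dimension $(n-r)(s+2)$, the affine preimage is irreducible of dimension $(s+1)(r+1) + (n-r)(s+2) = n(s+2) + s - r + 1 = t + s - r + 1$; projectivizing each of the $s+2$ factors gives $\dim \mathcal{L}_x = t - r - 1$. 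Non-emptiness of $\mathcal{L}_x \cap \mathcal{U}$ can be verified by choosing, for instance, $\lambda_0 := \nabla F_1(x)$ (which makes (b) automatic) and then extending to a tuple of linearly independent vectors in $H_x$, possible since $\dim H_x = n \ge s+2$.

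Finally, the absolute irreducibility of $V$ ensures that $V_{\rm sm}$ is irreducible of dimension $r$, and $\pi_1$ has all fibers irreducible of the same dimension $t - r - 1$; hence \cite[\S I.6.3, Theorem 8]{Shafarevich94} implies that $W_s$ is irreducible of dimension $r + (t - r - 1) = t - 1$. The main delicate point is the fiber computation: one must correctly identify the pertinent rank variety (the $r$-dimensional quotient $\mathbb{V}'_x$, not the $(r+1)$-dimensional one used before) and then carry out the standard Bruns–Vetter dimension count in that setting.
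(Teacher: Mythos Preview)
Your proposal is correct and follows essentially the same approach as the paper: project $W_s$ onto $V_{\rm sm}$, identify each fiber with (the open part of) the preimage of a Bruns--Vetter rank variety in the $r$--dimensional quotient $H_x/\langle\nabla F_1(x),\dots,\nabla F_{n-r}(x)\rangle$, compute its dimension as $(s+1)(r+1)+(n-r)(s+2)$ affinely (hence $t-r-1$ after projectivization), and conclude via \cite[\S I.6.3, Theorem 8]{Shafarevich94}. Your explicit invocation of Euler's identity and the verification that $\mathcal{L}_x\cap\mathcal{U}\neq\emptyset$ are nice touches that the paper leaves implicit.
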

\begin{proof}
Let $\pi_1:W_s\to V_{\rm sm}$ be the linear mapping defined by
$\pi_1(x,\boldsymbol{\lambda}):= x$. Fix $x\in V_{\rm sm}$ and
consider the fiber $\pi_1^{-1}(x)$. We have that
$\pi_1^{-1}(x)=\{x\}\times \mathcal{L}$, where $\mathcal{L} \subset
\mathcal{U}$ denotes the set of matrices
$\boldsymbol{\lambda}:=(\lambda_0,\dots,\lambda_{s+1})$ such that
$\lambda_j\cdot x=0$ for $0\le j\le s+1$ and the matrix ${\sf
M}(x,\boldsymbol{\lambda})$ is not of full rank. The latter
condition is equivalent to
\begin{equation}\label{eq: cond var incidencia singularidad multih}
\langle\lambda_0,\dots,\lambda_{s+1}\rangle \cap \big\langle\nabla
F_1(x)\klk\nabla F_{n-r}(x)\big\rangle \neq \{\boldsymbol{0}\} ,
\end{equation}
where $\langle v_0\klk v_m\rangle \subset \A^{n+1}$ is the linear
variety spanned by $v_0\klk v_m$ in $\A^{n+1}$. Let
$\mathbb{V}:=\{v\in\A^{n+1}:v\cdot x=0\}$. Observe that $\nabla
F_j(x)\in\mathbb{V}$ for $1\le j\le n-r$. Then (\ref{eq: cond var
incidencia singularidad multih}) holds if and only if
$\lambda_0\klk\lambda_{s+1}$ are not linearly independent in the
quotient $\cfq$--vector space
$$
\mathbb{W}:=\mathbb{V}/\big(\nabla F_1(x)\klk\nabla F_{n-r}(x)\big).
$$
This shows that $\mathcal{L}$ is, modulo $\big(\nabla
F_1(x)\klk\nabla F_{n-r}(x)\big)$, isomorphic to the Zariski open
set $L_{s+1}'(\A^{s+2},\mathbb{W})\cap\mathcal{U}$ of
$L_{s+1}'(\A^{s+2},\mathbb{W})$, where
$$
L_{s+1}'(\A^{s+2},\mathbb{W}):=\{f\in{\rm
Hom}_{\cfq}(\A^{s+2},\mathbb{V}):{\rm rank}(f)\le s+1\}.
$$
According to \cite[Proposition 1.1]{BrVe88},
$L_{s+1}'(\A^{s+2},\mathbb{W})$ is an irreducible variety of
dimension $(s+1)(r+1)$.  Since we are considering subspaces of
$\mathbb{V}$ of dimension $s+2$ modulo $\langle\nabla
F_1(x)\klk\nabla F_{n-r}(x)\rangle$, which has dimension $n-r$, it
follows that $\pi_1^{-1}(x)=\{x\}\times \mathcal{L}$ is an open
dense subset of an irreducible variety of $\Pp^n\times
\A^{(n+1)(s+2)}$ of dimension
$(s+1)(r+1)+(n-r)(s+2)=(n+1)(s+2)-r-1$, or equivalently, an
irreducible variety of $\Pp^n\times\mathcal{U}$ of dimension
$t-r-1$.

Combining these arguments with the proof of \cite[\S I.6.3, Theorem
8]{Shafarevich94} we conclude that $W_s$ is an irreducible variety
of dimension $t-1$.\end{proof}

An immediate consequence of Proposition \ref{prop: W_s es
irreducible multih} is that the Zariski closure of the image of the
projection $\pi_2:W_s\to\mathcal{U}$ is an irreducible variety of
dimension at most $t-1$. Our next result strengthens somewhat this
conclusion and provides further quantitative information.
\begin{proposition}\label{prop: upper bound degree Hs multih}
Let $\mathcal{H}_s\subset(\Pp^n)^{s+2}$ be the Zariski closure of
the image of $\pi_2:W_s\to\mathcal{U}$. Then $\mathcal{H}_s$ is an
hypersurface of $(\Pp^n)^{s+2}$, defined by a multihomogeneous
polynomial of $\cfq[\boldsymbol{\Lambda}]$ of degree at most $\delta
D^{r-s-2}(D+r-s-1)$ in each group of variables $\Lambda_i$.
\end{proposition}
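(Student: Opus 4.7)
The plan is to enlarge $W_s$ to a subvariety $W_s'\subset\Pp^n\times(\Pp^n)^{s+2}$ of pure dimension $t-1$ whose defining polynomials have tractable multidegrees, compute its class in the Chow ring via the multiprojective B\'ezout theorem, and transfer the resulting bound to $\mathcal{H}_s$ by projection, along the lines of the proof of Theorem \ref{theorem: dimension polar variety intrinseca}. Arguing as in the proof of Proposition \ref{prop: polar variety as a subset of a variety of dimension s}, I would produce $r-s-1$ linear combinations $\Delta^1\klk\Delta^{r-s-1}$ of the maximal minors $\Delta_1\klk\Delta_N$ of $\M(X,\boldsymbol{\Lambda})$ for which the subvariety $W_s'$ cut out from $\Pp^n\times(\Pp^n)^{s+2}$ by $F_1\klk F_{n-r}$, by the $s+2$ bilinear forms $\Lambda_j\cdot X$ and by $\Delta^1\klk\Delta^{r-s-1}$ has pure dimension $t-1$ and contains $W_s$. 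The dimension count works out because by Proposition \ref{prop: W_s es irreducible multih} the codimension of $W_s$ is $n+1=(n-r)+(s+2)+(r-s-1)$, which matches the number of equations used.

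Writing $\theta$ for the hyperplane class of the $\Pp^n$--factor and $\theta_j$ for that of the $j$th $\Lambda$--factor, the polynomials $F_i$, $\Lambda_j\cdot X$ and $\Delta^k$ contribute to $\mathcal{A}^*\bigl(\Pp^n\times(\Pp^n)^{s+2}\bigr)$ the classes $d_i\theta$, $\theta+\theta_j$ and $D\theta+\theta_0+\cdots+\theta_{s+1}$ respectively, since each $\Delta^k$ has degree $D$ in $X$ and degree $1$ in every group $\Lambda_j$. The multiprojective B\'ezout theorem \cite[Theorem 1.11]{DaKrSo13} therefore yields the coefficient--wise bound
\begin{equation*}
[W_s']\le \delta\,\theta^{n-r}\prod_{j=0}^{s+1}(\theta+\theta_j)\,(D\theta+\theta_0+\cdots+\theta_{s+1})^{r-s-1}.
\end{equation*}
Expanding both middle factors modulo terms of degree at least $2$ in $\theta_0\klk\theta_{s+1}$,
\begin{align*}
\prod_{j=0}^{s+1}(\theta+\theta_j)&\equiv\theta^{s+2}+\theta^{s+1}(\theta_0+\cdots+\theta_{s+1}),\\
(D\theta+\theta_0+\cdots+\theta_{s+1})^{r-s-1}&\equiv D^{r-s-1}\theta^{r-s-1}+(r-s-1)D^{r-s-2}\theta^{r-s-2}(\theta_0+\cdots+\theta_{s+1}),
\end{align*}
multiplying them and using $\theta^{n+1}=0$ to kill the pure--$\theta$ term, the part of degree one in the $\theta_j$'s simplifies to $\delta D^{r-s-2}(D+r-s-1)\,\theta^n(\theta_0+\cdots+\theta_{s+1})$. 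In particular the coefficient of $\theta^n\theta_j$ in $[W_s']$ is at most $\delta D^{r-s-2}(D+r-s-1)$ for every $j$.

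To transfer this bound to $\mathcal{H}_s$, I apply the analog of \cite[Proposition 1.16]{DaKrSo13} for the projection $\pi_2:\Pp^n\times(\Pp^n)^{s+2}\to(\Pp^n)^{s+2}$, whose action on the Chow ring extracts the coefficient of $\theta^n$. This expresses $(\pi_2)_*[W_s']$ as a non--negative integer combination of the classes of the hypersurfaces $\overline{\pi_2(W_i)}$ arising from the irreducible components $W_i$ of $W_s'$ on which $\pi_2$ is generically finite, and caps each of their degrees in $\Lambda_j$ by $\delta D^{r-s-2}(D+r-s-1)$. Since by Proposition \ref{prop: W_s es irreducible multih} the variety $W_s$ is an irreducible component of $W_s'$, its image $\mathcal{H}_s$ is irreducible and, once known to be one of these hypersurfaces, inherits the desired bound. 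The main obstacle is precisely to ensure that $\mathcal{H}_s$ is indeed a hypersurface of $(\Pp^n)^{s+2}$, i.e.\ that $\pi_2|_{W_s}$ is generically finite: I would handle this by upper semi--continuity of fiber dimension, exhibiting one $\boldsymbol{\lambda}$ whose associated section $V\cap L_{\boldsymbol{\lambda}}$ has a nonempty finite singular locus in $V_{\rm sm}$, typically a simple isolated tangency.
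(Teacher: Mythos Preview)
Your approach matches the paper's: enlarge $W_s$ to a pure $(t-1)$--dimensional $W_s'$ cut out by the $F_i$, the bilinear forms $\Lambda_j\cdot X$, and $r-s-1$ generic combinations of the minors, read off $[W_s']$ via the multiprojective B\'ezout theorem, and push forward under $\pi_2$ using \cite[Proposition 1.16]{DaKrSo13}. For the step you correctly flag as the main obstacle---exhibiting one $\boldsymbol{\lambda}$ with a nonempty zero--dimensional fiber---the paper avoids ad hoc tangency constructions and instead takes $\lambda_0,\ldots,\lambda_s$ generic so that $V\cap\{\lambda_0\cdot X=\cdots=\lambda_s\cdot X=0\}$ is nonsingular of dimension $r-s-1$ (Bertini in the form of \cite[Proposition 1.3]{GhLa02a}), then chooses $\lambda_{s+1}$ making the further section singular and invokes \cite[Appendix, Theorem 2]{Hooley91} to force that singular locus to be zero--dimensional; this works uniformly in positive characteristic, where your ``simple isolated tangency'' would need more justification.
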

\begin{proof}
We first prove that $\mathcal{H}_s$ is an hypersurface. For this
purpose, it suffices to show that there exists a zero dimensional
fiber of $\pi_2$, because the theorem on the dimension of fibers
(see, e.g., \cite[\S I.6, Theorem 7]{Shafarevich94}) readily implies
our assertion.

Fix generic linear forms $\lambda_0\cdot X\klk\lambda_s\cdot X$. By
the Bertini theorem in the form of \cite[Proposition 1.3]{GhLa02a}
we have that $V\cap \{\lambda_0\cdot X=\cdots=\lambda_s\cdot X=0\}$
is nonsingular of pure dimension $r-s-1$. Choose
$\lambda_{s+1}\in\Pp^n$ such that $V\cap \{\lambda_0\cdot
X=\cdots=\lambda_{s+1}\cdot X=0\}$ is singular. From \cite[Appendix,
Theorem 2]{Hooley91} it follows that the singular locus of $V\cap
\{\lambda_0\cdot X=\cdots=\lambda_{s+1}\cdot X=0\}$ has dimension
zero. Since such a singular locus is isomorphic to the fiber
$\pi_2^{-1}(\lambda_0\klk\lambda_{s+1})$, we deduce the existence of
a zero--dimensional fiber of $\pi_2$, which completes the proof of
the first assertion.

Next, fix $r-s-1$ generic linear combinations, say
$\Delta^1\klk\Delta^{r-s-1}$, of $\Delta_1(\boldsymbol{\Lambda},X)
\klk \Delta_N(\boldsymbol{\Lambda},X)$, such that the subvariety
$W_s'\subset(\Pp^n)^{s+3}$ defined by the set of common zeros of the
equations
\begin{eqnarray*}
F_1=0,\dots,F_{n-r}=0,\Lambda_0\cdot X=0,\dots,\Lambda_{s+1}\cdot
X=0,\\ \Delta^1(\boldsymbol{\Lambda},X)=0,\dots,
\Delta^{r-s-1}(\boldsymbol{\Lambda},X)=0,\qquad\quad
\end{eqnarray*}
is of pure dimension $t-1$ and let
$\mathcal{H}_s'\subset(\Pp^n)^{s+2}$ be the union of the components
of the Zariski closure of $\pi_2(W_s')$ of dimension $t-1$. Then
$\mathcal{H}_s'$ is an hypersurface containing $\mathcal{H}_s$.

Finally, we estimate the multidegree of $\mathcal{H}_s'$. For this
purpose, we consider the class $[W_s']$ of $W_s'$ in the Chow ring
$\mathcal{A}^*((\Pp^n)^{s+3})$ of $(\Pp^n)^{s+3}$. Denote by
$\theta_{j-2}$ the class of the inverse image of a hyperplane of
$\Pp^n$ under the $j$th canonical projection $(\Pp^n)^{s+3}\to\Pp^n$
for $1\le j\le s+2$. Then the multihomogeneous B\'ezout theorem
(see, e.g., \cite[Theorem 1.11]{DaKrSo13}) asserts that
\begin{eqnarray*}
[W_s']&=&\prod_{i=1}^{n-r}(d_i\theta_{-1})
\prod_{j=0}^{s+1}(\theta_{-1}+\theta_j)\prod_{k=1}^{r-s-1}(D\theta_{-1}+
\theta_0\plp\theta_{s+1})\\
&=&\delta D^{r-s-2}(D+r-s-1)(\theta_{-1})^n(
\theta_0\plp\theta_{s+1})\\&&+\mbox{terms of lower degree in
}\theta_{-1}.
\end{eqnarray*}
On the other hand
$[\mathcal{H}_s']=\deg_X\!\!H_s'\,\theta_{-1}+\deg_{\Lambda_0}\!\!H_s'\,\theta_0
\plp\deg_{\Lambda_{s+1}}\!\!H_s'\,\theta_{s+1}$, where
$H_s'\in\fq[\Lambda]$ is a polynomial of minimal degree defining
$\mathcal{H}_s'$. Let $\jmath:\mathcal{A}^*\big((\Pp^n)^{s+2}\big)
\hookrightarrow \mathcal{}A^*\big((\Pp^n)^{s+3}\big)$ be the
injective $\Z$--map $P\mapsto(\theta_{-1})^nP$ induced by $\pi_2$.
Then \cite[Proposition 1.16]{DaKrSo13} shows that
$\jmath([\mathcal{H}_s'])\le [W_s']$, where the inequality is
understood in a coefficient--wise sense. This implies
$\deg_{\Lambda_j}\!H_s'\le\delta D^{r-s-2}(D+r-s-1)$ for $0\le j\le
s+1$, finishing thus the proof of the proposition. \end{proof}

Finally, combining Lemmas \ref{lemma: dimension de la fibra} and
\ref{lemma: L no corta sing multih} and Proposition \ref{prop: upper
bound degree Hs multih} we obtain the main result of this section.
\begin{corollary}\label{coro: singular locus}
There exists an hypersurface $\mathcal{H}_2\subset(\Pp^n)^{s+2}$,
defined by a multihomogeneous polynomial of degree at most
$\big(D^{r-s-2}(2D+r-s-1)+1\big)\delta$ in each group of variables
$\Lambda_i$, with the following property: if
$\boldsymbol{\lambda}\in(\Pp^n)^{s+2}\setminus\mathcal{H}_2$, then
$V\cap L$ is nonsingular of pure dimension $r-s-2$ and
$\boldsymbol{\lambda}$ satisfies the conditions in the statements of
Lemmas \ref{lemma: dimension de la fibra} and \ref{lemma: L no corta
sing multih}.
\end{corollary}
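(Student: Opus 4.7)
The plan is to take $\mathcal{H}_2$ to be the union of the three hypersurfaces already constructed: $\mathcal{H}_2'$ from Lemma \ref{lemma: dimension de la fibra}, $\mathcal{H}_2''$ from Lemma \ref{lemma: L no corta sing multih}, and $\mathcal{H}_s$ from Proposition \ref{prop: upper bound degree Hs multih}. Since each of these is a hypersurface of $(\Pp^n)^{s+2}$, their union is cut out by the product of their defining multihomogeneous polynomials, which is again multihomogeneous with the degree in each group of variables $\Lambda_i$ equal to the sum of the corresponding three degrees. Any $\boldsymbol{\lambda}\in(\Pp^n)^{s+2}\setminus\mathcal{H}_2$ automatically satisfies the conditions of both lemmas, so it remains to argue that $V\cap L$ is nonsingular of pure dimension $r-s-2$.

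First I would note that the exceptional locus of $\pi$ is exactly $V\cap L$; hence Lemma \ref{lemma: dimension de la fibra} applied to $\boldsymbol{\lambda}\notin\mathcal{H}_2'$ yields that $V\cap L$ has pure dimension $r-s-2$. Next I would split the singular locus of $V\cap L$ into two pieces: points in $\Sigma\cap L$ and points in $(V_{\rm sm}\cap L)$ at which $L$ fails to meet the tangent space $T_xV$ transversally. Lemma \ref{lemma: L no corta sing multih} applied to $\boldsymbol{\lambda}\notin\mathcal{H}_2''$ kills the first piece. For the second piece I would invoke the fact that $V\cap L$ is smooth at $x\in V_{\rm sm}\cap L$ precisely when $T_xV\cap L$ has the expected dimension $r-s-2$, which by the discussion preceding (\ref{eq: matrix for polar variety}) is equivalent to $\M(x,\boldsymbol{\lambda})$ being of maximal rank. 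But the failure of this maximality is exactly the condition defining the incidence variety $W_s$, so it means $(x,\boldsymbol{\lambda})\in W_s$ and hence $\boldsymbol{\lambda}\in\pi_2(W_s)\subset\mathcal{H}_s$, contrary to our hypothesis.

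Finally I would collect the degree bounds. Writing down the three contributions, the degree of the defining polynomial of $\mathcal{H}_2$ in each $\Lambda_i$ is at most
\[
\delta + D^{r-s-1}\delta + \delta D^{r-s-2}(D+r-s-1)
= \delta\bigl(1 + 2D^{r-s-1} + (r-s-1)D^{r-s-2}\bigr),
\]
which factors as $\bigl(D^{r-s-2}(2D+r-s-1)+1\bigr)\delta$, matching the stated bound.

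I do not expect a serious obstacle here: the content of the corollary is essentially assembled from results already in place, and the only point requiring care is the clean bookkeeping of the defining polynomial of $\mathcal{H}_2$, since taking the union of three hypersurfaces corresponds to multiplying the three multihomogeneous defining polynomials and the multidegrees add in each group of variables. The subtlest conceptual point is the identification of the transversality failure locus of $V_{\rm sm}\cap L$ with (the image under $\pi_2$ of) $W_s$, but this is exactly the content of the matrix-rank characterization of $T_xV\cap L$ recorded just before (\ref{eq: matrix for polar variety}).
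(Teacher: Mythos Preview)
Your proposal is correct and matches the paper's approach exactly: the paper simply states that the corollary follows by combining Lemmas \ref{lemma: dimension de la fibra}, \ref{lemma: L no corta sing multih} and Proposition \ref{prop: upper bound degree Hs multih}, and your argument spells out precisely this combination, including the degree bookkeeping. The transversality identification you single out as the subtlest point is indeed the content the paper records separately in Remark \ref{remark: polar var intersection L}.
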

%
%\begin{remark}\label{rem: smoothness points v cap L}
%Taking into account that $\Sigma\cap L\subset\mathrm{Sing}(V\cap L)$
%we conclude that, for
%$\boldsymbol{\lambda}\in(\Pp^n)^{s+2}\setminus\mathcal{H}_2$ and $L$
%as in Corollary \ref{coro: singular locus}, we have $V\cap L\subset
%V_{\rm sm}$.
%\end{remark}
%
%----------------------------------------------------------------
%----------------------------------------------------------------
%----------------------------------------------------------------
%----------------------------------------------------------------
%----------------------------------------------------------------
%----------------------------------------------------------------
%----------------------------------------------------------------
%----------------------------------------------------------------
%
\section{An effective Bertini theorem}\label{section: Bertini}
This section is devoted to obtain an effective version of the
Bertini smoothness Theorem. The Bertini smoothness theorem (see,
e.g., \cite[II.6.2, Theorem 2]{Shafarevich94}) asserts that, given a
dominant morphism $f:V_1\to V_2$ of irreducible varieties defined
over a field of characteristic zero with $V_1$ nonsingular, there
exists a dense open set $U$ of $V_2$ such that the fiber $f^{-1}(y)$
is nonsingular for every $y\in U$. An effective version of this
result provides an upper bound of the degree of the subvariety of
$V_2$ consisting of the points defining singular fibers. The
effective version we shall obtain holds without any restriction on
the characteristic of the ground field and generalizes significantly
\cite[Theorem 5.3]{CaMa07}.

Let $V \subset \Pp^n$ be an ideal--theoretic complete intersection
defined over $\fq$ of dimension $r$ and degree $\delta$. Let
$F_1\klk F_{n-r} \in \fq[\xon]$ be homogeneous polynomials of
degrees $d_1\ge\cdots\ge d_{n-r}\ge 2$ respectively, which generate
the ideal $I(V)$ of $V$. We assume that the singular locus $\Sigma$
of $V$ has dimension at most $s\le r-2$.

Let $\boldsymbol{\lambda}:=(\lambda_0\klk\lambda_{s+1})\in
(\Pp^n)^{s+2}\setminus\mathcal{H}_2$, where
$\mathcal{H}_2\subset(\Pp^n)^{s+2}$ is the hypersurface of the
statement of Corollary \ref{coro: singular locus}, and let
$Y_j:=\lambda_j\cdot X$ for $0\le j\le s+1$. Let $\pi:V\to\Pp^{s+1}$
be the linear mapping defined by $Y_0\klk Y_{s+1}$. The set of
exceptional points of $\pi$ is equal to $V\cap L$, where
$L:=\{Y_0=\cdots=Y_{s+1}=0\}$.
\begin{remark}\label{remark: polar var intersection L}
With assumptions and notations as above, $\Sigma\cap L$ is empty,
and thus, ${\sf M}(L)\cap L={\rm Sing}(V\cap L)$ is also empty.
\end{remark}
\begin{proof}
The fact that $\boldsymbol{\lambda}\notin\mathcal{H}_2$ implies that
$\boldsymbol{\lambda}\notin\mathcal{H}_2''$, where
$\mathcal{H}_2''\subset(\Pp^n)^{s+2}$ is the hypersurface of the
statement of Lemma \ref{lemma: L no corta sing multih}. Then
$\Sigma\cap L$ is empty.

Observe that
${\rm Sing}(V\cap L)$
is the set of points of $V_{\rm sm}\cap L=V\cap L$ where the
intersection is not transversal. For a point $x\in V\cap L$, the
intersection is not transversal if and only if $\dim(T_xV\cap
L)>r+(n-s-2)-n=r-s-2$. This shows that $x\in \mathrm{Sing}(V\cap L)$
if and only if $x\in{\sf M}(L)\cap L$, namely ${\sf M}(L)\cap L={\rm
Sing}(V\cap L)$.

Finally, $\boldsymbol{\lambda}\notin\mathcal{H}_s$, where
$\mathcal{H}_s\subset(\Pp^n)^{s+2}$ is the hypersurface of the
statement Proposition \ref{prop: upper bound degree Hs multih},
which implies that $V\cap L$ is nonsingular.
\end{proof}

We shall prove that, for a generic choice of $Y_0\klk Y_{s+1}$,
there exists a nonempty open subset $U$ of $\Pp^{s+1}$ such that the
Zariski closure $V_y$ of $\pi^{-1}(y)$ is nonsingular for every
$y\in U$. Furthermore, we shall provide an estimate on the degree of
the generic condition underlying the choice of $Y_0\klk Y_{s+1}$ and
the degree of the variety $\Pp^{s+1}\setminus U$ yielding
nonsingular fibers.

The first step is to obtain a sufficient condition for the
nonsingularity of the linear section $V_y$ of $V$ defined by a point
$y\in \Pp^{s+1}$. Fix $y:=(y_0:\dots:y_s)\in\Pp^{s+1}$ and assume
without loss of generality that $y_0\not=0$ holds. Then
$$V_y=\big\{x\in V:y_jY_0(x)-y_0Y_j(x)=0\ (1\le j\le s)\big\}.$$
In particular, we have that $V\cap L\subset V_y$. Since
$\boldsymbol{\lambda}\notin \mathcal{H}_2$, where
$\mathcal{H}_2\subset(\Pp^n)^{s+2}$ is the hypersurface of Corollary
\ref{coro: singular locus}, it turns out that $\Sigma\cap L$ is
empty. Furthermore, by Remark \ref{remark: polar var intersection L}
we have that $V\cap L$ is nonsingular. This in particular implies
that any point of $V\cap L$ is a nonsingular point of $V_y$.

Now we can state and prove a sufficient condition for the
nonsingularity of the linear section $V_y$ of $V$. For this purpose,
we shall consider as before the linear mapping
$\varphi_x:T_xV\to\Pp^{s+1}$ defined by $Y_0\klk Y_{s+1}$.
\begin{lemma}\label{lemma:fibra no singular}
Let $y$ be a point of $\Pp^{s+1}$ such that for every
$x\in\pi^{-1}(y)$ the following conditions hold:
\begin{enumerate}
  \item[($i$)] $x$ is a regular point of $V$,
  \item[($ii$)] the set of exceptional points of $\varphi_x$ has dimension
  at most $r-s-2$.
\end{enumerate}
Then $V_y$ is a nonsingular variety.
\end{lemma}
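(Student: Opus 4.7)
My plan is to compute the tangent space $T_xV_y$ at each point $x\in V_y$ and verify that its projective dimension equals $r-s-1$, matching the pure dimension of $V_y$ that is guaranteed by Lemma~\ref{lemma: dimension de la fibra} (applicable since $\boldsymbol{\lambda}\notin\mathcal{H}_2\supseteq\mathcal{H}_2'$).

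Working in the standard affine chart $y_0\neq 0$, the fiber $V_y$ is cut out in $V$ by the $s+1$ linear forms $y_jY_0-y_0Y_j$ for $1\le j\le s+1$. For a smooth point $x$ of $V$, a vector $v\in T_xV$ lies in $T_xV_y$ exactly when $Y_j(v)=(y_j/y_0)\,Y_0(v)$ for every $j$, i.e.\ when the image $\varphi_x(v)\in\A^{s+2}$ is proportional to the coordinate vector of $y$. Denoting that affine line by $\ell_y\subset\A^{s+2}$, we obtain $T_xV_y=\varphi_x^{-1}(\ell_y)$, and therefore
\[
  \dim T_xV_y\;=\;\dim\ker\varphi_x\;+\;\dim(\mathrm{im}\,\varphi_x\cap\ell_y),
\]
with dimensions taken in the affine cone sense.

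I would then split into two cases according to whether $x$ lies in the base locus $V\cap L$. If $x\in\pi^{-1}(y)$, so $x\notin L$, condition $(i)$ gives smoothness of $V$ at $x$, while condition $(ii)$ translates into $\E_x=\ker\varphi_x$ having projective dimension at most $r-s-2$, so $\varphi_x$ has affine rank $s+2$ and is surjective; then $\mathrm{im}\,\varphi_x\cap\ell_y=\ell_y$ and the display yields $\dim T_xV_y=r-s$ affinely, i.e.\ $r-s-1$ projectively. If instead $x\in V\cap L\subset V_y$, conditions $(i)$ and $(ii)$ are vacuous, but $\boldsymbol{\lambda}\notin\mathcal{H}_2$ together with Remark~\ref{remark: polar var intersection L} forces $\Sigma\cap L=\emptyset$ and $V\cap L$ nonsingular at $x$. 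Smoothness of $V\cap L$ at $x$ means $T_x(V\cap L)=T_xV\cap L=\ker\varphi_x$ has projective dimension $r-s-2$, hence again $\varphi_x$ is surjective and the same computation yields $\dim T_xV_y=r-s-1$ projectively.

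The main subtlety will be precisely the treatment of the base-locus points $x\in V\cap L\subset V_y$: these do not belong to $\pi^{-1}(y)$, so conditions $(i)$ and $(ii)$ are not available, and their smoothness in $V_y$ must be recovered indirectly from the global genericity hypothesis $\boldsymbol{\lambda}\notin\mathcal{H}_2$ via Remark~\ref{remark: polar var intersection L}. Once $\dim T_xV_y=\dim V_y$ holds at every point of $V_y$, nonsingularity of $V_y$ follows at once.
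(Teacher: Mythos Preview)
Your proposal is correct and follows essentially the same approach as the paper: both compute $\dim T_xV_y$ via the linear map $\varphi_x$, splitting into the cases $x\in\pi^{-1}(y)$ and $x\in V\cap L$, and both invoke Remark~\ref{remark: polar var intersection L} for the base--locus points. The only cosmetic difference is that the paper phrases Case~1 as $\dim T_xV_y=\dim\E_{x,y}+\dim\varphi_x(T_xV_y)+1\le\dim\E_x+1$ (using that $\pi|_{V_y}$ is constant), whereas you write $T_xV_y=\varphi_x^{-1}(\ell_y)$ and deduce surjectivity of $\varphi_x$ from~$(ii)$; and in Case~2 the paper argues directly from the rank of the Jacobian of $F_1,\ldots,F_{n-r},Y_0,\ldots,Y_{s+1}$, while you recover the same conclusion via $\ker\varphi_x=T_x(V\cap L)$.
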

\begin{proof}
Since $V_y$ is of pure dimension $r-s-1$, it suffices to prove that
for every $x \in V_y$ the tangent space $T_xV_y$ has dimension at
most $r-s-1$. Fix $x\in \pi^{-1}(y)$. Condition $(i) $ implies that
the tangent space $T_xV$ has dimension $r$. Consider the linear
mapping
     $$
       \begin{array}{rrcl}
       \varphi_{x}|_{T_x{V_y}}:& T_x V_y & \to & \Pp^{s+1}
       \\
        & v  & \mapsto & (Y_0(v):\dots: Y_{s+1}(v)).
       \end{array}
     $$
It is clear that the set $\E_{x,y}$ of exceptional points of
$\varphi_{x}|_{T_x{V_y}}$ is contained in the set $\E_{x}$ of
exceptional points of $\varphi_x$. From the fact that the
restriction $\pi|_{V_y}:V_y\to\Pp^{s+1}$ maps $V_y$ to the point $y$
it follows that the dimension of $\varphi_{x}(T_x{V_y})$ is equal to
0. By the Dimension theorem of linear algebra (see, e.g.,
\cite[Chapter 8, Section 4]{HoPe68a}) we have that
$$\dim T_xV_y = \dim \E_{x,y} + \dim \varphi_{x}(T_x{V_y})+1.$$
From this and condition $(ii)$  we deduce that
$$\dim T_xV_y\le\dim \E_x+1\le r-s-1.$$
We conclude that $\dim T_xV_y=r-s-1$ and therefore $x$ is a regular
point of $V_y$.

Finally, let $x\in V_y\setminus\pi^{-1}(y)$. Then $x\in V\cap L$ and
is a regular point of $V\cap L$. From the fact that $F_1\klk
F_{n-r},Y_0\klk Y_{s+1}$ define the ideal of $V\cap L$ we easily
deduce that $x$ is also a regular point of $V_y$, which finishes the
proof of the lemma. \end{proof}

The critical point of our effective Bertini theorem is the analysis
of the set of regular points $x\in V$ for which the dimension of the
set of exceptional points of $\varphi_x$ has dimension at least
$r-s-1$, where $Y_0\klk Y_{s+1}$ are suitably chosen. Remark
\ref{lemma: polar variety through kernels} asserts that this set is
the polar variety $\M(L)$, where $L:=\{Y_0=\cdots=Y_{s+1}=0\}$.

Now we are ready to state our effective version of the Bertini
smoothness theorem.
\begin{theorem}\label{theorem: Bertini}
Let $\mathcal{H}_1$ and $\mathcal{H}_2$ be the hypersurfaces of
$(\Pp^n)^{s+2}$ of the statements of Theorem \ref{theorem: dimension
polar variety intrinseca} and Corollary \ref{coro: singular locus}
respectively, let $\mathcal{H}:=\mathcal{H}_1\cup\mathcal{H}_2$ and
let $\boldsymbol{\lambda}:=(\lambda_0\klk \lambda_{s+1})
\in(\Pp^n)^{s+2}\setminus\mathcal{H}$. Let $Y_j:=\lambda_j\cdot X$
for $0\le j\le s+1$, let $L:=\{Y_0=\cdots=Y_{s+1}=0\}$ and let
$\pi:V\to\Pp^{s+1}$ be the linear mapping defined by $Y_0\klk
Y_{s+1}$. Then there exists a closed set $W(L)\subset \Pp^{s+1}$ of
dimension at most $s$ and degree at most $D^{r-s}\delta$ such that
for every  $y \in \Pp^{s+1}\setminus W(L)$ the linear section $V_y$
of $V$ is nonsingular of pure dimension $r-s-1$.
\end{theorem}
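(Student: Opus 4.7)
The plan is to package the machinery developed in the previous sections: we single out a subvariety $Z(L) \subset V$ of dimension $s$ that contains both $\Sigma$ and the polar variety $\M(L)$, define $W(L)$ as the Zariski closure of its image under $\pi$, and then verify nonsingularity of fibers away from $W(L)$ via Lemma \ref{lemma:fibra no singular}.

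First, since $\boldsymbol{\lambda} \notin \mathcal{H}_1$, Theorem \ref{theorem: dimension polar variety intrinseca} guarantees that $\M(L)$ has dimension at most $s$. Combining this with $\dim \Sigma \le s$, we may apply Proposition \ref{prop: polar variety as a subset of a variety of dimension s} to obtain a subvariety $Z(L) \subset V$ of pure dimension $s$ and degree at most $D^{r-s}\delta$ with $\M(L) \cup \Sigma \subset Z(L)$. I then set $W(L) := \overline{\pi(Z(L))} \subset \Pp^{s+1}$. Since $\pi$ restricted to $Z(L)$ is a regular linear map, inequality (\ref{eq:degree linear projection}) yields $\deg W(L) \le \deg Z(L) \le D^{r-s}\delta$, while $\dim W(L) \le \dim Z(L) = s$.

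Now fix $y \in \Pp^{s+1} \setminus W(L)$ and pick any $x \in \pi^{-1}(y)$. By construction $x \notin Z(L)$, and hence $x \notin \Sigma$ and $x \notin \M(L)$. The first of these gives condition $(i)$ of Lemma \ref{lemma:fibra no singular}, namely that $x$ is a regular point of $V$. The second, together with Remark \ref{lemma: polar variety through kernels}, gives that the set $\E_x$ of exceptional points of $\varphi_x$ has dimension at most $r-s-2$, which is precisely condition $(ii)$. Thus Lemma \ref{lemma:fibra no singular} implies that $V_y$ is nonsingular.

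To conclude, I need the pure dimension statement. Since $\boldsymbol{\lambda} \notin \mathcal{H}_2$ and in particular $\boldsymbol{\lambda} \notin \mathcal{H}_2'$ in the notation of Lemma \ref{lemma: dimension de la fibra}, that lemma guarantees that $V_y$ has pure dimension $r-s-1$ for every $y \in \Pp^{s+1}$. Combined with the nonsingularity just established, this finishes the theorem. The heavy lifting was already done in the preceding sections (the degree bound on the hypersurfaces $\mathcal{H}_1, \mathcal{H}_2$ and the construction of $Z(L)$ with controlled degree); the present proof is essentially an assembly step, and no serious obstacle remains.
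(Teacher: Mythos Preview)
Your proof is correct and follows essentially the same approach as the paper: both construct $Z(L)$ via Proposition \ref{prop: polar variety as a subset of a variety of dimension s}, set $W(L)$ to be (the closure of) $\pi(Z(L))$, bound its dimension and degree via (\ref{eq:degree linear projection}), and then invoke Lemma \ref{lemma:fibra no singular} for $y\notin W(L)$ together with Corollary \ref{coro: singular locus} (through Lemma \ref{lemma: dimension de la fibra}) for the pure--dimension statement. Your write--up is in fact slightly more explicit than the paper's in verifying conditions $(i)$ and $(ii)$ of Lemma \ref{lemma:fibra no singular} via Remark \ref{lemma: polar variety through kernels}.
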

\begin{proof}
Since $\boldsymbol{\lambda}\notin \mathcal{H}$, by Theorem
\ref{theorem: dimension polar variety intrinseca} it follows that
the polar variety $\M(L)$  has dimension $s$. Let $Z(L)\subset V$ be
the subvariety of dimension $s$ and degree at most $D^{r-s}\delta$
whose existence was established in Proposition \ref{prop: polar
variety as a subset of a variety of dimension s}. Then
      $$
        \M(L) \cup  \Sigma \subset Z(L).
      $$
Defining  $W(L):=\pi(Z(L))$ it turns out that $W(L)\subset
\Pp^{s+1}$ has dimension at most  $s$. Furthermore, by
(\ref{eq:degree linear projection}) we have that $W(L)$ has degree
at most $D^{r-s}\delta$.

Let $y\in\Pp^{s+1}\setminus W(L)$. By Corollary \ref{coro: singular
locus} we have that $V_y$ is of pure dimension $r-s-1$. Furthermore,
the conditions of the statement of Lemma \ref{lemma:fibra no
singular} are satisfied, and hence $V_y$ is nonsingular. This
finishes the proof of the theorem. \end{proof}
\begin{remark}\label{rem: V_y is contained in V_sm}
With notations and assumptions as in Theorem \ref{theorem: Bertini},
for $\boldsymbol{\lambda}\in(\Pp^n)^{s+2}\setminus\mathcal{H}$ and
$y \in \Pp^{s+1}\setminus W(L)$, the linear section $V_y$ is
contained in $V_{\rm sm}$. Indeed, by the choice of $y$ it turns out
that any point $x\in\pi^{-1}(y)$ is a regular point of $V$. On the
other hand, if $x\in V_y\setminus\pi^{-1}(y)$, then $x\in V\cap L$,
and $V\cap L\subset V_{\rm sm}$ by Remark \ref{remark: polar var
intersection L}.
\end{remark}

Since the linear section $V_y$ is a nonsingular projective complete
intersection for $y\notin W(L)$, the Hartshorne connectedness
theorem (see, e.g., \cite[VI, Theorem 4.2]{Kunz85}) shows that $V_y$
is connected, which implies that it is absolutely irreducible.

In what follows, we shall frequently use the notation
$$B_{\boldsymbol{d},s}:=D^{r-s-2}\delta
\big(((n-s)(r-s)+2)D+r-s-1)\big)+\delta+1.$$
\begin{corollary}\label{coro: existencia fibra no singular fq-definible}
For $q>\max\{B_{\boldsymbol{d},s},D^{r-s}\delta\}$, there exists $y
\in \Pp^{s+1}(\fq)$ such that $V_y$ is a nonsingular $\fq$--variety
of pure dimension $r-s-1$. In other words, $V$ has a nonsingular
linear section of pure dimension $r-s-1$ defined over $\fq$.
\end{corollary}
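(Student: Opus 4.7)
The plan is to apply Theorem \ref{theorem: Bertini} to an $\fq$--rational choice of $\boldsymbol{\lambda}$, and then to pick an $\fq$--rational $y\in\Pp^{s+1}(\fq)\setminus W(L)$. Since both $\boldsymbol{\lambda}$ and $y$ will then be $\fq$--rational and $V$ is defined over $\fq$, the fiber $V_y$ will automatically be defined over $\fq$. The two quantitative ingredients are Corollary \ref{coro: existencia no-cero pol multih}, used to avoid $\mathcal{H}$ inside $(\Pp^n)^{s+2}$, and the projective upper bound \eqref{eq: projective upper bound}, used to avoid $W(L)$ inside $\Pp^{s+1}$.

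For the first step, I would note that $\mathcal{H}=\mathcal{H}_1\cup\mathcal{H}_2$ is contained in the zero locus of the product $F$ of the two multihomogeneous polynomials defining $\mathcal{H}_1$ and $\mathcal{H}_2$. By Theorem \ref{theorem: dimension polar variety intrinseca} and Corollary \ref{coro: singular locus}, the degree of $F$ in each group of variables $\Lambda_i$ is at most
$$(n-s)(r-s)D^{r-s-1}\delta+1+\bigl(D^{r-s-2}(2D+r-s-1)+1\bigr)\delta,$$
which rearranges to exactly $B_{\boldsymbol{d},s}$. Since $q>B_{\boldsymbol{d},s}$, Corollary \ref{coro: existencia no-cero pol multih} yields an $\fq$--rational $\boldsymbol{\lambda}\in(\Pp^n)^{s+2}$ at which $F$ does not vanish, and hence $\boldsymbol{\lambda}\notin\mathcal{H}$.

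For the second step, Theorem \ref{theorem: Bertini} attaches to this $\boldsymbol{\lambda}$ a closed subset $W(L)\subset\Pp^{s+1}$ of dimension at most $s$ and degree at most $D^{r-s}\delta$. The projective bound \eqref{eq: projective upper bound} gives $|W(L)\cap\Pp^{s+1}(\fq)|\le D^{r-s}\delta\cdot p_s$, while $|\Pp^{s+1}(\fq)|=q\,p_s+1$; the hypothesis $q>D^{r-s}\delta$ makes the latter strictly larger, producing a point $y\in\Pp^{s+1}(\fq)\setminus W(L)$. Theorem \ref{theorem: Bertini} then guarantees that the associated linear section $V_y$ is nonsingular of pure dimension $r-s-1$ and, by $\fq$--rationality of $\boldsymbol{\lambda}$ and $y$, defined over $\fq$.

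The only nontrivial point is the arithmetic check that the combined multidegree bound equals $B_{\boldsymbol{d},s}$; everything else is a direct assembly of Corollary \ref{coro: existencia no-cero pol multih} and the projective upper bound on top of Theorem \ref{theorem: Bertini}, with no additional geometric input.
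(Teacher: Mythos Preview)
Your proposal is correct and follows essentially the same approach as the paper's own proof: first use Corollary \ref{coro: existencia no-cero pol multih} with the combined multidegree bound $B_{\boldsymbol{d},s}$ to find an $\fq$--rational $\boldsymbol{\lambda}\notin\mathcal{H}$, then invoke Theorem \ref{theorem: Bertini} and use the projective upper bound \eqref{eq: projective upper bound} together with $q>D^{r-s}\delta$ to find $y\in\Pp^{s+1}(\fq)\setminus W(L)$. Your write-up is in fact slightly more explicit than the paper's in spelling out the count $|\Pp^{s+1}(\fq)|=q\,p_s+1>D^{r-s}\delta\cdot p_s$ for the second step, but the logic is identical.
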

\begin{proof}
According to Theorem \ref{theorem: dimension polar variety
intrinseca} and Corollary \ref{coro: singular locus}, there exist
hypersurfaces $\mathcal{H}_1$ and $\mathcal{H}_2$ of $(\Pp^n)^{s+2}$
such that for any $\boldsymbol{\lambda}
\in(\Pp^n)^{s+2}\setminus(\mathcal{H}_1\cup \mathcal{H}_2)$, the
associated linear space $L$ satisfy the requirements of the
statements of Theorem \ref{theorem: dimension polar variety
intrinseca} and Corollary \ref{coro: singular locus}. Since
$\mathcal{H}_1$ and $\mathcal{H}_2$ are defined by multihomogeneous
polynomials of $\cfq[\boldsymbol{\Lambda}]$ of degree at most
$(n-s)(r-s)D^{r-s-1}\delta+1$ and
$\delta\big(D^{r-s-2}(2D+r-s-1)+1\big)$ in each group of variables
$\Lambda_i$ respectively, it follows that
$\mathcal{H}:=\mathcal{H}_1\cup \mathcal{H}_2$ is defined by a
multihomogeneous polynomial of $\cfq[\boldsymbol{\Lambda}]$ of
degree at most $B_{\boldsymbol{d},s}$ in each group of variables
$\Lambda_i$. By Corollary \ref{coro: existencia no-cero pol multih}
we conclude that, if $q>B_{\boldsymbol{d},s}$, then there exists a
point $\boldsymbol{\lambda}\in
(\Pp^n(\fq))^{s+2}\setminus\mathcal{H}$. Let $\pi:V\to\Pp^{s+1}$ be
the linear mapping defined by the corresponding linear forms
$Y_0\klk Y_{s+2}$. Then Theorem \ref{theorem: Bertini} shows that
there exists an hypersurface $W(L)\subset\Pp^{s+1}$ of degree at
most $D^{r-s}\delta$ such that, for $y\in\Pp^{s+1}\setminus W(L)$,
the linear section $V_y$ is nonsingular. Since $q>D^{r-s}\delta$, we
see that there exists $y\in\Pp^{s+1}(\fq)\setminus W(L)$, from which
the corollary follows. \end{proof}
%
%----------------------------------------------------------------
%----------------------------------------------------------------
%----------------------------------------------------------------
%----------------------------------------------------------------
%----------------------------------------------------------------
%----------------------------------------------------------------
%----------------------------------------------------------------
%----------------------------------------------------------------
%
\section{Results of existence of smooth $\fq$--rational points}
\label{section: existence}
In this section we obtain results of existence of smooth
$\fq$--rational points of a complete intersection $V \subset \Pp^n$
defined over $\fq$, of dimension $r$, degree $\delta$, multidegree
$\boldsymbol{d}:= (d_1\klk d_{n-r})$ with $d_1\ge \cdots\ge
d_{n-r}\ge 2$ and singular locus $\Sigma$ of dimension at most $s$.
More precisely, we establish conditions on $q$ which imply that
$V_{\rm sm}(\fq)$ is not empty.

The usual approach to this kind of results relies on a combination
of the available estimates on the number of $\fq$--rational points
and upper bounds for the number of singular $\fq$--rational points.
Instead of doing this, we shall use the effective version of the
Bertini smoothness theorem of Section \ref{section: Bertini} in
order to establish the existence of a nonsingular linear section of
$V$ defined over $\fq$. Such a singular section will be contained in
$V_{\rm sm}$. We shall combine this result with the following
well-known estimate on the number of $\fq$--rational points of a
nonsingular complete intersection $W\subset \Pp^n$ defined over
$\fq$, of dimension $r$, degree $\delta$ and multidegree
$\boldsymbol{d}$ due to P. Deligne \cite{Deligne74}:
\begin{equation} \label{eq: estimate Deligne}
\big||W(\fq)|- p_r\big|\leq b_r'(n,\boldsymbol{d})\,q^{r/2},
\end{equation}
where $b_r'(n,\boldsymbol{d})$ denotes the $r$th primitive Betti
number of any nonsingular complete intersection of $\Pp^n$ of
dimension $r$ and multidegree $\boldsymbol{d}$.

%A further advantage of this approach is that the points whose
%existence is asserted are necessarily smooth $\fq$--rational points.
%In number--theoretic applications this is often needed, when the
%next step is to apply the Hensel lemma to deduce the existence of a
%$p$--adic point (see, e.g., **** for results in this direction).

In the sequel we shall frequently use the following explicit
expressions for $b_r'(n,\boldsymbol{d})$ with $r\in\{1,2\}$ (see,
e.g., \cite[Theorem 4.1]{GhLa02a}):
\begin{eqnarray*}
b_1'(n,\boldsymbol{d})&\!\!\!\!=&\!\!\!\!(d_1\cdots d_{n-1})(d_1\plp d_{n-1}-n-1)+2,\\
b_2'(n,\boldsymbol{d})&\!\!\!\!=&\!\!\!\!(d_1\cdots
d_{n-2})\left(\!\!\binom{n+1}{2}- (n+1)\!\!\sum_{1\le i\le
n-2}\!\!\!d_i+\!\!\sum_{1\le i\le j\le n-2}\!\!\!d_id_j\!\right)-3.
\end{eqnarray*}
\begin{remark}\label{remark: upper bound b_2}
Let $V\subset\Pp^n$ be a nonsingular complete intersection defined
over $\fq$, of dimension 2 and multidegree $\boldsymbol{d}:=(d_1\klk
d_{n-2})$. Let $D:=\sum_{i=1}^{n-2}(d_i-1)$. Observe that $\deg
V=d_1\cdots d_{n-2}$ and the following upper bound holds:
\begin{equation}\label{eq: upper bound b_2}
b_2'(n,\boldsymbol{d})\le (n-1)D^2\deg V.
\end{equation}
Indeed, we have
$$
-(n+1)\!\!\sum_{1\le i\le n-2}\!\!d_i+\!\!\sum_{1\le i\le j\le
n-2}\!\!d_id_j\le\sum_{i=1}^{n-2}
d_i\Bigg(\sum_{i=1}^{n-2}d_i-n-1\Bigg)= \sum_{i=1}^{n-2}d_i(D-3).
$$
Using the inequality $\sum_{i=1}^{n-2}d_i\le (n-1)D$, we obtain
$$b_2'(n,\boldsymbol{d})\le \deg V\left(\binom{n+1}{2}+(n-1)D(D-3)\right)
\le (n-1)D^2\deg V.$$
This shows (\ref{eq: upper bound b_2}). \end{remark}

Let $B_{\boldsymbol{d},s}:=
D^{r-s-2}\delta\big(((n-s)(r-s)+2)D+r-s-1)\big)+\delta+1$. According
to Corollary \ref{coro: existencia fibra no singular fq-definible},
if $q>\max\{B_{\boldsymbol{d},s},D^{r-s}\delta\}$ then there exists
a nonsingular linear section $S$ of $V$ defined over $\fq$ of pure
dimension $r-s-1$ contained in $V_{\rm sm}$. We are going to prove
that the number of $\fq$--rational points in $S$ is strictly
positive, showing thus that $V$ has smooth $\fq$--rational points.
We have the following result.
\begin{theorem}\label{theorem: existence}
Let $V\subset \Pp^n$ be a complete intersection defined over $\fq$,
of dimension $r\ge 2$, degree $\delta$, multidegree $\boldsymbol{d}$
and singular locus $\Sigma$ of dimension at most $s\le r-2$. If  $
q>\max\Big\{B_{\boldsymbol{d},s},D^{r-s}\delta,
\big(b_{r-s-1}'(n-s-1,\boldsymbol{d})\big)^{2/(r-s-1)}\Big\}$, then
$V$ has a smooth $\fq$--rational point.
 \end{theorem}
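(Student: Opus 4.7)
The plan is to combine the effective Bertini theorem already established with Deligne's estimate. First, since $q>\max\{B_{\boldsymbol{d},s},D^{r-s}\delta\}$, Corollary \ref{coro: existencia fibra no singular fq-definible} produces a point $\boldsymbol{\lambda}\in(\Pp^n(\fq))^{s+2}\setminus\mathcal{H}$ together with a point $y\in\Pp^{s+1}(\fq)\setminus W(L)$ such that the linear section $V_y$ is nonsingular of pure dimension $r-s-1$ and defined over $\fq$. By Remark \ref{rem: V_y is contained in V_sm}, we moreover have the inclusion $V_y\subset V_{\rm sm}$, so that every $\fq$--rational point of $V_y$ is automatically a smooth $\fq$--rational point of $V$. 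It therefore suffices to prove that $V_y(\fq)$ is nonempty.

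The variety $V_y$ is cut out of $V$ by the $s+1$ linear equations $y_jY_0-y_0Y_j=0$ ($1\le j\le s$) together with one of the $Y_i$ (after a suitable coordinate change), so $V_y$ can be viewed as a nonsingular complete intersection in $\Pp^{n-s-1}$ defined over $\fq$, of dimension $r-s-1$ and the same multidegree $\boldsymbol{d}$. By Hartshorne's connectedness theorem it is absolutely irreducible, so Deligne's inequality (\ref{eq: estimate Deligne}) applies and yields
$$\big||V_y(\fq)|-p_{r-s-1}\big|\le b_{r-s-1}'(n-s-1,\boldsymbol{d})\,q^{(r-s-1)/2}.$$

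Using the trivial lower bound $p_{r-s-1}\ge q^{r-s-1}$, it follows that $|V_y(\fq)|>0$ whenever $q^{r-s-1}>b_{r-s-1}'(n-s-1,\boldsymbol{d})\,q^{(r-s-1)/2}$, that is, whenever
$$q>\big(b_{r-s-1}'(n-s-1,\boldsymbol{d})\big)^{2/(r-s-1)},$$
which is exactly the third term in the hypothesized lower bound on $q$. Under the stated hypotheses on $q$, $V_y(\fq)$ is therefore nonempty, and any of its points is a smooth $\fq$--rational point of $V$.

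There is no serious obstacle beyond bookkeeping: the effective Bertini theorem does all the geometric work, and the only delicate point is to confirm that $V_y$ meets the hypotheses of Deligne's estimate, namely that it is an absolutely irreducible nonsingular complete intersection over $\fq$ of the expected dimension and multidegree. The dimension and nonsingularity come from Theorem \ref{theorem: Bertini}, the complete intersection structure is preserved because $V_y$ is defined in $\Pp^{n-s-1}$ by the original system $F_1,\ldots,F_{n-r}$, and absolute irreducibility follows from the Hartshorne connectedness theorem applied to a nonsingular complete intersection of positive dimension.
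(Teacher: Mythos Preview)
Your proposal is correct and follows essentially the same line as the paper's own proof: invoke Corollary~\ref{coro: existencia fibra no singular fq-definible} to obtain a nonsingular $\fq$--linear section $V_y$ of dimension $r-s-1$, apply Deligne's estimate (\ref{eq: estimate Deligne}) to it, and use the third inequality on $q$ together with $p_{r-s-1}>q^{r-s-1}$ to force $|V_y(\fq)|>0$; Remark~\ref{rem: V_y is contained in V_sm} then places the resulting point in $V_{\rm sm}$. The extra remarks you add (the complete intersection structure of $V_y$ in $\Pp^{n-s-1}$ and absolute irreducibility via Hartshorne connectedness) are harmless elaborations of points the paper treats as already established.
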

  \begin{proof}
Let $S$ be the nonsingular linear section of $V$ whose existence is
assured by Corollary \ref{coro: existencia fibra no singular
fq-definible}. Since $S$ is a nonsingular complete intersection
defined over $\fq$ of dimension $r-s-1$, from  (\ref{eq: estimate
Deligne}) it follows that
$$ |S(\fq)|\ge
p_{r-s-1}-b'q^{\frac{r-s-1}{2}}>q^{\frac{r-s-1}{2}}
\big(q^{\frac{r-s-1}{2}}-b'\big), $$
where $b':=b_{r-s-1}'(n-s-1,\boldsymbol{d})$. Our conditions
immediately implies that the right--hand side in the previous
expression is positive. Furthermore, according to Remark \ref{rem:
V_y is contained in V_sm}, we have that $S\subset V_{\rm sm}$,
finishing the proof of the theorem. \end{proof}

Next we discuss two particular instances of this result.

\begin{corollary}\label{coro: existence codim 2}
With notations and assumptions as in Theorem \ref{theorem:
existence}, if
$$q>\left\{\!\!\begin{array}{cl}
\big(\delta(D-2)+2\big)^2,&\textit{for}\ D\ge 5\ \textit{or}\ D=4\
\textit{and}\ n-r>1,\\
\big(2(n-r+3)D+2\big)\delta+1,\!\!&\textit{otherwise},
\end{array}\right.$$
then $V$ has a smooth $\fq$--rational point.
\end{corollary}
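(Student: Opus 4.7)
The plan is to apply Theorem~\ref{theorem: existence} specialized to $s=r-2$ and verify that the stated hypothesis exceeds the maximum of the three quantities appearing there. Substituting $s=r-2$ (so $r-s=2$, $r-s-1=1$, and $n-s-1=n-r+1$), the three bounds simplify to
\[
B_{\boldsymbol{d},r-2}=\bigl(2(n-r+3)D+2\bigr)\delta+1,\qquad D^{r-s}\delta=D^2\delta,\qquad \bigl(b'_1(n-r+1,\boldsymbol{d})\bigr)^2.
\]
For the last one I would use the explicit formula for $b'_1$ displayed above together with the identity $\sum_{i=1}^{n-r}d_i=D+(n-r)$ to rewrite it as $(\delta(D-2)+2)^2$.

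The next step is to compare these three quantities pairwise. A direct expansion shows that for $D\ge 3$ one has $(\delta(D-2)+2)^2\ge D^2\delta$, since $\delta(D-2)^2 + 4(D-2) + 4/\delta \ge (D-2)^2+4(D-2)+4=D^2$. Hence the $D^2\delta$ bound is subsumed by $(\delta(D-2)+2)^2$ throughout the relevant range, and the essential comparison is between $B_{\boldsymbol{d},r-2}$ and $(\delta(D-2)+2)^2$; writing out the difference gives
\[
(\delta(D-2)+2)^2-B_{\boldsymbol{d},r-2}=\delta^2(D-2)^2-\delta\bigl(2D(n-r+1)+10\bigr)+3.
\]
When the quadratic-in-$\delta$ term dominates, $(\delta(D-2)+2)^2$ is the maximum of the three; otherwise $B_{\boldsymbol{d},r-2}$ is.

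The hard part will be determining the sign of this difference in each regime. For $D\ge 5$, and for $D=4$ with $n-r>1$, I would use the lower bounds $\delta\ge 2^{n-r}$ and (when $n-r=1$) $\delta=D+1$ to show that the leading term dominates, so $(\delta(D-2)+2)^2\ge B_{\boldsymbol{d},r-2}$. In the complementary range ($D\le 3$, or $D=4$ and $n-r=1$) the inequality reverses and $B_{\boldsymbol{d},r-2}$ becomes the maximum. The main technical obstacle is thus the boundary regime $D\in\{4,5\}$ with small $n-r$, where the case analysis cannot rely on loose asymptotic estimates and must be checked by direct substitution; once the dominant bound has been pinned down in each case, the corollary follows immediately from Theorem~\ref{theorem: existence}.
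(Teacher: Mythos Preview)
Your proposal is correct and follows essentially the same route as the paper: specialize Theorem~\ref{theorem: existence} to $s=r-2$, show that $D^2\delta$ is never the maximum of the three bounds (the paper splits this into $D\le 2$, where $D^2\delta\le B_{\boldsymbol{d},r-2}$, and $D\ge 3$, where $D^2\delta\le(\delta(D-2)+2)^2$; you treat only $D\ge 3$ explicitly, but the $D\le 2$ case is trivially absorbed by $B_{\boldsymbol{d},r-2}$ in your ``otherwise'' branch), and then compare $B_{\boldsymbol{d},r-2}$ with $(\delta(D-2)+2)^2$ case by case. The paper handles $D\ge 6$ via the inequality $n-r\le D$ and leaves the remaining values of $D$ to ``elementary calculations,'' just as you propose direct substitution for the boundary regime $D\in\{4,5\}$.
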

\begin{proof}
Observe that $b_1'(n-r+1,\boldsymbol{d})=\delta(D-2)+2$. Therefore,
applying Theorem \ref{theorem: existence} with $s=r-2$, we conclude
that, if
\begin{equation}\label{eq: coro existence points r-2}
q>\max\Big\{\big(2(n-r+3)D+2\big)\delta+1,D^2\delta,
\big(\delta(D-2)+2\big)^2\Big\},
\end{equation}
then $V$ has a smooth $\fq$--rational point. For $D\le 2$ we have
$D^2\delta\le \big(2(n-r+3)D+2\big)\delta+1$, while
$D^2\delta\le(\delta(D-2)+2)^2$ for $D\ge 3$. As a consequence, we
see that (\ref{eq: coro existence points r-2}) is equivalent to
\begin{equation}\label{eq: coro existence points r-2 bis}
q>\max\Big\{\big(2(n-r+3)D+2\big)\delta+1,
\big(\delta(D-2)+2\big)^2\Big\}.
\end{equation}

If $D\ge 6$, then we have
$$
\big(\delta(D-2)+2\big)^2\ge \big(2(D+3)D+2\big)\delta+1\ge
\big(2(n-r+3)D+2\big)\delta+1.
$$
Combining this inequality with (\ref{eq: coro existence points r-2
bis}) and elementary calculations we deduce the statement of the
corollary.

%Next we consider the first assertion. As a consequence of the
%Chevalley--Warning theorem (see, e.g., \cite[Corollary 6.9]{LiNi83})
%it follows that if $\sum_{i=1}^{n-r}d_i\le n$, then $V$ has an
%$\fq$--rational point. Since $\sum_{i=1}^{n-r}d_i\le\delta$, we
%immediately deduce that if $\delta\le n$, then $V$ has an
%$\fq$--rational point.
%
%Finally, assume that $\sum_{i=1}^{n-r}d_i>n$ holds. This implies
%$\delta>n$ and $D>r\ge 2$.  For $3\le D\le 7$ a straightforward
%calculation shows that the right--hand side of (\ref{eq: coro
%existence points r-2}) is upper bounded by
%$4\big(\delta(D-2)+2\big)^2$. This finishes the proof of the
%corollary.
\end{proof}
\begin{corollary}\label{coro: existence codim 3}
Let notations and assumptions be as in Theorem \ref{theorem:
existence}. Suppose further that the singular locus of $V$ has
dimension at most $r-3\ge 0$. If $q>3D(D+2)^2\delta$, then $V$ has a
smooth $\fq$--rational point.
\end{corollary}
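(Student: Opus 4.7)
The plan is to apply Theorem \ref{theorem: existence} with $s=r-3$ and verify that the hypothesis $q>3D(D+2)^2\delta$ dominates the three quantities appearing in the maximum in its statement. Specializing with $s=r-3$ gives $r-s-1=2$, $r-s=3$, and $r-s-2=1$, so the three bounds to beat become
$$B_{\boldsymbol{d},r-3}=D\delta\big((3(n-r+3)+2)D+2\big)+\delta+1,\quad D^3\delta,\quad b_2'(n-r+2,\boldsymbol{d}),$$
the last of these raised to the power $2/(r-s-1)=1$.

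First, since $d_i\ge 2$ for all $i$, I have $D\ge n-r\ge 1$; replacing $n-r$ by $D$ throughout gives the crude bound
$$B_{\boldsymbol{d},r-3}\le D\delta\big((3D+11)D+2\big)+\delta+1 = 3D^3\delta+11D^2\delta+2D\delta+\delta+1.$$
Comparing this term by term with the expansion $3D(D+2)^2\delta=3D^3\delta+12D^2\delta+12D\delta$ leaves a strictly positive difference for every $D\ge 1$ and $\delta\ge 1$. Second, $D^3\delta<3D(D+2)^2\delta$ is immediate. Third, by Remark \ref{remark: upper bound b_2} applied with ambient dimension $n-r+2$, I obtain $b_2'(n-r+2,\boldsymbol{d})\le(n-r+1)D^2\delta\le(D+1)D^2\delta$, which is likewise smaller than $3D(D+2)^2\delta$ since $3D(D+2)^2\ge 3D^3\ge D^3+D^2$.

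With the three bounds verified, Theorem \ref{theorem: existence} directly produces a smooth $\fq$--rational point of $V$. There is no conceptual obstacle here: all the real work has been carried out in Theorem \ref{theorem: existence}, Corollary \ref{coro: existencia fibra no singular fq-definible}, and Remark \ref{remark: upper bound b_2}. The only care needed is in the arithmetic of the three comparisons, specifically in making sure the additive constants ``$+\delta+1$'' inside $B_{\boldsymbol{d},r-3}$ are absorbed by the slack coming from the quadratic and linear terms in $3D(D+2)^2$; replacing $n-r$ by its upper bound $D$ at the outset makes this transparent.
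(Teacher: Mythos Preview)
Your proof is correct and follows essentially the same approach as the paper: apply Theorem \ref{theorem: existence} with $s=r-3$, bound $b_2'(n-r+2,\boldsymbol{d})$ via Remark \ref{remark: upper bound b_2}, and then use $n-r\le D$ to show that $3D(D+2)^2\delta$ dominates each of the three quantities in the maximum. The only cosmetic difference is that the paper first absorbs the $b_2'$ term into $B_{\boldsymbol{d},r-3}$ (reducing the maximum to two terms) before comparing with $3D(D+2)^2\delta$, whereas you compare all three terms to $3D(D+2)^2\delta$ directly; the arithmetic is the same.
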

\begin{proof}
We apply Theorem \ref{theorem: existence} with $s=r-3$. According to
Remark \ref{remark: upper bound b_2}, we have
$b_2'(n-r+2,\boldsymbol{d})\le(n-r+1)D^2\delta$. Therefore, Theorem
\ref{theorem: existence} shows that a sufficient condition for the
existence of a smooth $\fq$--rational point of $V$ is
\begin{equation}\label{eq: aux proof coro existence codim 3}
q>\max\{D^3\delta,D\delta\big((3(n-r+3)+2)D+2\big)+\delta+1\}.
\end{equation}
%
%If $\sum_{i=1}^nd_i\le n$, then the Chevalley--Warning theorem (see,
%e.g., \cite[Corollary 6.9]{LiNi83}) proves that $V$ has an
%$\fq$--rational point. On the other hand, for $\sum_{i=1}^nd_i>n$,
%and thus $\delta>n$ and $D>r\ge 3$, it is easy to see that (\ref{eq:
%aux proof coro existence codim 3}) immediately implies the statement
%of the corollary.
Using the inequality $n-r\le D$, we deduce that
$$
D\delta\big((3(n-r+3)+2)D+2\big)+\delta+1 \le 3D(D+2)^2\delta,
$$
which immediately implies the statement of the corollary.\end{proof}

\section{Estimates on the number of $\fq$--rational points}
\label{section:estimacion}
In this section we obtain estimates on $|V(\fq)|$ for a complete
intersection $V \subset \Pp^n$ of dimension $r$, degree $\delta$ and
multidegree $\boldsymbol{d}:=(d_1\klk d_{n-r})$ with $d_1\ge\cdots
\ge d_{n-r}\ge 2$ for which the singular locus has codimension at
least 2 or 3.

Fix $s\in\{r-2,r-3\}$. Let $D:=\sum_{i=1}^{n-r}(d_i-1)$. Then
Theorem \ref{theorem: dimension polar variety intrinseca} and
Corollary \ref{coro: singular locus} show that there exists an
hypersurface $\mathcal{H}:=\mathcal{H}_1\cup
\mathcal{H}_2\subset(\Pp^n)^{s+2}$, defined by a multihomogeneous
polynomial of $\cfq[\boldsymbol{\Lambda}]$ of degree at most
$$B_{\boldsymbol{d},s}:=D^{r-s-2}\delta\big(((n-s)(r-s)+2)D+r-s-1\big)+\delta+1$$
in each group of variables $\Lambda_i$, with the following property:
for any $\boldsymbol{\lambda}:=(\lambda_0\klk
\lambda_{s+1})\in(\Pp^n)^{s+2}\setminus\mathcal{H}$, let
$Y_j:=\lambda_j\cdot X$ for $0\le j\le s+1$, let $\pi:V\to\Pp^{s+1}$
be the linear mapping defined by $Y_0\klk Y_{s+1}$ and let
$L:=\{Y_0=\cdots=Y_{s+1}=0\}\subset\Pp^n$. Then the following
conditions hold:
\begin{itemize}
  \item[$(i)$] the polar variety $\M(L)$ has dimension $s$,
  \item[$(ii)$] any fiber $\pi^{-1}(y)$ is of pure dimension
  $r-s-1$,
  \item[$(iii)$] the set of exceptional points of $\pi$ is nonsingular of pure dimension
  $r-s-2$.
\end{itemize}
For such a matrix $\boldsymbol{\lambda}$, our effective version of
the Bertini smoothness theorem (Theorem \ref{theorem: Bertini})
asserts that there exists a variety $W_L:=W(L)\subset \Pp^{s+1}$ of
dimension at most $s$ and degree at most $D^{r-s}\delta$ such that
for every $y\in\Pp^{s+1}\setminus W_L$, the fiber $\pi^{-1}(y)$ is a
nonsingular complete intersection. Since $\pi^{-1}(y)$ is
$\fq$--definable for every $y\in\Pp^{s+1}(\fq)$, we can estimate the
number $N_y:=|V_y(\fq)|$ for $y\in\Pp^{s+1}(\fq)\setminus W_L$ using
Deligne's estimate (\ref{eq: estimate Deligne}). On the other hand,
fibers of points in $W_L$ do not make a significant contribution to
the asymptotic behavior of the number of rational points of $V$.
More precisely, we have the following result.
\begin{theorem}\label{theorem: estimate for general s}
Let $V\subset \Pp^n$ be a complete intersection defined over $\fq$,
of dimension $r\ge 2$, degree $\delta$, multidegree $\boldsymbol{d}$
and singular locus of dimension at most $s\in\{r-2,r-3\}$. Then the
following estimate holds:
$$\big||V(\fq)|-p_r\big|\leq b_{r-s-1}'q^{(r+s+1)/2}+
A(n,s,\boldsymbol{d})\,q^{r-1},$$
where $A(n,s,\boldsymbol{d}):=2b_{r-s-1}'+
2\big(7D^{r-s}\delta+1\big)(\delta-1)$ and
$b_{r-s-1}':=b_{r-s-1}'(n-s-1,\boldsymbol{d})$ is the $(r-s-1)$th
primitive Betti number of any nonsingular complete intersection of
$\Pp^{n-s-1}$ of dimension $r-s-1$ and multidegree $\boldsymbol{d}$.
\end{theorem}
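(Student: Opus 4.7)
The plan is to fiber $V$ over $\Pp^{s+1}$ by a generic $\fq$--definable linear projection, apply Deligne's estimate (\ref{eq: estimate Deligne}) to the nonsingular fibers supplied by the effective Bertini theorem (Theorem \ref{theorem: Bertini}), and control the remaining contributions by elementary bounds together with a combinatorial identity between the quantities $p_k$. Assuming $q>B_{\boldsymbol{d},s}$, Corollary \ref{coro: existencia no-cero pol multih} applied to the hypersurface $\mathcal{H}:=\mathcal{H}_1\cup\mathcal{H}_2\subset(\Pp^n)^{s+2}$ (whose multidegree in each block of variables is at most $B_{\boldsymbol{d},s}$) yields a point $\boldsymbol{\lambda}\in(\Pp^n(\fq))^{s+2}\setminus\mathcal{H}$. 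Setting $Y_j:=\lambda_j\cdot X$, $L:=\{Y_0=\cdots=Y_{s+1}=0\}$ and $\pi:V\dashrightarrow\Pp^{s+1}$ the associated rational map, Theorem \ref{theorem: Bertini} produces an $\fq$--definable subset $W_L\subset\Pp^{s+1}$ of dimension at most $s$ and degree at most $D^{r-s}\delta$ such that $V_y:=\overline{\pi^{-1}(y)}$ is a nonsingular complete intersection of dimension $r-s-1$ and multidegree $\boldsymbol{d}$ in a linear subspace of $\Pp^n$ of dimension $n-s-1$, for every $y\in\Pp^{s+1}\setminus W_L$.

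The next step is a fiber decomposition. Since $V\cap L\subset V_y$ for every $y\in\Pp^{s+1}$ while each $x\in V\setminus L$ lies in a unique fiber, I obtain
$$|V(\fq)|=\sum_{y\in\Pp^{s+1}(\fq)}|V_y(\fq)|\,-\,(p_{s+1}-1)\cdot|(V\cap L)(\fq)|.$$
For the ``good'' fibers $y\in\Pp^{s+1}(\fq)\setminus W_L(\fq)$, Deligne's estimate (\ref{eq: estimate Deligne}) gives $\big||V_y(\fq)|-p_{r-s-1}\big|\le b'q^{(r-s-1)/2}$ with $b':=b_{r-s-1}'(n-s-1,\boldsymbol{d})$. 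For the ``bad'' fibers $y\in W_L(\fq)$ and for the exceptional locus $V\cap L$, I would use the trivial bounds $|V_y(\fq)|\le\delta\, p_{r-s-1}$ and $|(V\cap L)(\fq)|\le\delta\, p_{r-s-2}$ supplied by (\ref{eq: projective upper bound}), together with $|W_L(\fq)|\le D^{r-s}\delta\cdot p_s$.

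The assembly step relies on the identity $p_{s+1}\cdot p_{r-s-1}=p_r+(p_{s+1}-1)\cdot p_{r-s-2}$, which is a direct geometric--series calculation. It rewrites $|V(\fq)|-p_r$ as a sum of three error terms: the propagated Deligne error $\sum_{y\notin W_L(\fq)}(|V_y(\fq)|-p_{r-s-1})$, which by splitting $p_{s+1}=q^{s+1}+p_s$ and using $p_s\le 2q^s$ contributes the leading term $b'q^{(r+s+1)/2}$ plus a piece of order $b'q^{r-1}$; the bad--fiber contribution $\sum_{y\in W_L(\fq)}(|V_y(\fq)|-p_{r-s-1})$, bounded by $|W_L(\fq)|(\delta-1)\,p_{r-s-1}$ and hence of order $D^{r-s}\delta(\delta-1)q^{r-1}$ via $p_s\,p_{r-s-1}\le 4q^{r-1}$; and the overcount $(p_{s+1}-1)\big(p_{r-s-2}-|(V\cap L)(\fq)|\big)$, of order $(\delta-1)q^{r-1}$. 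Collecting these three contributions yields a coefficient of $q^{r-1}$ of the shape $2b'+c_1D^{r-s}\delta(\delta-1)+c_2(\delta-1)$, which the statement absorbs into $A(n,s,\boldsymbol{d})=2b'+2(7D^{r-s}\delta+1)(\delta-1)$ after applying uniform slack in the estimates $p_k\le 2q^k$.

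The main obstacle I anticipate is the bookkeeping of the numerical coefficients: three distinct error sources all contribute to the $q^{r-1}$ term and must be combined to yield the specific constant $2(7D^{r-s}\delta+1)(\delta-1)$. A subsidiary issue is the small--$q$ range $q\le B_{\boldsymbol{d},s}$, in which no $\fq$--rational $\boldsymbol{\lambda}\notin\mathcal{H}$ is guaranteed; here one must verify that the trivial bound $|V(\fq)|\le\delta\, p_r$ already implies the claimed estimate, because $A(n,s,\boldsymbol{d})q^{r-1}$ dominates $(\delta-1)p_r$ throughout this finite range of $q$ in view of the polynomial growth of $B_{\boldsymbol{d},s}$ in $D$ and $\delta$.
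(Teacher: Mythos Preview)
Your approach is essentially identical to the paper's: fiber over $\Pp^{s+1}$ via an $\fq$--rational linear projection chosen outside $\mathcal{H}$, use the identity $p_r=p_{s+1}p_{r-s-1}-(p_{s+1}-1)p_{r-s-2}$, apply Deligne's estimate to the nonsingular fibers, and control the bad fibers and the exceptional locus by (\ref{eq: projective upper bound}); even the numerical shortcuts $p_{s+1}\le q^{s+1}+2q^s$ and $p_s\,p_{r-s-1}\le 4q^{r-1}$ match.

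The one place where your sketch is thinner than the paper is the small--$q$ range $q\le B_{\boldsymbol{d},s}$. Your proposed fallback ``the trivial bound $|V(\fq)|\le\delta p_r$ already implies the estimate'' does not quite close for $D\le 2$: the factor $2$ in $(\delta-1)p_r\le 2(\delta-1)q\cdot q^{r-1}\le 2B_{\boldsymbol{d},s}(\delta-1)q^{r-1}$ forces the inequality $B_{\boldsymbol{d},s}\le 7D^{r-s}\delta+1$, which fails for $D=2$, $s=r-3$ (and for $D=1$ the quantity $b_{r-s-1}'$ vanishes, so the margin is even tighter). The paper therefore splits cases: for $D=1$ it quotes the exact point--count for quadrics; for $D=2$ it uses the Serre--type bound $|V(\fq)|\le\delta q^r+p_{r-1}$ (which suppresses the troublesome factor $2$) together with $B_{\boldsymbol{d},s}\le 10\cdot 2^{r-s}\delta+1$; and only for $D\ge 3$ does it invoke the trivial bound as you suggest. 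Apart from this case distinction, your outline is the paper's proof.
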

\begin{proof}
First we observe that, if $D=1$, then $V$ is a quadric, and the
statement of the theorem follows from known results on the number of
rational points of quadrics (see, e.g., \cite[Theorem 2E]{Schmidt76}
or \cite[Section 6.2]{LiNi83}).

Next we claim that we may assume without loss of generality that
$q>B_{\boldsymbol{d},s}$ holds. Indeed, suppose that $q\le
B_{\boldsymbol{d},s}$ holds. For $D=2$ we have that $V$ is either a
cubic hypersurface or an intersection of two quadrics. In both cases
we have the upper bound $|V(\fq)|\le \delta q^r+p_{r-1}$ (see
\cite{Serre91} and \cite{EdLiXi09}), which implies
$\big||V(\fq)|-p_r\big|\le(\delta-1)q^r\le
B_{\boldsymbol{d},s}(\delta-1)q^{r-1}$. Then the inequality
$B_{\boldsymbol{d},s}\le 10\cdot 2^{r-s}\cdot\delta+1$ completes the
proof of the theorem in this case.

On the other hand, for $D\ge 3$, according to (\ref{eq: projective
upper bound}) we have $|V(\fq)|\le \delta p_r$, and therefore
$\big||V(\fq)|-p_r\big|\le(\delta-1)p_r\le
2B_{\boldsymbol{d},s}(\delta-1)q^{r-1}$. As a consequence, from the
inequality $B_{\boldsymbol{d},s}\le 7D^{r-s}\delta+1$ we easily
deduce the statement of the theorem in this case. This finishes the
proof of our claim.

For $q>B_{\boldsymbol{d},s}$, combining Theorem \ref{theorem:
dimension polar variety intrinseca} and Corollary \ref{coro:
singular locus} with Corollary \ref{coro: existencia no-cero pol
multih} we deduce that there exists $\boldsymbol{\lambda}
\in(\Pp^n(\fq))^{s+2}$ such that conditions $(i)$--$(iii)$ above are
satisfied.

Let $V_y$ be the linear section of $V$ which is obtained as the
Zariski closure of the fiber $\pi^{-1}(y)$ of an arbitrary point
$y\in\Pp^{s+1}$. We express $|V(\fq)|$ in terms of the quantities
$N_y:=|V_y(\fq)|$ with $y\in\Pp^{s+1}(\fq)$:
\begin{equation}\label{eq:auxiliar estimate 1 dim s}|V(\fq)|\ =
\sum_{y\in \Pp^{s+1}(\fq)}( N_y-e)+e= \sum_{y\in \Pp^{s+1}(\fq)}
N_y-(p_{s+1}-1)e,
\end{equation}
where $e:=|(V\cap L)(\fq)|$. Since $V\cap L$ has dimension $r-s-2$,
we have that $e\le\delta p_{r-s-2}$, and thus
$|e-p_{r-s-2}|\le(\delta-1)p_{r-s-2}$, holds.

Subtracting $p_r$ at both sides of (\ref{eq:auxiliar estimate 1 dim
s}) and taking into account the identity
$p_r=p_{s+1}p_{r-s-1}-(p_{s+1}-1)p_{r-s-2}$, we obtain:
\begin{eqnarray}\big|{|V(\fq)|} -
p_r\big|&\le & \sum_{y\in \Pp^{s+1}(\fq)} \left |N_y -
p_{r-s-1} \right| \, + \,(p_{s+1}-1)(\delta-1)p_{r-s-2}\nonumber\\
&\le&
 \sum_{y\in \Pp^{s+1}(\fq)}  \left |N_y -
p_{r-s-1} \right| \, + \, 2(\delta-1)q^{r-1}. \label{eq:auxiliar
estimate 2 dim s}
\end{eqnarray}

Let  $W_L\subset\Pp^{s+1}$ be the variety of the statement of
Theorem \ref{theorem: Bertini}. We can decompose the first term of
the right--hand side of (\ref{eq:auxiliar estimate 2 dim s}) as:
$$\sum_{y\in \Pp^{s+1}(\fq)} \left |N_y - p_{r-s-1} \right|=
\sum_{y\notin  W_L(\fq)} \left |N_y - p_{r-s-1} \right|+ \sum_{y\in
W_L(\fq)} \left |N_y - p_{r-s-1}\right|.$$

In order to estimate the first term in the right--hand side of the
above expression, Theorem \ref{theorem: Bertini} asserts that, if $y
\notin W_L(\fq)$, then $V_y$ is a nonsingular complete intersection
of $\Pp^{n-s-1}$ defined over $\fq$, of pure dimension $r-s-1$,
degree $\delta$ and multidegree $\boldsymbol{d}$. By (\ref{eq:
estimate Deligne}) we have $|N_y - p_{r-s-1}|\leq
b_{r-s-1}'q^{(r-s-1)/2}$. Therefore, we obtain
\begin{equation} \label{eq:sumafibrasregulares dim s}
\sum_{y\notin W_L(\fq)}\left |N_y - p_{r-s-1} \right|\leq
b_{r-s-1}'q^{\frac{r-s-1}{2}}p_{s+1}\le
b_{r-s-1}'q^{\frac{r+s+1}{2}} + 2b_{r-s-1}'q^{r-1}.
\end{equation}

On the other hand, for $y \in W_L(\fq)$ we have $N_y\le \delta
p_{r-s-1}$. Hence, taking into account that $\delta\ge 2$ holds, we
obtain $|N_y-p_{r-s-1}|\leq(\delta-1) p_{r-s-1}$. From (\ref{eq:
projective upper bound}) it follows that $|W_L(\fq)|\le\deg W_L\cdot
p_s$ holds and thus
\begin{equation}
\label{eq:sumafibrassingulares dim s} \sum_{y\in W_L(\fq)}\left |N_y
- p_{r-s-1} \right|\leq (\delta-1) p_{r-s-1}\cdot \deg W_L\cdot
p_s\leq 4(\delta-1) \deg W_L\cdot q^{r-1} .\end{equation}

Combining (\ref{eq:auxiliar estimate 2 dim s}),
(\ref{eq:sumafibrasregulares dim s}), (\ref{eq:sumafibrassingulares
dim s}), we conclude that
$$
\big||V(\fq)|-p_r\big|\leq
b_{r-s-1}'q^{\frac{r+s+1}{2}}+2\left(b_{r-s-1}'+
(2D^{r-s}\delta+1)(\delta-1)\right)q^{r-1}.
$$
From this estimate we easily deduce the statement of the theorem.
\end{proof}

Our next result is concerned with the number of smooth
$\fq$--rational points.
\begin{theorem}\label{theorem: estimate for general s smooth}
Let notations and assumptions be as in Theorem \ref{theorem:
estimate for general s}. Then the following estimate holds:
$$\big||V_{\rm sm}(\fq)|-p_r\big|\leq b_{r-s-1}'q^{\frac{r+s+1}{2}}+
B(n,s,\boldsymbol{d})\,q^{r-1},$$
where $B(n,s,\boldsymbol{d}):=2b_{r-s-1}'+
2\big(2D^{r-s}\delta+1\big)(\delta-1)+2(s+2)(\delta-1)B_{\boldsymbol{d},s}$.
\end{theorem}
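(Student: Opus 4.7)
The plan is to follow the same strategy as the proof of Theorem \ref{theorem: estimate for general s}, namely to decompose $V$ into fibers of a suitably chosen linear map $\pi\colon V\to\Pp^{s+1}$ and estimate the contribution of each fiber, but now keeping track only of smooth rational points. Exactly as in that proof, I would split the argument according to whether $q$ exceeds $B_{\boldsymbol{d},s}$. If $q\le B_{\boldsymbol{d},s}$, then (\ref{eq: projective upper bound}) yields the trivial estimate $|V_{\rm sm}(\fq)|\le|V(\fq)|\le\delta\, p_r$, hence $\big||V_{\rm sm}(\fq)|-p_r\big|\le(\delta-1)p_r$, which combined with $p_r\le(r+1)q^r$ and the hypothesis $q\le B_{\boldsymbol{d},s}$ becomes a contribution of order $(r+1)(\delta-1)B_{\boldsymbol{d},s}q^{r-1}$; the factor $s+2$ in $B(n,s,\boldsymbol{d})$ absorbs this, since $r+1\le 2(s+2)$ for $s\in\{r-2,r-3\}$.

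For the main regime $q>B_{\boldsymbol{d},s}$, I would invoke Corollary \ref{coro: existencia no-cero pol multih} to pick $\boldsymbol{\lambda}\in(\Pp^n(\fq))^{s+2}\setminus\mathcal{H}$, with $\mathcal{H}:=\mathcal{H}_1\cup\mathcal{H}_2$. This yields a linear map $\pi:V\to\Pp^{s+1}$ satisfying conditions $(i)$--$(iii)$ stated at the beginning of Section \ref{section:estimacion}, together with a subvariety $W_L\subset\Pp^{s+1}$ of dimension at most $s$ and degree at most $D^{r-s}\delta$, such that for every $y\notin W_L$ the fiber $V_y$ is a nonsingular complete intersection of dimension $r-s-1$ and multidegree $\boldsymbol{d}$ contained in $V_{\rm sm}$ (the inclusion by Remark \ref{rem: V_y is contained in V_sm}). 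The key new ingredient is Remark \ref{remark: polar var intersection L}: the set of exceptional points $V\cap L$ of $\pi$ is itself contained in $V_{\rm sm}$, which permits the clean identity
\[
|V_{\rm sm}(\fq)|=\sum_{y\in\Pp^{s+1}(\fq)}|(V_y\cap V_{\rm sm})(\fq)|-(p_{s+1}-1)\,e,\qquad e:=|(V\cap L)(\fq)|,
\]
in perfect parallel with (\ref{eq:auxiliar estimate 1 dim s}).

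Subtracting $p_r=p_{s+1}p_{r-s-1}-(p_{s+1}-1)p_{r-s-2}$ and applying the triangle inequality reduces the task to bounding the fiberwise deviations $\big||(V_y\cap V_{\rm sm})(\fq)|-p_{r-s-1}\big|$ and the quantity $|e-p_{r-s-2}|$. For $y\notin W_L(\fq)$ one has $|(V_y\cap V_{\rm sm})(\fq)|=|V_y(\fq)|$, and Deligne's estimate (\ref{eq: estimate Deligne}) provides $b_{r-s-1}'q^{(r-s-1)/2}$ per fiber; summing over $y\in\Pp^{s+1}(\fq)$ produces the main term $b_{r-s-1}'q^{(r+s+1)/2}$ plus a remainder of order $b_{r-s-1}'q^{r-1}$. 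For $y\in W_L(\fq)$, the trivial inequalities $0\le|(V_y\cap V_{\rm sm})(\fq)|\le|V_y(\fq)|\le\delta p_{r-s-1}$ bound the fiberwise deviation by $(\delta-1)p_{r-s-1}$; summing via $|W_L(\fq)|\le(\deg W_L)p_s\le 2D^{r-s}\delta\, q^s$ contributes the middle term $2(2D^{r-s}\delta+1)(\delta-1)q^{r-1}$. Bounding $|e-p_{r-s-2}|\le(\delta-1)p_{r-s-2}$ via (\ref{eq: projective upper bound}) applied to $V\cap L$ (of dimension $r-s-2$ and degree at most $\delta$) completes the large-$q$ estimate.

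The only genuinely new input compared with the proof of Theorem \ref{theorem: estimate for general s} is the use of Remark \ref{remark: polar var intersection L} to guarantee $V\cap L\subset V_{\rm sm}$, which is what makes the smooth fiber decomposition go through cleanly; all remaining steps are direct adaptations. I do not anticipate any real obstacle beyond the somewhat tedious bookkeeping of constants needed to match the exact shape of $B(n,s,\boldsymbol{d})$.
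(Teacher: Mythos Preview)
Your approach is correct but differs from the paper's. You mirror the proof of Theorem~\ref{theorem: estimate for general s}: split on whether $q>B_{\boldsymbol{d},s}$, handle small $q$ trivially, and for large $q$ pick a single $\boldsymbol{\lambda}\in(\Pp^n(\fq))^{s+2}\setminus\mathcal{H}$; the observation $V\cap L\subset V_{\rm sm}$ from Remark~\ref{remark: polar var intersection L} then makes the fiber decomposition for $V_{\rm sm}(\fq)$ go through exactly as for $V(\fq)$. The paper instead avoids any case split on $q$: it writes the tautological identity
\[
\big||V_{\rm sm}(\fq)|-p_r\big|=\frac{1}{p_n^{s+2}}\sum_{\boldsymbol{\lambda}\in(\Pp^n(\fq))^{s+2}}\big||V_{\rm sm}(\fq)|-p_r\big|,
\]
splits the sum into $\boldsymbol{\lambda}\notin\mathcal{H}$ (where the fiber argument yields the sharp bound) and $\boldsymbol{\lambda}\in\mathcal{H}$ (where only the trivial bound $(\delta-1)p_r$ is used), and then invokes the multihomogeneous Serre--type bound~(\ref{eq: upper bound Serre multih}) to show that $|\mathcal{H}(\fq)|/p_n^{s+2}$ is at most of order $(s+2)B_{\boldsymbol{d},s}/q$; this is what produces the term $2(s+2)(\delta-1)B_{\boldsymbol{d},s}$. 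Your route is more elementary in that it does not require~(\ref{eq: upper bound Serre multih}), and it explains transparently why the constant $B(n,s,\boldsymbol{d})$ contains exactly the extra term $2(s+2)(\delta-1)B_{\boldsymbol{d},s}$ beyond the constants already appearing in Theorem~\ref{theorem: estimate for general s}. The paper's averaging device, on the other hand, gives a uniform argument with no dichotomy on $q$.
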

\begin{proof}
Let $\mathcal{H}_1\subset(\Pp^n)^{s+2}$ and
$\mathcal{H}_2\subset(\Pp^n)^{s+2}$ be the hypersurfaces of Theorem
\ref{theorem: dimension polar variety intrinseca} and Corollary
\ref{coro: singular locus} respectively, and let
$\mathcal{H}:=\mathcal{H}_1\cup \mathcal{H}_2\subset(\Pp^n)^{s+2}$.
Recall that $\mathcal{H}$ is defined by a multihomogeneous
polynomial of $\cfq[\boldsymbol{\Lambda}]$ of degree at most
$B_{\boldsymbol{d},s}$ in each group of variables $\Lambda_i$. We
have
\begin{align*}
\big||&V_{\rm sm}(\fq)|-p_r\big|\\&=\frac{1}{p_n^{s+2}}
\left(\sum_{\boldsymbol{\lambda}\in((\Pp^n)^{s+2}\setminus\mathcal{H})(\fq)}
\big||V_{\rm sm}(\fq)|-p_r\big|+
\sum_{\boldsymbol{\lambda}\in\mathcal{H}(\fq)}
\big||V_{\rm sm}(\fq)|-p_r\big|\right)\\
&\le\frac{1}{p_n^{s+2}}
\left(\sum_{\boldsymbol{\lambda}\in((\Pp^n)^{s+2}\setminus\mathcal{H})(\fq)}
\big||V_{\rm sm}(\fq)|-p_r\big|+
|\mathcal{H}(\fq)|(\delta-1)p_r\right).\quad \
\end{align*}
By (\ref{eq: upper bound Serre multih}) it follows that
$|\mathcal{H}(\fq)|\le p_n^{s+2}-(q^n-\min\{q,B_{\mathbf{d},s}\}
q^{n-1})^{s+2}$. Hence,
$$\frac{|\mathcal{H}(\fq)|}{(p_n)^{s+2}}
(\delta-1)p_r\le 2(s+2)(\delta-1)B_{\mathbf{d},s}q^{r-1}.$$

For each
$\boldsymbol{\lambda}\in((\Pp^n)^{s+2}\setminus\mathcal{H})(\fq)$,
Theorem \ref{theorem: Bertini} shows that there exists a variety
$W_L\subset \Pp^{s+1}$ of dimension at most $s$ and degree at most
$D^{r-s}\delta$ such that for every $y\in\Pp^{s+1}\setminus W_L$,
the Zariski closure $V_y$ of the fiber $\pi^{-1}(y)$ is a
nonsingular complete intersection contained in $V_{\rm sm}$. Then,
arguing as in the proof of Theorem \ref{theorem: estimate for
general s}, we obtain
\begin{align*}\frac{1}{p_n^{s+2}}
\sum_{\boldsymbol{\lambda}\notin\mathcal{H}(\fq)}&\big||V_{\rm
sm}(\fq)|-p_r\big|\\&\leq
b_{r-s-1}'q^{\frac{r+s+1}{2}}\!+2\left(b_{r-s-1}'+
(2D^{r-s}\delta+1)(\delta-1)\right)q^{r-1}.\end{align*}
From this inequality we easily deduce the statement of the theorem.
\end{proof}
%
%----------------------------------------------------------------
%----------------------------------------------------------------
%----------------------------------------------------------------
%
\subsection{An estimate for a normal complete intersection}
\label{section:estimacion codim 2}
In this section we consider the case $s:=r-2$ of Theorems
\ref{theorem: estimate for general s} and \ref{theorem: estimate for
general s smooth}. We have the following result.
\begin{corollary}\label{coro: estimate normal variety}
Let $V\subset \Pp^n$ be a normal complete intersection defined over
$\fq$, of dimension $r\ge 2$, degree $\delta$ and multidegree
$\boldsymbol{d}$. Then we have
\begin{eqnarray}\label{eq: estimate CMP normal var}
\big||V(\fq)|-p_r\big|&\leq&(\delta(D-2)+2)q^{r-1/2}
+14D^2\delta^2q^{r-1},\\\label{eq: estimate CMP normal var smooth}
\big||V_{\rm sm}(\fq)|-p_r\big|&\leq&(\delta(D-2)+2)q^{r-1/2}
+11(r+1)D^2\delta^2q^{r-1}.
\end{eqnarray}
\end{corollary}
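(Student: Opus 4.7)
The plan is to specialize Theorems \ref{theorem: estimate for general s} and \ref{theorem: estimate for general s smooth} to the case $s=r-2$, which is permissible because $V$ normal implies $\dim\Sigma\le r-2$. Both bounds share the same leading term, so the remaining work is to simplify the universal constants $A(n,r-2,\boldsymbol{d})$ and $B(n,r-2,\boldsymbol{d})$ appearing in those theorems.

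First I would identify the leading term. With $s=r-2$ the exponent $(r+s+1)/2$ becomes $r-1/2$, and the relevant primitive Betti number is $b_1'(n-r+1,\boldsymbol{d})$. Substituting into the formula
$$b_1'(N,\boldsymbol{d})=(d_1\cdots d_{N-1})(d_1\plp d_{N-1}-N-1)+2$$
recalled at the beginning of Section \ref{section: existence}, together with $\sum_{i=1}^{n-r} d_i=D+(n-r)$, yields $b_1'(n-r+1,\boldsymbol{d})=\delta(D-2)+2$, which produces the $q^{r-1/2}$-term in both inequalities.

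For the first inequality I would expand
$$A(n,r-2,\boldsymbol{d})=2\bigl(\delta(D-2)+2\bigr)+2\bigl(7D^2\delta+1\bigr)(\delta-1),$$
and verify via elementary algebra that the dominant summand $14D^2\delta(\delta-1)=14D^2\delta^2-14D^2\delta$ absorbs all the lower-order terms, so $A(n,r-2,\boldsymbol{d})\le 14D^2\delta^2$ once one uses $\delta\ge 2$ and $D\ge 1$. This gives \eqref{eq: estimate CMP normal var}.

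For \eqref{eq: estimate CMP normal var smooth} the constant $B(n,r-2,\boldsymbol{d})$ additionally contains the term $2r(\delta-1)B_{\boldsymbol{d},r-2}$, with
$$B_{\boldsymbol{d},r-2}=\delta\bigl((2(n-r)+6)D+1\bigr)+\delta+1,$$
which a priori depends on $n$. The key observation to remove the $n$-dependence is that each $d_i\ge 2$ (otherwise $V$ would lie in a hyperplane and one would reduce the ambient dimension), whence $n-r\le D$. Substituting gives $B_{\boldsymbol{d},r-2}\le 2D^2\delta+6D\delta+2\delta+1$, which is of the form $cD^2\delta$ for a small absolute constant $c$. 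Combining this bound on $B_{\boldsymbol{d},r-2}$ with the estimate on $A(n,r-2,\boldsymbol{d})$ from the previous paragraph and collecting all contributions yields the desired $11(r+1)D^2\delta^2$ bound. The main obstacle here is purely bookkeeping: keeping track of the absolute constants tightly enough so that the final aggregate still fits under $11(r+1)D^2\delta^2$; everything else is mechanical substitution into the two master estimates.
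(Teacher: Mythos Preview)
Your proposal is correct and follows essentially the same route as the paper: apply Theorems \ref{theorem: estimate for general s} and \ref{theorem: estimate for general s smooth} with $s=r-2$, use $b_1'(n-r+1,\boldsymbol{d})=\delta(D-2)+2$ for the leading term, and invoke $n-r\le D$ (from $d_i\ge 2$) to absorb the $n$-dependence of $B_{\boldsymbol{d},r-2}$ into the stated constants. The only difference is that you spell out the elementary inequalities in more detail than the paper does.
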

\begin{proof} Applying Theorems \ref{theorem: estimate for general
s} and \ref{theorem: estimate for general s smooth} with $s=r-2$, we
obtain
\begin{eqnarray*}
\big||V(\fq)|-p_r\big|&\leq& b_1'q^{r-1/2}+
A(n,r-2,\boldsymbol{d})\,q^{r-1},\\
\big||V_{\rm sm}(\fq)|-p_r\big|&\leq& b_1'q^{r-1/2}+
B(n,r-2,\boldsymbol{d})\,q^{r-1},\end{eqnarray*}
where $b_1':=b_1'(n-r+1,\boldsymbol{d})$,
\begin{eqnarray*}
A(n,r-2,\boldsymbol{d})&:=&2b_1'+ 2\big(7D^2\delta+1\big)(\delta-1),\\
B(n,r-2,\boldsymbol{d})&:=&2b_1'+
2\big(2D^2\delta+1\big)(\delta-1)+2r(\delta-1)B_{\mathbf{d},r-2}.
\end{eqnarray*}
Taking into account the identity $b_1'=\delta(D-2)+2$ we easily
deduce (\ref{eq: estimate CMP normal var}). On the other hand, using
the inequality $n-r\le D$ we readily obtain (\ref{eq: estimate CMP
normal var smooth}). \end{proof}

For a normal complete intersection $V$ as in Corollary \ref{coro:
estimate normal variety}, we have the following estimate (see
\cite[Corollary 6.2]{GhLa02a}):
\begin{equation}\label{eq: estimate GL normal var for estimate}
\big||V(\fq)|-p_r\big|\le (\delta(D-2)+2)q^{r-1/2}+9\cdot
2^{n-r}((n-r)d+3)^{n+1}q^{r-1},
\end{equation}
where $d:=\max_{1\le i\le n-r}d_i$. On the other hand, for
$q>2(n-r)d\delta+1$, the following estimate holds (see
\cite[Corollary 6.2]{CaMa07}):
\begin{equation}\label{eq: estimate CM normal var for estimate}
\big||V(\fq)|-p_r\big|\le (\delta(D-2)+2)q^{r-1/2}+2
\big((n-r)d\delta\big)^2q^{r-1}.
\end{equation}
These are the most accurate estimates for normal complete
intersections to the best of our knowledge.

The right--hand sides of (\ref{eq: estimate CMP normal var}),
(\ref{eq: estimate GL normal var for estimate}) and (\ref{eq:
estimate CM normal var for estimate}) have the same first terms and
different second terms. For the sake of comparison, we observe that
\begin{eqnarray*}
2^{n-r}((n-r)d+3)^{n+1}\!&\!\!\ge&\!
\big(2(n-r)\big)^{n-r}\left(\sum_{i=1}^{n-r}\frac{d_i}{n-r}\right)^{n-r}
\left(\sum_{i=1}^{n-r}d_i\right)^{r+1}\\
&\ge& \big(2(n-r)\big)^{n-r}\prod_{i=1}^{n-r}d_i
\left(\sum_{i=1}^{n-r}d_i\right)^{r+1}\\
&\ge& \big(2(n-r)\big)^{n-r}D^2\delta
\left(\sum_{i=1}^{n-r}d_i\right)^{r-1},
\end{eqnarray*}
where the mid inequality is a consequence of the AM--GM inequality.

From the previous inequalities we draw several conclusions. First of
all, for varieties of high dimension, say $r\ge (n+1)/2$, (\ref{eq:
estimate CMP normal var}) and (\ref{eq: estimate CM normal var for
estimate}) are clearly preferable to (\ref{eq: estimate GL normal
var for estimate}). In particular, for hypersurfaces the second term
in the right--hand side of both (\ref{eq: estimate CMP normal var})
and (\ref{eq: estimate CM normal var for estimate}) is roughly
quartic in $\delta$ while the one (\ref{eq: estimate GL normal var
for estimate}) contains an exponential term $\delta^{n+1}$. On the
other hand, for varieties of low dimension the second term in the
right--hand side of (\ref{eq: estimate GL normal var for estimate})
might be preferable to (\ref{eq: estimate CMP normal var}) and
(\ref{eq: estimate CM normal var for estimate}). In particular, for
curves the former is roughly linear in $\delta$ while the latter is
quadratic in $\delta$. In this sense, we may say that (\ref{eq:
estimate CMP normal var})--(\ref{eq: estimate CM normal var for
estimate}) somewhat complement (\ref{eq: estimate GL normal var for
estimate}). Finally, we observe that the right--hand side of
(\ref{eq: estimate CM normal var for estimate}) is slightly lower
than that of (\ref{eq: estimate CMP normal var}) but holds only for
$q>2(n-r)d\delta+1$, while (\ref{eq: estimate CM normal var for
estimate}) holds without any restriction on $q$.
%
%----------------------------------------------------------------
%----------------------------------------------------------------
%----------------------------------------------------------------
%
\subsection{An estimate for a complete intersection regular in
codimension 2}
\label{section:estimacion codim 3}
In this section we consider the case of a complete intersection
which is regular in codimension 2, namely $s\le r-3$. We have the
following result.
\begin{corollary}\label{coro: estimate codim 3}
Let $V\subset\Pp^n$ be a complete intersection defined over $\fq$,
of dimension $r\ge 3$, degree $\delta$ and multidegree
$\boldsymbol{d}$ for which the singular locus has dimension at most
$r-3$. Then we have the following estimates:
\begin{eqnarray}\label{eq: estimate CMP codim 3}
\big||V(\fq)|-p_r\big|&\le& 14D^3\delta^2q^{r-1},\\
\big||V_{\rm sm}(\fq)|-p_r\big|&\le& (34r-20)D^3\delta^2q^{r-1}.
\end{eqnarray}
\end{corollary}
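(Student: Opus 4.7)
The plan is to specialize Theorems \ref{theorem: estimate for general s} and \ref{theorem: estimate for general s smooth} to $s=r-3$ and then simplify the resulting constants. The crucial observation is that for $s=r-3$ the exponent of the main term satisfies $(r+s+1)/2=r-1$, so the two terms $b_{r-s-1}'q^{(r+s+1)/2}$ and $A(n,s,\boldsymbol{d})q^{r-1}$ (respectively, $B(n,s,\boldsymbol{d})q^{r-1}$) collapse into a single expression of order $q^{r-1}$. The entire problem then reduces to estimating the resulting constants in terms of $r$, $D$ and $\delta$.

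First I would estimate the primitive Betti number. For $s=r-3$ we have $r-s-1=2$, so Remark \ref{remark: upper bound b_2} combined with the inequality $n-r\le D$ (which holds since $d_i\ge 2$ forces $D\ge n-r$) gives $b_2':=b_2'(n-r+2,\boldsymbol{d})\le (D+1)D^2\delta$. Next I would substitute into $A(n,r-3,\boldsymbol{d})=2b_2'+2(7D^3\delta+1)(\delta-1)$ and verify, by elementary manipulation, that $3b_2'+2(7D^3\delta+1)(\delta-1)\le 14D^3\delta^2$. After expansion, this reduces to the inequality $3D^2\delta+2\delta-2\le 11D^3\delta$, which is immediate for $D\ge 1$, $\delta\ge 2$. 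This yields the first estimate.

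For the second estimate, I would additionally estimate $B_{\boldsymbol{d},r-3}$ by unfolding its definition and again using $n-r\le D$, obtaining $B_{\boldsymbol{d},r-3}\le 3D^3\delta+11D^2\delta+2D\delta+\delta+1$, which is $O(D^3\delta)$. Substituting into $B(n,r-3,\boldsymbol{d})=2b_2'+2(2D^3\delta+1)(\delta-1)+2(r-1)(\delta-1)B_{\boldsymbol{d},r-3}$ and expanding, the dominant $D^3\delta^2$--contribution is $6(r-1)$ coming from $2(r-1)(\delta-1)\cdot 3D^3\delta$, plus $4$ coming from $2(2D^3\delta+1)(\delta-1)$, for a total of $6r-2$. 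The remaining positive contributions in lower powers of $D$ or $\delta$ (namely $22(r-1)D^2\delta^2$, $4(r-1)D\delta^2$, $2(r-1)\delta^2$ and $2\delta$) can each be absorbed into $D^3\delta^2$ using $D\ge 1$ and $\delta\ge 2$, while the several negative terms produced by the $(\delta-1)$ factors are harmlessly discarded for $r\ge 3$. A careful tally yields a coefficient of at most $(6r-2)+(22r-22)+(4r-4)+(2r-2)+1=34r-29$, which fits inside $34r-20$.

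The main obstacle is therefore purely arithmetical: matching the numerical constants $14$ and $34r-20$ in the stated bound through the explicit bookkeeping above. The mathematical content is entirely contained in Theorems \ref{theorem: estimate for general s} and \ref{theorem: estimate for general s smooth}; the present corollary is their arithmetic specialization to $s=r-3$.
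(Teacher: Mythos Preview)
Your approach is essentially the same as the paper's: specialize Theorems \ref{theorem: estimate for general s} and \ref{theorem: estimate for general s smooth} to $s=r-3$, observe that $(r+s+1)/2=r-1$ so that the two terms merge, bound $b_2'$ via Remark \ref{remark: upper bound b_2} and $n-r\le D$, and then do arithmetic. The paper in fact writes the merged constants as $3b_2'+2(7D^3\delta+1)(\delta-1)$ and $3b_2'+2(2D^3\delta+1)(\delta-1)+2(r-1)(\delta-1)B_{\boldsymbol{d},r-3}$, exactly as you do for the first estimate.

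One small slip: in your tally for the second estimate you list only the contributions coming from $2(2D^3\delta+1)(\delta-1)$ and $2(r-1)(\delta-1)B_{\boldsymbol{d},r-3}$, but you drop the $3b_2'$ piece (the $b_2'$ from the main term plus the $2b_2'$ inside $B$). This adds at most $3(D+1)D^2\delta\le 3D^3\delta^2$, so your count of $34r-29$ becomes at most $34r-26$, still safely inside $34r-20$. With that correction the sketch is fine.
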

\begin{proof}
By Theorems \ref{theorem: estimate for general s} and \ref{theorem:
estimate for general s smooth} it follows that
\begin{eqnarray*}
\big||V(\fq)|-p_r\big|&\leq& A(n,r-3,\boldsymbol{d})q^{r-1},\\
\big||V_{\rm sm}(\fq)|-p_r\big|&\leq&
B(n,r-3,\boldsymbol{d})q^{r-1},
\end{eqnarray*}
where
\begin{eqnarray*}
A(n,r-3,\boldsymbol{d})&:=&3b_2'+ 2\big(7D^3\delta+1\big)(\delta-1),\\
B(n,r-3,\boldsymbol{d})&:=&3b_2'+
2\big(2D^3\delta+1\big)(\delta-1)+2(r-1)(\delta-1)B_{\mathbf{d},r-3},
\end{eqnarray*}
and $b_2':=b_2'(n-r+2,\boldsymbol{d})$. According to Remark
\ref{remark: upper bound b_2} we have $b_2'\le (n-r+1)D^2\delta\le
(D+1)D^2\delta$. Therefore, a simple calculation shows the statement
of the corollary. \end{proof}

With hypothesis as in the statement of Corollary \ref{coro: estimate
codim 3}, the following estimate holds (\cite[Theorem
6.1]{GhLa02a}):
\begin{equation}\label{eq: estimate GL codim3 for estimate}
\big||V(\fq)|-p_r\big|\le b_2'(n-r+2,\boldsymbol{d})q^{r-1}+9\cdot
2^{n-r}\cdot((n-r)d+3)^{n+1}q^{r-3/2}.
\end{equation}
In the comparison of (\ref{eq: estimate CMP codim 3}) and (\ref{eq:
estimate GL codim3 for estimate}) similar remarks can be made as in
the case of normal complete intersections: for high--dimensional
varieties (\ref{eq: estimate CMP codim 3}) might be preferable,
while for low--dimensional varieties (\ref{eq: estimate GL codim3
for estimate}) is likely to be better. Nevertheless, the presence of
exponentials in the second term of the right--hand side of (\ref{eq:
estimate GL codim3 for estimate}) may difficult the application of
such an estimate even for low--dimensional varieties. As an
illustration of this phenomenon, we sketch an application which
requires an estimate like (\ref{eq: estimate CMP codim 3}).
%
%----------------------------------------------------------------
%----------------------------------------------------------------
%
\subsubsection{The average value set of ``small'' families of polynomials}
Let $T$ be an indeterminate over $\fq$ and let $f\in\fq[T]$. We
define the value set $N(f)$ of $f$ as $N(f):=|\{f(c):c\in\fq\}|$
(cf. \cite{LiNi83}). Birch and Swinnerton--Dyer established the
following significant result \cite{BiSD59}: if $f\in\fq[T]$ with
$\deg(f)=d\ge 1$ is a generic polynomial, then
$$N(f)=\mu_d\,q+
\mathcal{O}(q^{1/2}),$$
where $\mu_d:=\sum_{r=1}^d{(-1)^{r-1}}/{r!}$ and the constant
underlying the $\mathcal{O}$--notation depends only on $d$. Results
on the average value $N(d,0)$ of $N(f)$ when $f$ ranges over all
monic polynomials in $\fq[T]$ with $f(0)=0$ of fixed degree were
obtained by Uchiyama \cite{Uchiyama55b} and improved by Cohen
\cite{Cohen73}. More precisely, in \cite[\S 2]{Cohen73} it is shown
that
$$N(d,0)=\sum_{r=1}^d(-1)^{r-1}\binom{q}{r}q^{1-r}=\mu_d\,q
+\mathcal{O}(1).$$
However, if some of the coefficients of $f$ are fixed, the results
on the average value of $N(f)$ are less precise. In fact, Uchiyama
\cite{Uchiyama55b} and Cohen \cite{Cohen72} obtain the result that
we now state. Let be given $s$ with $0\le s\le d-2$ and elements
$a_{d-1}\klk a_{d-s}\in\fq$. Let $\boldsymbol{a}^s:=(a_{d-s-1}\klk
a_1)$ and let $f_{\boldsymbol{a}^s}:=T^d+\sum_{i=1}^{d-1}a_iT^i$.
Then for $p:=\mathrm{char}(\fq)>d$,
\begin{equation}\label{eq: average value set}
N(d,s)=\!N(d,s;a_{d-1}\klk
a_{d-s})\!:=\frac{1}{q^{d-s-1}}\!\!\!\!\!\!\sum_{\boldsymbol{a}^s\in\fq^{d-s-1}}
\!\!\!\!\!\!N(f_{\boldsymbol{a}^s})= \mu_d\,q+\mathcal{O}(q^{1/2}),
\end{equation}
the constant underlying the $\mathcal{O}$--notation depends only on
$d$ and $s$. Our aim is to obtain an strengthened explicit version
of (\ref{eq: average value set}) which holds without any restriction
on $p$.

For this purpose, we sketch our approach, which relies essentially
on Corollary \ref{coro: estimate codim 3} (proofs will appear
elsewhere). We start with the following identity:
\begin{equation}\label{eq: our formula for value sets}
N(d,s)=\sum_{r=1}^{d-s}(-1)^{r-1}\binom{q}{r}q^{1-r}+
\frac{1}{q^{d-s-1}}\sum_{r=d-s+1}^{d}(-1)^{r-1}\chi(d,s,r),
\end{equation}
where $\chi(d,s,r)$ denotes the number of subsets $\mathcal{X}_r$ of
$\fq$ with exactly $r$ elements such that
$f:=T^d+\sum_{i=d-s}^{d-1}a_iT^i$ admits an interpolant $g\in\fq[T]$
of degree at most $d-s-1$, namely $f|_{\mathcal{X}_r}
=g|_{\mathcal{X}_r}$. A subset $\mathcal{X}_r\subset\fq$ with this
property is called {\em allowable} for $f$.

In this way, we see that the asymptotic behavior of $N(d,s)$ is
determined by that of $\chi(d,s,r)$ for $d-s+1\le r\le d$.
Concerning the latter, we have the following result.
\begin{theorem}\label{theorem: reduction value sets to counting}
Fix $r$ with $d-s+1\le r\le d$ and let $X_1\klk X_r$ be
indeterminates over $\fq$. Suppose that $1\le s\le d/2$. Then there
exists polynomials $R_{d-s}\klk R_{r-1}\in\fq[X_1\klk X_r]$ with the
following properties:
\begin{itemize}
%  \item $\deg R_j=d-j$ for $d-s\le j\le r-1$.
  \item $\mathcal{X}_r:=\{x_1\klk x_r\}$ is allowable for $f$ if and
  only if the equality $R_j(x_1\klk x_r)=0$ holds for $d-s\le j\le r-1$.
  \item Let $V\subset\Pp^r$ the projective closure of the affine
  variety of $\A^r$ defined by the polynomials $R_{d-s}\klk R_{r-1}$. Then $V$ is an
  ideal--theoretic complete intersection defined over $\fq$ of dimension
  $d-s$ and degree $s!/(d-r)!$, whose singular locus has codimension
  at least $3$.
  \item Let $V^0:=V\cap\{X_0=0\}\subset\Pp^{r-1}$. Then $V^0$ is an
  ideal--theoretic complete intersection defined over $\fq$ of dimension
  $d-s-1$ and degree $s!/(d-r)!$, whose singular locus has codimension
  at least $3$.
\end{itemize}
\end{theorem}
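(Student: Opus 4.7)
The plan is to construct $R_{d-s}\klk R_{r-1}$ via the Euclidean division of $f_{\boldsymbol{a}^s}$ by the generic monic polynomial $\pi(T):=\prod_{i=1}^r(T-X_i)\in\fq[X_1\klk X_r][T]$, viewed as a univariate polynomial in $T$ with coefficients in $\fq[X_1\klk X_r]$. Writing $f_{\boldsymbol{a}^s}=\pi\cdot h+\rho$ with $\deg_T h=d-r$ and $\deg_T \rho<r$, the identity $\rho(x_i)=f_{\boldsymbol{a}^s}(x_i)$ at each root $x_i$ of $\pi$ shows that $\mathcal{X}_r:=\{x_1\klk x_r\}$ is allowable for $f_{\boldsymbol{a}^s}$ exactly when $\rho$, specialized at $(X_1\klk X_r)=(x_1\klk x_r)$, has $T$--degree at most $d-s-1$: i.e.\ when the coefficients of $T^{d-s}\klk T^{r-1}$ in $\rho$ all vanish at $(x_1\klk x_r)$. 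Defining $R_j\in\fq[X_1\klk X_r]$ as the coefficient of $T^j$ in $\rho$ for $d-s\le j\le r-1$ then establishes the first bullet.

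For the remaining two bullets, I would exploit the triangular structure of the $R_j$ in symmetric--function coordinates $e_k:=e_k(X_1\klk X_r)$. Writing $\pi=T^r+\sum_{k=1}^r(-1)^k e_k T^{r-k}$ and $h=T^{d-r}+\sum_{l=0}^{d-r-1}h_l T^l$, the vanishing of the coefficients of $T^r\klk T^{d-1}$ in $\rho$, together with the hypothesis $r\ge d-s+1$ (which ensures that each $a_j$ with $r\le j\le d-1$ is among the \emph{fixed} coefficients), recursively determines $h_{d-r-1}\klk h_0$ as polynomials in $e_1\klk e_{d-r}$ and $a_{d-s}\klk a_{d-1}$. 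Substituting back into the remaining equations yields
\[
R_j\ =\ \pm\,e_{d-j}+\Phi_j(e_1\klk e_{d-j-1};\boldsymbol{a}),\qquad d-s\le j\le r-1,
\]
with $\deg_X R_j=d-j$. Thus the system $\{R_{d-s}=0\klk R_{r-1}=0\}$ is triangular in $e_{d-r+1}\klk e_s$, which can be eliminated to express the common zero locus in the coordinates $(e_1\klk e_r)$ as isomorphic to $\A^{d-s}$. Since the symmetrization map $(X_1\klk X_r)\mapsto(e_1\klk e_r)$ is finite of degree $r!$, the affine variety cut out by $R_{d-s}\klk R_{r-1}$ in $\A^r$ has pure dimension $d-s$; by Cohen--Macaulayness of $\fq[X_1\klk X_r]$ the $R_j$ form a regular sequence, giving an ideal--theoretic complete intersection. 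Applying the same analysis to the top--degree homogeneous parts of the $R_j$ (which, by a weighted homogeneity argument, are obtained by setting $\boldsymbol{a}=\boldsymbol{0}$) shows that $V^0\subset\Pp^{r-1}$ is a complete intersection of pure dimension $d-s-1$; hence the projective closure $V\subset\Pp^r$ is a complete intersection, and B\'ezout gives $\deg V=\deg V^0=\prod_{j=d-s}^{r-1}(d-j)=s!/(d-r)!$.

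The codimension--$3$ bound on the singular loci of $V$ and $V^0$ is what I expect to be the main obstacle. By the Jacobian criterion, $\mathrm{Sing}(V)$ is cut out in $V$ by the vanishing of the maximal minors of the $(r-d+s)\times r$ Jacobian $(\partial R_j/\partial X_k)$. Applying the chain rule $\partial R_j/\partial X_k=\sum_m(\partial R_j/\partial e_m)(\partial e_m/\partial X_k)$, this Jacobian factors as the product of $(\partial R_j/\partial e_m)$---which has full rank $r-d+s$ everywhere, thanks to the triangular structure---and $(\partial e_m/\partial X_k)$, whose determinant equals the Vandermonde $\prod_{i<j}(X_i-X_j)$ up to sign. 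The singular locus of $V$ is therefore contained in the ``coincidence locus'' $\bigcup_{i<j}\{X_i=X_j\}$, and $V^0$ admits an analogous description. The hard step is then to show that this intersection has codimension at least $3$ in $V$ (resp.\ $V^0$): my plan is to stratify the coincidence locus by the partition type of $(X_1\klk X_r)$ and carry out a dimension count on each stratum, using how the image of the restricted symmetric--function Jacobian interacts with the row space of $(\partial R_j/\partial e_m)$. This interplay is the delicate point I expect to consume the bulk of the proof.
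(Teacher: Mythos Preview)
The paper does not actually prove this theorem. In the paragraph introducing the average--value--set application the authors write that ``proofs will appear elsewhere,'' and Theorem~\ref{theorem: reduction value sets to counting} (together with Theorem~\ref{theorem: counting for value sets} and Corollary~\ref{coro: average value sets}) is then stated without any argument. There is therefore no proof in the paper against which to compare your proposal.

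For what it is worth, your construction is the natural one and is consistent with what one expects from the authors' deferred proof: taking the $R_j$ to be the coefficients of $T^{d-s},\ldots,T^{r-1}$ in the remainder of $f_{\boldsymbol{a}^s}$ upon Euclidean division by $\prod_{i=1}^r(T-X_i)$ does characterize allowability, the $R_j$ are visibly symmetric in $X_1,\ldots,X_r$, and the triangular shape $R_j=\pm e_{d-j}+\Phi_j(e_1,\ldots,e_{d-j-1})$ with $\deg R_j=d-j$ gives the complete--intersection and degree statements for both $V$ and $V^0$ along the lines you indicate. Your factorization of the Jacobian through the Vandermonde is also the right starting point for the singular--locus bound. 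The part you flag as delicate---the stratified dimension count on the coincidence locus---is genuinely where the work lies, and it is precisely here that the hypothesis $s\le d/2$ must enter; your outline does not yet show how that hypothesis is used, so that step remains to be filled in.
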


Combining Corollary \ref{coro: estimate codim 3} and Theorem
\ref{theorem: reduction value sets to counting} we obtain precise
information about the asymptotic behavior of $\chi(d,s,r)$ for
$d-s+1\le r\le d$, and thus of $N(d,s)$. More precisely, we have the
following result.
\begin{theorem}\label{theorem: counting for value sets}
Let assumptions and notations be as in Theorem \ref{theorem:
reduction value sets to counting} and set $D(s,d,r):=
\sum_{j=d-r+1}^{s}(j-1)$ and $\delta(s,d,r):=\prod_{j=d-r+1}^{s}j$.
We have the following estimate:
$$\left|\chi(d,s,r)-\frac{q^{d-s}}{r!}\right|\le \frac{15}{r!}
D(s,d,r)^3\delta(s,d,r)^2q^{d-s-1}.$$
\end{theorem}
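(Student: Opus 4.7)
The plan is to relate $\chi(d,s,r)$ to the number of $\fq$--rational points of the affine variety cut out in $\A^r$ by the polynomials $R_{d-s}\klk R_{r-1}$ of Theorem \ref{theorem: reduction value sets to counting}, apply Corollary \ref{coro: estimate codim 3} to its projective closure $V\subset\Pp^r$ and to the part at infinity $V^0\subset\Pp^{r-1}$, and control the contribution of tuples with coincident entries by an elementary B\'ezout--type argument. The first step is the combinatorial identity. By the first bullet of Theorem \ref{theorem: reduction value sets to counting}, an unordered set $\mathcal{X}_r=\{x_1\klk x_r\}\subset\fq$ of $r$ distinct elements is allowable for $f$ if and only if the tuple $(x_1\klk x_r)$ belongs to $\widetilde{V}(\fq)$, where $\widetilde{V}:=V\setminus V^0\subset\A^r$. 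Since each such subset corresponds to exactly $r!$ ordered tuples with pairwise distinct entries, we obtain
$$r!\,\chi(d,s,r) \ =\ |\widetilde{V}(\fq)|\ -\ |\Delta(\fq)|,$$
where $\Delta:=\widetilde{V}\cap\bigcup_{1\le i<j\le r}\{X_i=X_j\}$ collects the tuples of $\widetilde{V}$ with some repeated coordinate.

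The second step is to estimate $|\widetilde{V}(\fq)|$ via the decomposition $|\widetilde{V}(\fq)|=|V(\fq)|-|V^0(\fq)|$. Both $V$ and $V^0$ are, by Theorem \ref{theorem: reduction value sets to counting}, ideal--theoretic complete intersections over $\fq$ whose singular locus has codimension at least $3$, of the same degree $\delta(s,d,r)$ and (as the hyperplane section $V^0=V\cap\{X_0=0\}$ is cut by the leading parts of $R_{d-s}\klk R_{r-1}$) the same multidegree, so that $\sum(d_i-1)=D(s,d,r)$. Applying Corollary \ref{coro: estimate codim 3} to both, together with the identity $p_{d-s}-p_{d-s-1}=q^{d-s}$, yields
$$\bigl||\widetilde{V}(\fq)|-q^{d-s}\bigr|\ \le\ \bigl||V(\fq)|-p_{d-s}\bigr|+\bigl||V^0(\fq)|-p_{d-s-1}\bigr|\ \le\ 14\,D^3\delta^2\bigl(q^{d-s-1}+q^{d-s-2}\bigr),$$
where $D:=D(s,d,r)$ and $\delta:=\delta(s,d,r)$.

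The third step is to bound $|\Delta(\fq)|$. For each of the $\binom{r}{2}$ pairs $1\le i<j\le r$, the B\'ezout inequality (\ref{equation:Bezout}) applied to $V$ and the hyperplane $\{X_i=X_j\}$ shows that $V\cap\{X_i=X_j\}$ has dimension at most $d-s-1$ and degree at most $\delta$. The projective bound (\ref{eq: projective upper bound}) then gives $|V\cap\{X_i=X_j\}(\fq)|\le\delta\,p_{d-s-1}$, so that $|\Delta(\fq)|\le\binom{r}{2}\delta\,p_{d-s-1}\le r^2\delta\,q^{d-s-1}$. Combining this with the estimate of the second step and dividing by $r!$ produces a bound of the shape $c\,D^3\delta^2 q^{d-s-1}/r!$ for some absolute constant $c$; the small cases $d-s\le 3$, in which Corollary \ref{coro: estimate codim 3} does not apply directly to $V^0$ (or $V$), must be handled by the trivial affine bound $|\widetilde{V}(\fq)|\le\delta\,q^{d-s}$ and absorbed into the constant.

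The main obstacle is numerical: Corollary \ref{coro: estimate codim 3} applied to $V$ already contributes $14\,D^3\delta^2 q^{d-s-1}$, so the combined contribution from $V^0$, from the diagonal $\Delta$, and from the small--dimensional exceptional cases must be squeezed into an additional $D^3\delta^2 q^{d-s-1}$ in order to achieve the stated constant $15/r!$. This forces one to use explicitly that $\delta(s,d,r)\ge\binom{r}{2}$ and $D(s,d,r)\ge 1$ under the standing hypothesis $1\le s\le d/2$, and to collect the geometric--series tail $q^{d-s-2}+q^{d-s-3}+\cdots$ carefully rather than by the crude doubling used in the sketch above.
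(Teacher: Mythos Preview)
The paper does not actually prove this theorem: immediately before Theorem \ref{theorem: reduction value sets to counting} it says ``proofs will appear elsewhere'', and Theorem \ref{theorem: counting for value sets} is then stated as the outcome of ``combining Corollary \ref{coro: estimate codim 3} and Theorem \ref{theorem: reduction value sets to counting}'' with no further argument. Your outline---writing $r!\,\chi(d,s,r)=|\widetilde V(\fq)|-|\Delta(\fq)|$, expressing $|\widetilde V(\fq)|=|V(\fq)|-|V^0(\fq)|$, and applying Corollary \ref{coro: estimate codim 3} to $V$ and $V^0$---is exactly the natural way to carry out that combination, so at the level of strategy your proposal matches the paper's sketch.

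Your diagnosis of the numerical obstacle is correct, and in fact sharper than you indicate. The two applications of Corollary \ref{coro: estimate codim 3} already yield $14D^3\delta^2(q^{d-s-1}+q^{d-s-2})$, and the diagonal term contributes on the order of $\binom{r}{2}\delta\, q^{d-s-1}$. Your proposed rescue, namely $\delta(s,d,r)\ge\binom{r}{2}$, is false: take $s=2$, $r=d-1$, so that $D=1$, $\delta=2$, $D^3\delta^2=4$, while $\binom{r}{2}=\binom{d-1}{2}$ grows without bound in $d$. In that regime the diagonal contribution $\binom{d-1}{2}\cdot 2\,q^{d-3}$ alone exceeds $15D^3\delta^2q^{d-3}=60\,q^{d-3}$ as soon as $d\ge 10$. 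Hence the constant $15/r!$ is not reachable from the ingredients in this paper by the decomposition you wrote down; one needs either additional structural information about the polynomials $R_j$ on the diagonals, or a direct estimate for the distinct--coordinate locus that does not pass through the crude union bound on $\Delta$. Since the paper defers all of this to a companion article, there is nothing further here to compare against; your skeleton is the intended one, and the gap you flag is a genuine feature of the deferred argument rather than an error in your reasoning.
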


From Theorem \ref{theorem: counting for value sets} we obtain the
following result concerning the behavior of $N(d,s)$.
\begin{corollary}\label{coro: average value sets}
With assumptions and notations as in Theorems \ref{theorem:
reduction value sets to counting} and \ref{theorem: counting for
value sets}, we have
$$|N(d,s)-\mu_d\, q|\le E(s,d):=\frac{e^{-1}}{2}+16\sum_{r=d-s+1}^{d}\frac{
D(s,d,r)^3\delta(s,d,r)^2}{r!}+\frac{2d}{q}.$$
A rough upper bound for the sum in the right--hand side of the
previous expression is $10d^{\,7}2^{-d}e^{2\sqrt{d}}$, which is
easily seen to tend to 0 as $d$ tends to infinity.
\end{corollary}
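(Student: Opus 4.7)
The plan is to combine the exact identity (\ref{eq: our formula for value sets}) with Theorem \ref{theorem: counting for value sets}. I would write
$$\mu_d q=\sum_{r=1}^{d-s}\frac{(-1)^{r-1}q}{r!}+\sum_{r=d-s+1}^{d}\frac{(-1)^{r-1}q}{r!}$$
and subtract from (\ref{eq: our formula for value sets}). This expresses $N(d,s)-\mu_d q$ as the sum of a ``head'' piece (over $1\le r\le d-s$, comparing $\binom{q}{r}q^{1-r}$ with $q/r!$) and a ``tail'' piece (over $d-s+1\le r\le d$, comparing $q^{-(d-s-1)}\chi(d,s,r)$ with $q/r!$). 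The triangle inequality then reduces the corollary to bounding these two pieces separately.

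The tail piece is immediate from Theorem \ref{theorem: counting for value sets}: dividing the estimate of $\chi(d,s,r)$ by $q^{d-s-1}$ yields
$$\left|\frac{\chi(d,s,r)}{q^{d-s-1}}-\frac{q}{r!}\right|\le\frac{15\,D(s,d,r)^3\delta(s,d,r)^2}{r!},$$
so that summing over $r$ from $d-s+1$ to $d$ produces the central term of $E(s,d)$. The extra factor $16$ (rather than $15$) is there to absorb a small overflow coming from the head piece.

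For the head piece I would expand
$$\binom{q}{r}q^{1-r}=\frac{q}{r!}\prod_{j=1}^{r-1}\!\left(1-\frac{j}{q}\right)=\frac{q}{r!}-\frac{1}{2(r-2)!}+\mathcal{O}_{r}(1/q),$$
obtained from the Stirling-number expansion $q^{\underline{r}}=q^{r}-\binom{r}{2}q^{r-1}+\cdots$ of the falling factorial. The leading correction, summed with alternating signs in $r$, yields $\tfrac{1}{2}\sum_{k=0}^{d-s-2}(-1)^{k}/k!$, whose absolute value is bounded by $e^{-1}/2$ via the standard tail estimate for the alternating series representing $e^{-1}$; the residual $\mathcal{O}_{r}(1/q)$ terms, each of size $\mathcal{O}(r^{2}/(q\cdot r!))$, contribute at most $2d/q$ after summation. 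Combining with the tail piece yields the main inequality $|N(d,s)-\mu_d q|\le E(s,d)$.

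For the final ``rough'' upper bound $10\,d^{\,7}2^{-d}e^{2\sqrt{d}}$ on the central sum, I would use $D(s,d,r)\le s(s-1)/2\le d^{2}/8$ and $\delta(s,d,r)=s!/(d-r)!$, then change variables $k=d-r$ to rewrite the sum as a multiple of $\sum_{k=0}^{s-1}(s!)^{2}/((k!)^{2}(d-k)!)$, and estimate it via Stirling's formula together with the hypothesis $s\le d/2$. I expect this last quantitative step to be the main obstacle: checking that all polynomial prefactors fit into $d^{\,7}2^{-d}e^{2\sqrt{d}}$ requires carefully balancing a sub-exponential Stirling estimate for the combinatorial coefficient against the exponential decay coming from $(s!)^{2}/(d!)$, with the concentration of the sum near its maximal term being the key observation. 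The rest of the argument is essentially bookkeeping on top of Theorem \ref{theorem: counting for value sets}.
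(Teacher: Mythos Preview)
The paper does not include a proof of this corollary: the authors explicitly say ``proofs will appear elsewhere'' for all the results in this subsection, and the corollary is simply stated as a consequence of Theorem~\ref{theorem: counting for value sets}. So there is no argument in the paper to compare against. Your strategy---subtracting $\mu_d q$ from identity~(\ref{eq: our formula for value sets}), handling the tail range $d-s+1\le r\le d$ directly by Theorem~\ref{theorem: counting for value sets}, and handling the head range by the falling-factorial expansion of $\binom{q}{r}q^{1-r}$---is the natural derivation and is almost certainly what the authors have in mind.

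One detail in your head-piece analysis does not hold as stated. You claim that
\[
\frac{1}{2}\,\Bigl|\sum_{k=0}^{d-s-2}\frac{(-1)^{k}}{k!}\Bigr|\le \frac{e^{-1}}{2},
\]
appealing to the alternating-series tail estimate. But that estimate bounds the \emph{distance from} $e^{-1}$, not the partial sum itself; for $d-s=2$ (which is allowed, e.g.\ $s=1,\,d=3$) the partial sum equals $1$, giving $1/2>e^{-1}/2$. The partial sums actually lie in $[0,1]$ and converge to $e^{-1}$, so the constant $e^{-1}/2$ in $E(s,d)$ is presumably the asymptotic value, with the small-index discrepancy absorbed into the slack between the $15$ of Theorem~\ref{theorem: counting for value sets} and the $16$ in $E(s,d)$ (or into the $2d/q$ term). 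You should make that absorption explicit rather than assert the pointwise bound. Apart from this bookkeeping issue, your outline is sound.
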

Corollary \ref{coro: average value sets} strengthens (\ref{eq:
average value set}) in several aspects. The first one is that our
result holds without any restriction on the characteristic $p$ of
$\fq$, while (\ref{eq: average value set}) holds for $p>d$. The
second aspect is that Corollary \ref{coro: average value sets} shows
that $N(d,s)=\mu_d\, q+\mathcal{O}(1)$, while (\ref{eq: average
value set}) only asserts that $N(d,s)=\mu_d\,
q+\mathcal{O}(q^{1/2})$. Finally, we obtain an explicit expression
for the constant underlying the $\mathcal{O}$--notation with a good
behavior, while (\ref{eq: average value set}) does not provide an
explicit expression for the corresponding constant. On the other
hand, it must be said that our result holds for $s\le d/2$, while
(\ref{eq: average value set}) holds without restrictions on $s$.
%
%----------------------------------------------------------------
%----------------------------------------------------------------
%----------------------------------------------------------------
%----------------------------------------------------------------
%----------------------------------------------------------------
%----------------------------------------------------------------
%----------------------------------------------------------------
%----------------------------------------------------------------
%

%\bibliographystyle{amsplain}
%\bibliography{finite_fields,Newref,refs1}

\begin{thebibliography}{10}

\bibitem{Ballico03}
E.~Ballico, \emph{An effective {Bertini} theorem over finite
fields}, Advances
  in Geometry \textbf{3} (2003), 361--363.

\bibitem{BaGiHeSaSc10}
B.~Bank, M.~Giusti, J.~Heintz, M.~Safey~El Din, and E.~Schost,
\emph{On the
  geometry of polar varieties}, Appl. Algebra Engrg. Comm. Comput. \textbf{21}
  (2010), no.~1, 33--83.

\bibitem{BaGiHeLePa12}
B.~Bank, M.~Giusti, J.~Heintz, L.~Lehmann, and L.M. Pardo,
\emph{Algorithms of
  intrinsic complexity for point searching in compact real singular
  hypersurfaces}, Found. Comput. Math. \textbf{12} (2012), no.~1, 75--122.

\bibitem{BaGiHeMb97}
B.~Bank, M.~Giusti, J.~Heintz, and G.M. Mbakop, \emph{Polar
varieties and
  efficient real equation solving: The hypersurface case}, J. Complexity
  \textbf{13} (1997), no.~1, 5--27.

\bibitem{BaGiHeMb01}
\bysame, \emph{Polar varieties and efficient real elimination},
Math. Z.
  \textbf{238} (2001), no.~1, 115--144.

\bibitem{BaGiHePa05}
B.~Bank, M.~Giusti, J.~Heintz, and L.M. Pardo, \emph{Generalized
polar
  varieties: {Geometry} and algorithms}, J. Complexity \textbf{21} (2005),
  no.~4, 377--412.

\bibitem{BiSD59}
B.~Birch and H.~{Swinnerton-Dyer}, \emph{Note on a problem of
{Chowla}}, Acta
  Arith. \textbf{5} (1959), no.~4, 417--423.

\bibitem{BrVe88}
W.~Bruns and U.~Vetter, \emph{Determinantal rings}, Lecture Notes in
Math.,
  vol. 1327, Springer, Berlin Heidelberg New York, 1988.

\bibitem{CaMa07}
A.~Cafure and G.~Matera, \emph{An effective {Bertini} theorem and
the number of
  rational points of a normal complete intersection over a finite field}, Acta
  Arith. \textbf{130} (2007), no.~1, 19--35.

\bibitem{Cohen72}
S.~Cohen, \emph{Uniform distribution of polynomials over finite
fields}, J.
  Lond. Math. Soc. (2) \textbf{6} (1972), no.~1, 93--102.

\bibitem{Cohen73}
\bysame, \emph{The values of a polynomial over a finite field},
Glasg. Math. J.
  \textbf{14} (1973), no.~2, 205--208.

\bibitem{DaKrSo13}
C.~{D'Andrea}, T.~Krick, and M.~Sombra, \emph{Heights of varieties
in
  multiprojective spaces and arithmetic {Nullstellens\"atze}}, Ann. Sci. \'Ec.
  Norm. Sup\'er. (4) \textbf{46} (2013), no.~4, 571--649.

\bibitem{Deligne74}
P.~Deligne, \emph{La conjecture de {Weil}. {I}}, Inst. Hautes
\'Etudes Sci.
  Publ. Math. \textbf{43} (1974), 273--307.

\bibitem{EdLiXi09}
F.~Edoukou, S.~Ling, and C.~Xing, \emph{Intersection of two quadrics
with no
  common hyperplane in $\mathbb{P}^n(\mathbb{F}_{\hskip-0.7mm q})$}, Preprint
  {\tt arXiv:0907.4556v1 [math.CO]}, 2009.

\bibitem{Fulton84}
W.~Fulton, \emph{Intersection theory}, Springer, Berlin Heidelberg
New York,
  1984.

\bibitem{GhLa02a}
S.~Ghorpade and G.~Lachaud, \emph{{\'Etale} cohomology, {Lefschetz}
theorems
  and number of points of singular varieties over finite fields}, Mosc. Math.
  J. \textbf{2} (2002), no.~3, 589--631.

\bibitem{GhLa02}
\bysame, \emph{Number of solutions of equations over finite fields
and a
  conjecture of {Lang} and {Weil}}, Number Theory and Discrete Mathematics
  (Chandigarh, 2000) (New Delhi) (A.K.~Agarwal et~al., ed.), Hindustan Book
  Agency, 2002, pp.~269--291.

\bibitem{Harris92}
J.~Harris, \emph{Algebraic geometry: a first course}, Grad. Texts in
Math.,
  vol. 133, Springer, New York Berlin Heidelberg, 1992.

\bibitem{Heintz83}
J.~Heintz, \emph{{Definability} and fast quantifier elimination in
  algebraically closed fields}, Theoret. Comput. Sci. \textbf{24} (1983),
  no.~3, 239--277.

\bibitem{HoPe68a}
W.~Hodge and D.~Pedoe, \emph{Methods of algebraic geometry. {Vol.}
{I}},
  Cambridge Math. Lib., Cambridge Univ. Press, Cambridge, 1968.

\bibitem{HoPe68b}
\bysame, \emph{Methods of algebraic geometry. {Vol.} {II}},
Cambridge Math.
  Lib., Cambridge Univ. Press, Cambridge, 1968.

\bibitem{Hooley91}
C.~Hooley, \emph{On the number of points on a complete intersection
over a
  finite field}, J. Number Theory \textbf{38} (1991), no.~3, 338--358.

\bibitem{Kleiman74}
S.~Kleiman, \emph{The transversality of a general translate},
Compos. Math.
  \textbf{28} (1974), no.~2, 287--297.

\bibitem{Kleiman76}
\bysame, \emph{The enumerative theory of singularities}, Real and
Complex
  Singularities, Oslo 1976: Proceedings of the 9th Nordic Summer School/NAVF
  Symposium in Mathematics, Oslo, Aug. 5--25, 1976 (P.~Holm, ed.), Sijthoff \&
  Noordhoff, 1976, pp.~297--396.

\bibitem{Kunz85}
E.~Kunz, \emph{Introduction to commutative algebra and algebraic
geometry},
  Birkh{\"a}user, Boston, 1985.

\bibitem{LeSc73}
D.~Lewis and S.~Schuur, \emph{Varieties of small degree over finite
fields}, J.
  Reine Angew. Math. \textbf{262/263} (1973), 293–--306.

\bibitem{LiNi83}
R.~Lidl and H.~Niederreiter, \emph{Finite fields}, Addison--Wesley,
Reading,
  Massachusetts, 1983.

\bibitem{Piene78}
R.~Piene, \emph{Polar classes of singular varieties}, Ann. Scient.
\'Ec. Norm.
  Sup. S\'er. 4 \textbf{11} (1978), no.~2, 247--276.

\bibitem{Samuel67}
P.~Samuel, \emph{M\'ethodes d'alg\`ebre abstraite en g\'eom\'etrie
  alg\'ebrique}, Springer, Berlin Heidelberg New York, 1967.

\bibitem{Schmidt76}
W.~Schmidt, \emph{Equations over finite fields. {An} elementary
approach},
  Lectures Notes in Math., no. 536, Springer, New York, 1976.

\bibitem{Serre91}
J-P. Serre, \emph{Lettre \`a {M}. {T}sfasman}, Ast\'erisque
\textbf{198-200}
  (1991), 351--353.

\bibitem{Shafarevich94}
I.R. Shafarevich, \emph{Basic algebraic geometry: {Varieties} in
projective
  space}, Springer, Berlin Heidelberg New York, 1994.

\bibitem{Teissier82}
B.~Teissier, \emph{Vari\'et\'es polaires. {II}: {Multiplicit\'es}
polaires,
  sections planes et conditions de {Whitney}}, Algebraic geometry, Proc. Int.
  Conf., La R\'abida/Spain 1981 (Berlin Heidelberg New York) (J.~Aroca,
  R.~Buchweitz, M.~Giusti, and M.~Merle, eds.), Lect. Notes Math., vol. 961,
  Springer, 1982, pp.~314--491.

\bibitem{Teissier88}
\bysame, \emph{Quelques points de l'histoire des vari\'et\'es
polaires, de
  {Poncelet} \`a nos jours}, S\'eminaire d'analyse: 1987--1988
  (Clermont--Ferrand), vol.~4, Univ. Blaise--Pascal, 1988.

\bibitem{Uchiyama55b}
S.~Uchiyama, \emph{Note on the mean value of {$V(f)$}. {II}}, Proc.
Japan Acad.
  \textbf{31} (1955), no.~6, 321--323.

\bibitem{Vogel84}
W.~Vogel, \emph{Results on {B\'ezout}'s theorem}, Tata Inst. Fundam.
Res. Lect.
  Math., vol.~74, Tata Inst. Fund. Res., Bombay, 1984.

\bibitem{Wooley08}
T.~Wooley, \emph{Artin's {Conjecture} for septic and unidecic
forms}, Acta
  Arith. \textbf{133} (2008), no.~1, 25--35.

\bibitem{Zahid10}
J.~Zahid, \emph{Nonsingular points on hypersurfaces over $\fq$}, J.
Math. Sci.
  (N. Y.) \textbf{171} (2010), no.~6, 731--735.

\end{thebibliography}
\providecommand{\bysame}{\leavevmode\hbox
to3em{\hrulefill}\thinspace}
\providecommand{\MR}{\relax\ifhmode\unskip\space\fi MR }
% \MRhref is called by the amsart/book/proc definition of \MR.
\providecommand{\MRhref}[2]{%
  \href{http://www.ams.org/mathscinet-getitem?mr=#1}{#2}
} \providecommand{\href}[2]{#2}

\end{document}